\newtheorem{thm}{Theorem}[section]
\newtheorem{cor}[thm]{Corollary}
\newtheorem{lem}[thm]{Lemma}
\newtheorem{sublem}[thm]{Sub-Lemma}
\newtheorem{prop}[thm]{Proposition}
\theoremstyle{definition}
\newtheorem{dfn}[thm]{Definition}
\newtheorem*{dfn*}{Definition}
\newtheorem{nota}[thm]{Notation}
\newtheorem{eg}[thm]{Example}
\theoremstyle{remark}
\newtheorem{rmk}[thm]{Remark}
\numberwithin{equation}{subsection}
\newcommand{\Nbb}{\mathbb{N}} 
\newcommand{\Zbb}{\mathbb{Z}} 
\newcommand{\Acal}{\mathcal{A}}
\newcommand{\Bcal}{\mathcal{B}}
\newcommand{\Ccal}{\mathcal{C}}
\newcommand{\Hcal}{\mathcal{H}}
\newcommand{\Mcal}{\mathcal{M}}
\newcommand{\Ncal}{\mathcal{N}}
\newcommand{\Ocal}{\mathcal{O}}
\newcommand{\Scal}{\mathcal{S}}
\newcommand{\Tcal}{\mathcal{T}}
\newcommand{\Lbf}{\mathbf{L}}
\newcommand{\Rbf}{\mathbf{R}}
\newcommand{\torfunct}[1]{\Gamma_{\mathfrak{m}_{#1}}}
\newcommand{\Rtorfunct}[1]{\R\Gamma_{\mathfrak{m}_{#1}}}
\newcommand{\dgcatname}[1]{\mathcal{#1}}
\newcommand{\dgfunname}[1]{#1}
\newcommand{\dgmodname}[1]{#1}
\newcommand{\abcatname}[1]{\mathcal{#1}}
\newcommand{\abobjname}[1]{\mathcal{#1}}
\newcommand{\qgrobjname}[1]{\mathcal{#1}}
\newcommand{\RHom}{\R\Hom}
\newcommand{\RHomint}{\R\underline{\Hom}}
\newcommand{\qV}{qV}
\newcommand{\R}{\Rbf}
\newcommand{\LD}{\Lbf}
\newcommand{\Q}{Q}
\newcommand{\U}{U}
\newcommand{\RQ}{\R Q}
\newcommand{\RprimeQ}{\R^{\prime}Q}
\newcommand{\Romega}{\R\omega}
\newcommand{\Rtau}{\R\tau}
\newcommand{\Rprimetau}{\R^{\prime}\tau}
\newcommand{\canring}{R_{\text{can}}}
\newcommand{\anticanring}{R_{\text{anti-can}}}
\newcommand{\canmod}{\mathcal{K}}
\newcommand{\Cdg}{C_{\dg}}
\newcommand{\Ddg}{D_{\dg}}
\DeclareMathOperator{\Hom}{Hom}
\DeclareMathOperator{\Ker}{Ker}
\DeclareMathOperator{\Cok}{Cok}
\DeclareMathOperator{\Ext}{Ext}
\DeclareMathOperator{\Cone}{Cone}
\DeclareMathOperator{\Homint}{\underline{Hom}}
\DeclareMathOperator{\Extint}{\underline{Ext}}
\DeclareMathOperator{\Proj}{Proj}
\DeclareMathOperator{\Coh}{Coh}
\DeclareMathOperator{\Perf}{Perf}
\DeclareMathOperator{\Mod}{Mod}
\DeclareMathOperator{\QMod}{QMod}
\DeclareMathOperator{\Gr}{Gr}
\DeclareMathOperator{\gr}{gr}
\DeclareMathOperator{\Tor}{Tor}
\DeclareMathOperator{\tor}{tor}
\DeclareMathOperator{\QGr}{QGr}
\DeclareMathOperator{\qgr}{qgr}
\DeclareMathOperator{\BiGr}{BiGr}
\DeclareMathOperator{\QBiGr}{QBiGr}
\DeclareMathOperator{\Locfree}{loc}
\DeclareMathOperator{\bgm}{bi}
\DeclareMathOperator{\serrefunct}{\Scal}
\DeclareMathOperator{\gldim}{gl.dim}
\DeclareMathOperator{\injdim}{inj.dim}
\DeclareMathOperator{\op}{op}
\DeclareMathOperator{\en}{en}
\DeclareMathOperator{\Ob}{Ob}
\DeclareMathOperator{\dg}{dg}
\DeclareMathOperator{\dgcat}{dgCat}
\DeclareMathOperator{\dgmod}{dgMod}
\DeclareMathOperator{\Acydg}{Acy_{dg}}
\DeclareMathOperator{\yoneda}{Y}
\DeclareMathOperator{\sfmod}{SF}
\DeclareMathOperator{\Perfdg}{Perf_{dg}}
\DeclareMathOperator{\hproj}{h-proj}
\DeclareMathOperator{\Fundg}{Fun_{dg}}
\DeclareMathOperator{\idfun}{Id}
\title[NC Bondal-Orlov's reconstruction theorem]{Bondal-Orlov's reconstruction theorem in noncommutative projective geometry}
\author{Yuki Mizuno}
\email{mizuno.y@aoni.waseda.jp, m7d5932a72xxgxo@fuji.waseda.jp}
\date{}
\address{Department~of~Mathematics, School~of~Science~and~Engineering, Waseda~University, Ohkubo~3-4-1, Shinjuku, Tokyo~169-8555, Japan}
\keywords{Noncommutative algebraic geometry, Noncommutative projective geometry, Derived category, Dg category}
\subjclass[2020]{14A22, 14F08}
\begin{document}

\begin{abstract}
    We show that Bondal-Orlov's reconstruction theorem holds in noncommutative projective geometry. 
    We also prove that fully faithful exact functors between derived categories of noncommutative projective schemes are of Fourier-Mukai type.
\end{abstract}

\maketitle

\section{Introduction}
Whether a scheme is determined by its (derived) category of (quasi-)coherent sheaves is a fundamental problem in algebraic geometry.
The first result in this direction is obtained by Gabriel.
\begin{thm}[{\cite{gabriel1962categories}}]
    Let $X, Y$ be noetherian schemes.
    \[
    \Coh(X) \simeq \Coh(Y) \Rightarrow X \simeq Y.
    \]
\end{thm}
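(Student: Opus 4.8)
The statement is Gabriel's reconstruction theorem, and the plan is to recover $X$ as a ringed space---hence as a scheme---directly from its abelian category of quasi-coherent sheaves. Since $X$ is noetherian, $\Coh(X)$ is a noetherian abelian category, it is the full subcategory of noetherian objects of $\QCoh(X)$, and $\QCoh(X)\simeq\mathrm{Ind}(\Coh(X))$; therefore an equivalence $\Coh(X)\simeq\Coh(Y)$ canonically induces an equivalence $\mathcal{A}:=\QCoh(X)\simeq\QCoh(Y)=:\mathcal{B}$, and it is enough to reconstruct the scheme $X$ from $\mathcal{A}$ by a procedure natural in $\mathcal{A}$.

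First I would recover the underlying set. On a locally noetherian scheme every injective quasi-coherent sheaf decomposes as a direct sum of indecomposable injectives, and---by the sheaf-theoretic form of Matlis theory---the indecomposable injective objects of $\QCoh(X)$ are, up to isomorphism, exactly the sheaves $\mathcal{J}(x)=\iota_{x*}E(k(x))$, where $x$ runs over $X$, $\iota_x\colon\Spec\mathcal{O}_{X,x}\hookrightarrow X$, and $E(k(x))$ is the injective hull of the residue field over the local ring. Thus $x\mapsto[\mathcal{J}(x)]$ identifies $X$ with the set $\mathrm{Sp}(\mathcal{A})$ of isomorphism classes of indecomposable injectives, an invariant of $\mathcal{A}$. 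Next I would recover the topology. For $Z\subseteq X$ closed let $\mathcal{S}_Z\subseteq\Coh(X)$ be the Serre subcategory of coherent sheaves supported on $Z$; one checks that $\mathcal{S}_Z$ is generated, as a Serre subcategory, by a single object (for instance the structure sheaf of the reduced closed subscheme supported on $Z$), that conversely the Serre subcategory generated by a single coherent sheaf $F$ equals $\mathcal{S}_{\mathrm{supp}(F)}$, and that $Z\mapsto\mathcal{S}_Z$ is an inclusion-preserving bijection from closed subsets of $X$ onto Serre subcategories of $\Coh(X)$ generated by one object. Since a point $x$ lies in the open set $X\setminus Z$ if and only if $\overline{\{x\}}\cap(X\setminus Z)\neq\emptyset$, i.e.\ if and only if $\mathcal{J}(x)$ is not contained in the localizing subcategory $\langle\mathcal{S}_Z\rangle$ of $\QCoh(X)$ generated by $\mathcal{S}_Z$ (equivalently $\mathcal{J}(x)$ has nonzero image in the Gabriel quotient $\mathcal{A}/\langle\mathcal{S}_Z\rangle$), the topology of $X$ on the point set $\mathrm{Sp}(\mathcal{A})$ is recovered from $\mathcal{A}$ as well.

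It remains to reconstruct the structure sheaf. For an open $U\subseteq X$ with closed complement $Z$, the localization functor identifies $\QCoh(U)$ with the Gabriel quotient $\mathcal{A}/\langle\mathcal{S}_Z\rangle$, which has already been produced from $\mathcal{A}$; moreover $\Gamma(U,\mathcal{O}_X)\cong\End\!\big(\mathrm{Id}_{\QCoh(U)}\big)$, the ring of natural endomorphisms of the identity functor---such a transformation is determined by its component on the structure sheaf, which is multiplication by a uniquely determined global section---and these isomorphisms are compatible with restriction. Hence $U\mapsto\End\!\big(\mathrm{Id}_{\mathcal{A}/\langle\mathcal{S}_{X\setminus U}\rangle}\big)$ is a sheaf of rings on $\mathrm{Sp}(\mathcal{A})$ isomorphic to $\mathcal{O}_X$, and the ringed space $\big(\mathrm{Sp}(\mathcal{A}),\,U\mapsto\End(\mathrm{Id}_{\mathcal{A}/\langle\mathcal{S}_{X\setminus U}\rangle})\big)$ is the scheme $X$. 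Every step having been phrased intrinsically, the equivalence $\mathcal{A}\simeq\mathcal{B}$ transports this construction, yielding an isomorphism of ringed spaces $X\simeq Y$.

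The main obstacle is the material of the middle paragraph: establishing the bijection between points and indecomposable injectives together with the computation $\mathrm{supp}(\mathcal{J}(x))=\overline{\{x\}}$ (this rests on the structure theory of injective modules over noetherian local rings and its globalization), and verifying that closed subsets correspond precisely to the Serre subcategories of $\Coh(X)$ generated by a single object. One must also be attentive to naturality throughout, so that an equivalence of abelian categories---a priori only determined up to isomorphism---produces a genuine isomorphism of schemes rather than merely an isomorphism of their point sets.
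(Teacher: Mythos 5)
The paper does not prove this statement; it is Gabriel's classical theorem, quoted in the introduction with only a citation to \cite{gabriel1962categories}. Your sketch is precisely the standard Gabriel reconstruction argument (points from indecomposable injectives, topology from Serre subcategories generated by one object, structure sheaf from the centers $\End(\Id)$ of the Gabriel quotients $\QCoh(X)/\QCoh_Z(X)\simeq\QCoh(U)$), and it is correct in outline, with the genuinely technical ingredients --- the Matlis-type classification of indecomposable injectives, $\mathrm{supp}(\mathcal{J}(x))=\overline{\{x\}}$, and naturality of the whole construction --- appropriately identified rather than elided.
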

This result means that the category of coherent sheaves on a noetherian scheme determines the scheme.
Generalizations of Gabriel's theorem to more general settings have been studied by many authors (\cite{rosenberg1998spectrum}, \cite{brandenberg2018rosenberg}, \cite{antieau2016reconstruction}, \cite{calabrase2015moduli} and \cite{perego2009gabriel}).

In contrast to the case of the category of coherent sheaves, the bounded derived category of coherent sheaves on a noetherian scheme does not determine the scheme.
Mukai first discovered this fact (\cite{mukai1981duality}).
He proved that the bounded derived category of coherent sheaves on an abelian variety is equivalent to that on the dual abelian variety.
On the other hand, Bondal and Orlov showed that smooth projective varieties with (anti-)ample canonical bundles are determined by their bounded derived categories of coherent sheaves.

\begin{thm}[{\cite[Theorem 2.5]{Bondal2001reconstruction}}]
\label{thm:BO-reconstruction}
    Let $X, Y$ be smooth projective varieties over a field $k$.
    If the canonical bundles $\canmod_X, \canmod_Y$ are (anti-)ample, then
    \[
    D^b(\Coh(X)) \simeq D^b(\Coh(Y)) \Rightarrow X \simeq Y.
    \]
    
\end{thm}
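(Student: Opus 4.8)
The plan is to recover $X$ from $D^b(\Coh(X))$ using only intrinsic triangulated-categorical data, so that any exact equivalence $\Phi\colon D^b(\Coh(X))\xrightarrow{\ \sim\ }D^b(\Coh(Y))$ transports that data and forces $X\cong Y$. The one ingredient I would take from geometry is Serre duality: for $X$ smooth projective, $D^b(\Coh(X))$ has a Serre functor $S_X=(-)\otimes\canmod_X[\dim X]$, a Serre functor is unique up to isomorphism, and hence every exact equivalence commutes with Serre functors ($\Phi\circ S_X\cong S_Y\circ\Phi$). Write $n$ for $\dim X$; intrinsicality of $n$ will fall out below. I also use that $X$ smooth makes every object of $D^b(\Coh(X))$ perfect.

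First I would characterize the \emph{point objects}: call $P\in D^b(\Coh(X))$ a point object of codimension $s$ if $S_X(P)\cong P[s]$, $\Hom(P,P[i])=0$ for $i<0$, and $\Hom(P,P)$ is a field. The key lemma, and the step where (anti-)ampleness of $\canmod_X$ is indispensable, is that the point objects are exactly the shifted skyscrapers $k(x)[m]$ ($x\in X$ closed, $m\in\Zbb$), all of codimension $s=n$. To prove it, take a point object $P$ and compare cohomology sheaves in $S_X(P)\cong P[s]$: this gives $\Hcal^{j}(P)\cong\Hcal^{j+(n-s)}(P)\otimes\canmod_X$ for all $j$, hence $\Hcal^{j}(P)\cong\Hcal^{j+t(n-s)}(P)\otimes\canmod_X^{\otimes t}$ for every $t\in\Zbb$. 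Boundedness of $P$ forces $s=n$; then each nonzero $\Hcal^{j}(P)=:\mathcal F$ satisfies $\mathcal F\cong\mathcal F\otimes\canmod_X^{\otimes t}$ for all $t$, and if $\operatorname{Supp}\mathcal F$ had positive dimension, then taking $t\gg0$ (resp. $t\ll0$) and using ampleness (resp. anti-ampleness) of $\canmod_X$ makes $\chi(\mathcal F\otimes\canmod_X^{\otimes t})$ grow, contradicting $\mathcal F\otimes\canmod_X^{\otimes t}\cong\mathcal F$. So $P$ is supported on finitely many closed points, and a standard local computation with $\Ext^{<0}(P,P)=0$ and $\Hom(P,P)$ a field then pins $P$ down to a single $k(x)[m]$. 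I expect this classification to be the main obstacle.

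Next I would characterize the \emph{invertible objects}: call $L$ invertible if for every point object $P$ there is an integer $s$ with $\Hom(L,P[s])\cong\Hom(P,P)$ and $\Hom(L,P[i])=0$ for $i\neq s$. This is phrased with the already-intrinsic notion of point object; by the previous classification it amounts to the same condition with $P=k(x)$ for all closed $x$, i.e. $R\Hom(L,k(x))$ is concentrated in a single degree and one-dimensional there for every $x$. For a perfect complex this forces $L$ to be a shift $\Lcal[m]$ of a line bundle $\Lcal$ on $X$.

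Finally, the reconstruction. Assume $\canmod_X$ is ample (the anti-ample case is symmetric with $S_X$ replaced by $S_X^{-1}$, and the construction below detects which case one is in, so $\Phi$ transports it correctly). Pick any point object $P_0$ and any invertible object $L_0$ and set $L_i:=S_X^{\,i}(L_0)[-in]\cong L_0\otimes\canmod_X^{\otimes i}$ for $i\ge0$; since $\canmod_X$ is ample, $(L_i)_{i\ge0}$ is an ample sequence of line bundles, and the whole construction is intrinsic because $S_X$ and $n$ are. Then
\[
B\ :=\ \bigoplus_{i\ge0}\Hom_{D^b(\Coh(X))}(L_0,L_i)\ \cong\ \bigoplus_{i\ge0}\coho^0\!\big(X,\canmod_X^{\otimes i}\big)
\]
is the canonical ring of $X$ (the twist by $L_0$ and any shift cancel, so $B$ does not depend on the choice of $L_0$), and ampleness of $\canmod_X$ gives $X\cong\Proj B$ as schemes, with the closed points recovered as the point objects up to shift and the structure sheaf computations from the composition maps $\Hom(L_0,L_i)\otimes\Hom(L_i,P)\to\Hom(L_0,P)$. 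Now an exact equivalence $\Phi$ commutes with the (unique) Serre functors, hence carries point objects to point objects, invertible objects to invertible objects, preserves codimension (so $\dim X=\dim Y$), and sends $L_i$ to $S_Y^{\,i}(\Phi L_0)[-in]$, again a shift of an ample sequence of line bundles on $Y$; full faithfulness then makes the associated graded rings isomorphic, so applying $\Proj$ yields $X\cong Y$.
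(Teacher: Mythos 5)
Your argument is, in substance, the original Bondal--Orlov proof, and as a sketch it is correct: the classification of point objects as shifted skyscrapers via $S_X(P)\cong P[s]$, boundedness forcing $s=\dim X$, the Hilbert-polynomial/Euler-characteristic argument using (anti-)ampleness of $\canmod_X$ to kill positive-dimensional support, the identification of invertible objects with shifted line bundles, and the recovery of $X$ as $\Proj$ of the (anti-)canonical ring are exactly the steps of \cite[Theorem 2.5]{Bondal2001reconstruction}. Note, however, that the paper does not prove this statement at all --- it is quoted as a known theorem --- and when the paper proves its noncommutative generalization (\cref{thm:re-main1}) it deliberately takes a different route, precisely because the point-object/invertible-object classification breaks down for $\qgr(A)$ (simple objects there are harder to control than skyscrapers). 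Instead, the paper first shows every equivalence is of Fourier--Mukai type with a unique kernel (\cref{thm:re-main2}), uses kernel calculus to prove that the global dimensions agree (\cref{prop:D-eq and dimension}) and that the equivalence intertwines the Serre kernels, deduces an isomorphism of (anti-)canonical graded algebras $\canring(\qgr(A))\simeq\canring(\qgr(B))$ (\cref{prop:re-main1}), and then invokes Artin--Zhang's characterization (\cref{thm:AZ-characterization}) in place of $\Proj$ of the canonical ring. The two approaches converge at the last step (recovering the space from the canonical ring under an ampleness hypothesis), but the paper's method replaces your object-by-object classification with kernel uniqueness, which is what makes it generalize. Two small points in your write-up that deserve more care if you flesh it out: the dichotomy between the ample and anti-ample cases should be detected intrinsically (e.g.\ by the growth of $\dim\Hom(L_0,S^i(L_0)[-in])$ as $i\to+\infty$ versus $i\to-\infty$) before you can assert that $\Phi$ ``transports it correctly''; and the ``standard local computation'' reducing a finite-length sheaf with field endomorphism ring to $k(x)$ does need the socle-composition argument, not just indecomposability.
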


\begin{rmk}
    In \cref{thm:BO-reconstruction}, actually, it is sufficient to assume that either $\canmod_X$ or $\canmod_Y$ is (anti-)ample.
\end{rmk}

This reconstruction theorem has been generalized to some other settings (\cite{ballard2011derived}, \cite{calabrese2018relative} and \cite{sancho2012reconstructing}).
The method of the proof of \cref{thm:BO-reconstruction} has also applications such as determining the groups of autoequivalences of derived categories of coherent sheaves.

On the other hand, Artin and Zhang introduced the notion of noncommutative projective schemes (\cite{artin1994noncommutative}) from the viewpoint of the following result by Serre.
\begin{thm}[{\cite{serre1955faisceaux}}]
    Let $R$ be a commutative finitely generated graded $k$-algebra which is also generated by $R_1$ as 
    $R_0$-algebra.
    Then, 
    \[
    \Coh(\Proj(R)) \simeq \qgr (R),
    \]
    where $\qgr R$ is the Serre quotient category of the category $\gr R$ of finitely generated graded $R$-modules by the category $\tor R$ of finitely generated torsion graded $R$-modules.
\end{thm}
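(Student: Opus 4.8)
The plan is to build the \emph{sheafification} functor $\gr R\to\Coh(\Proj R)$, show that it descends to $\qgr R$, and exhibit a quasi-inverse via graded global sections; the whole difficulty will be concentrated in two finiteness statements about coherent cohomology on $\Proj R$.

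\emph{The functor $F\colon\qgr R\to\Coh(\Proj R)$.} As $R$ is a finitely generated $k$-algebra it is noetherian (Hilbert basis theorem), and as $R$ is generated over $R_{0}$ by $R_{1}$ the irrelevant ideal $R_{+}$ is generated by $R_{1}$; fix a finite generating set $x_{1},\dots,x_{N}$ of the $R_{0}$-module $R_{1}$, so that the affine opens $D_{+}(x_{i})$ cover $\Proj R$. For $M\in\gr R$ put $\Gamma(D_{+}(f),M^{\sim})=(M_{f})_{0}$ for homogeneous $f$ and glue along this cover; then $R^{\sim}=\mathcal{O}_{\Proj R}$, and because $R$ is generated in degree one the sheaves $\mathcal{O}(n):=R(n)^{\sim}$ are invertible with $\mathcal{O}(n)\otimes\mathcal{O}(m)\cong\mathcal{O}(n+m)$, so $(-)^{\sim}$ intertwines internal shift with twisting. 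Since localization and passage to the degree-zero part are exact, $(-)^{\sim}$ is an exact functor landing in $\Coh(\Proj R)$ --- each $(M_{x_{i}})_{0}$ is a finite $(R_{x_{i}})_{0}$-module when $M$ is finitely generated --- and it kills $\tor R$: a finitely generated torsion module is bounded above, hence $(M_{f})_{0}=0$ for every homogeneous $f$ of positive degree. By the universal property of the Serre quotient, $(-)^{\sim}$ factors as $\gr R\xrightarrow{\pi}\qgr R\xrightarrow{F}\Coh(\Proj R)$; it remains to prove $F$ is an equivalence.

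\emph{The quasi-inverse, modulo two facts.} For $\mathcal{F}\in\Coh(\Proj R)$ set $\Gamma_{*}\mathcal{F}=\bigoplus_{n\in\Zbb}\Gamma(\Proj R,\mathcal{F}(n))\in\Gr R$, with its canonical maps $\alpha_{M}\colon M\to\Gamma_{*}(M^{\sim})$ and $\beta_{\mathcal{F}}\colon(\Gamma_{*}\mathcal{F})^{\sim}\to\mathcal{F}$. I will use: \textbf{(a)} $\beta_{\mathcal{F}}$ is an isomorphism for every quasi-coherent $\mathcal{F}$; and \textbf{(b)} for every $M\in\gr R$, both the kernel and the cokernel of $\alpha_{M}$ are finitely generated torsion modules --- equivalently $\Gamma(\Proj R,M^{\sim}(n))$ is a finite $R_{0}$-module, naturally identified with $M_{n}$, for all $n\gg0$. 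Granting (a) and (b): given a coherent $\mathcal{F}$, ampleness of $\mathcal{O}(1)$ provides a surjection $\bigoplus_{j}\mathcal{O}(-n_{j})\twoheadrightarrow\mathcal{F}$; letting $M\subseteq\Gamma_{*}\mathcal{F}$ be the finitely generated submodule equal to the image of the adjoint map $\bigoplus_{j}R(-n_{j})\to\Gamma_{*}\mathcal{F}$, exactness of $(-)^{\sim}$ together with (a) identifies $M^{\sim}$ with a subsheaf of $\mathcal{F}$ through which the chosen surjection factors, so $M^{\sim}\cong\mathcal{F}$ and $F$ is essentially surjective. Putting $G\mathcal{F}=\pi M$, the natural transformations $\pi\alpha$ and $\beta$ are the unit and counit of an adjunction $F\dashv G$; by (b) the unit is invertible in $\qgr R$ and by (a) the counit is invertible, so $F$ is an equivalence.

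\emph{The main obstacle.} The entire weight of the argument lies in (a) and (b): comparing, in large internal degree, a graded module with the global sections of its twists. Statement (a) is a direct local computation on the cover $\{D_{+}(x_{i})\}$, clearing the denominators $x_{i}$. Statement (b), however, genuinely requires finiteness of coherent cohomology on $\Proj R$; I would obtain it by pushing forward along the closed immersion $\Proj R\hookrightarrow\mathbb{P}^{N}_{\Spec R_{0}}$ determined by $x_{1},\dots,x_{N}$ and invoking Serre's theorems on projective space --- the vanishing $\coho^{i}(\Proj R,\mathcal{F}(n))=0$ for $i>0$ and $n\gg0$, and the coherence and eventual stabilization of $\coho^{0}(\mathcal{F}(n))$ --- or, bypassing sheaf cohomology, by a uniform denominator-clearing estimate on the finite \v{C}ech complex of $\{D_{+}(x_{i})\}$. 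This is exactly where the hypotheses that $R$ is finitely generated and generated in degree one enter in an essential way.
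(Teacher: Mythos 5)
The paper states this classical result of Serre only as a citation, with no proof given, so there is nothing internal to compare against; your sketch is the standard argument (exact sheafification killing torsion, quasi-inverse via $\Gamma_{*}$, with the unit/counit controlled in large degrees by Serre's finiteness and vanishing theorems on $\Proj R\hookrightarrow\mathbb{P}^{N}_{\Spec R_0}$) and is correct, using the degree-one generation hypothesis exactly where it is needed. No gaps beyond the two facts (a) and (b) you explicitly flag and correctly attribute to coherent cohomology on projective space.
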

For any (not necessarily commutative) right noetherian $\Nbb$-graded $k$-algebra $R$, the noncommutative projective scheme associated to $R$ is defined as the quotient category $\qgr R$ of the category $\gr R$ of finitely generated graded right $R$-modules by the category $\tor R$ of finitely generated torsion graded right $R$-modules.
Considering Gabriel's result and others, the notion of a noncommutative projective scheme can be considered to be a generalization of the notion of a commutative projective scheme.
Furthermore, in noncommutative projective geometry, interesting objects such as Artin-Schelter (AS) regular algebras have been discovered and actively studied by many researchers.

In this paper, we consider the noncommutative version of \cref{thm:BO-reconstruction}.

Let $k$ be a field.
Let $A, B$ be noetherian (i.e., left and right noetherian) locally finite  $\Nbb$-graded $k$-algebras.
We assume that $A,B$ have balanced dualizing complexes (see \cref{def:dualizing complex}).
A canonical bimodule $\canmod_A$ for $\qgr (A)$ is a pair $(- \otimes \canmod_A, \Hom(\canmod_A,-))$ of autoequivalences of $\qgr (A)$ such that for some $n \in \Nbb$, the induced autoequivalence $- \otimes \canmod_A [n]$ of $D^b(\qgr (A))$ gives a Serre functor for $D^b(\qgr (A))$ (see \cref{def:bimodules}).
Let $\Ocal_A(i)$ be the object in $\QGr (A)$ corresponding to the right graded $A$-module $A(i)$.
The main theorem in this paper is the following.

\begin{thm}[= \cref{thm:re-main1}]
\label{thm:main1}
   We assume that $\qgr (A), \qgr (B)$ have canonical bimodules $\canmod_A, \canmod_B$, respectively.

    If $ - \otimes \canmod_A, - \otimes \canmod_B$ are AZ-(anti-)ample (see \cref{def:AZ-ample}), then 
    \[
     D^b(\qgr (A)) \simeq D^b(\qgr (B)) \Rightarrow \qgr(A) \simeq \qgr(B).
    \]

\end{thm}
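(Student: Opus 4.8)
The plan is to reproduce the Bondal--Orlov argument inside the triangulated category $D^{b}(\qgr(A))$, using its Serre functor in place of the canonical bundle and Artin--Zhang's reconstruction of a noncommutative projective scheme from an ample sequence in place of forming $\Proj$ of the anticanonical ring. Fix an exact equivalence $\Phi\colon D^{b}(\qgr(A))\xrightarrow{\sim}D^{b}(\qgr(B))$. A Serre functor is unique up to canonical isomorphism, so $\Phi$ intertwines the Serre functors of the two sides, $\Phi\circ S_{A}\cong S_{B}\circ\Phi$, where by hypothesis $S_{A}\cong(-\otimes\canmod_{A})[n_{A}]$ and $S_{B}\cong(-\otimes\canmod_{B})[n_{B}]$ with $n_{A},n_{B}\in\Nbb$.

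The first task is to recover the integer $n_{A}$ from $D^{b}(\qgr(A))$ together with its Serre functor alone. Following Bondal--Orlov I would introduce \emph{point objects of codimension $s$}: objects $P$ with $S_{A}(P)\cong P[s]$, with $\Hom_{D^{b}(\qgr(A))}(P,P[i])=0$ for $i<0$, and with $\End_{D^{b}(\qgr(A))}(P)$ a division ring. Using the balanced dualizing complex and the defining property of $\canmod_{A}$ (which together play the role of smoothness and projectivity), one shows that point objects exist, that every point object is a shift of an object of the heart $\qgr(A)$, and that they all have one and the same codimension, namely $s=n_{A}$; thus $n_{A}$ is an invariant of the pair (triangulated category, Serre functor). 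Since $\Phi$ carries point objects of codimension $s$ on the $A$-side to point objects of codimension $s$ on the $B$-side and back, and the $B$-side ones occur only in codimension $n_{B}$, we get $n_{A}=n_{B}=:n$. Hence $\sigma_{A}:=S_{A}[-n]$ and $\sigma_{B}:=S_{B}[-n]$ are autoequivalences of the respective triangulated categories with $\Phi\circ\sigma_{A}\cong\sigma_{B}\circ\Phi$, chosen coherently so that $\Phi\circ\sigma_{A}^{\,i}\cong\sigma_{B}^{\,i}\circ\Phi$ for all $i\in\Zbb$, and on the hearts $\sigma_{A}$ restricts to $-\otimes\canmod_{A}$ and $\sigma_{B}$ to $-\otimes\canmod_{B}$.

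Next I would characterize categorically the \emph{line-bundle-like objects}: those $L$ such that, for every point object $P$, $\bigoplus_{i}\Hom_{D^{b}(\qgr(A))}(L,P[i])$ is concentrated in a single degree and is one-dimensional over $\End(P)$. One shows these exist, that $\Phi$ preserves them, and that for such an $L_{A}$, chosen (after a shift) to lie in $\qgr(A)$, the orbit $\{\sigma_{A}^{\,i}(L_{A})\}_{i\ge 0}$ is an ample sequence of objects of the noetherian abelian category $\qgr(A)$ with $L_{A}$ a generator; this is exactly where the AZ-ampleness of $-\otimes\canmod_{A}$ is used (the anti-ample case is symmetric, with $\canmod_{A}^{-1}$, and which of the two holds is detected by the growth of $\dim_{k}\Hom_{D^{b}(\qgr(A))}(L_{A},\sigma_{A}^{\,i}L_{A})$, so it is forced to match on the $B$-side). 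Set $L_{B}:=\Phi(L_{A})$, again line-bundle-like (a shift of one in $\qgr(B)$, immaterial below), and form the $\Nbb$-graded $k$-algebras
\[
\Gamma_{A}:=\bigoplus_{i\ge 0}\Hom_{D^{b}(\qgr(A))}\bigl(L_{A},\sigma_{A}^{\,i}(L_{A})\bigr),\qquad
\Gamma_{B}:=\bigoplus_{i\ge 0}\Hom_{D^{b}(\qgr(B))}\bigl(L_{B},\sigma_{B}^{\,i}(L_{B})\bigr),
\]
each with multiplication $f\cdot g:=\sigma^{\,i}(g)\circ f$ for $f$ in degree $i$ and $g$ in degree $j$. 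The coherent intertwining of $\Phi$ with $\sigma_{A},\sigma_{B}$ together with $L_{B}=\Phi(L_{A})$ gives a graded $k$-algebra isomorphism $\Gamma_{A}\cong\Gamma_{B}$. Finally, by Artin--Zhang's reconstruction theorem — an abelian category equipped with an ample orbit of an autoequivalence and a generator is equivalent to $\qgr$ of the associated twisted homogeneous coordinate algebra — we obtain $\qgr(A)\simeq\qgr(\Gamma_{A})$ and, symmetrically, $\qgr(B)\simeq\qgr(\Gamma_{B})$; combining, $\qgr(A)\simeq\qgr(\Gamma_{A})\cong\qgr(\Gamma_{B})\simeq\qgr(B)$, as required.

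I expect the main obstacle to be the part that in the commutative case is supplied by smooth projectivity: developing, from the balanced dualizing complex and the defining property of $\canmod_{A}$ alone, enough of the theory of point objects and line-bundle-like objects — their existence, the constancy of the codimension, the identification of point objects with shifts of objects of the heart, and the fact that the $\sigma_{A}$-orbit of a line-bundle-like object is genuinely an ample sequence in $\qgr(A)$ — and then matching the derived $\Hom$'s that build $\Gamma_{A}$ with the abelian ones governing that ample sequence (equivalently, controlling the higher $\Ext$'s along the orbit), so that the twisted homogeneous coordinate algebra $\Gamma_{A}$ really does recover $\qgr(A)$ via the Artin--Zhang machinery. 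The AZ-ampleness hypothesis is precisely what feeds these last points.
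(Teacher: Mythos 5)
Your endgame --- producing a graded-algebra isomorphism between the (anti-)canonical rings and then invoking Artin--Zhang (\cref{thm:AZ-characterization}) to recover $\qgr(A)$ and $\qgr(B)$ --- is exactly the paper's final step in \cref{thm:re-main1}. But the route you take to that isomorphism, namely the classical Bondal--Orlov machinery of point objects and line-bundle-like (invertible) objects, contains a genuine gap in this setting, and it is precisely the gap the paper is organized around avoiding. In the commutative proof, the statements you list as "one shows" --- point objects exist, every point object is a shift of an object of the heart, all point objects have the same codimension $n$, invertible objects exist and are detected by pairing against point objects, and the equivalence therefore maps $\Ocal_A$ to a shift/twist of $\Ocal_B$ --- are proved using the geometry of supports of coherent sheaves: skyscrapers of closed points, spreading-out over the support, and the structure of $\Coh(X)$ for a smooth projective variety. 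None of this is available for $\qgr(A)$ of a general noetherian locally finite graded algebra with balanced dualizing complex. Simple objects of $\qgr(A)$ need not behave like skyscrapers (the paper notes this explicitly in the introduction); for many AS-regular algebras the objects $P$ with $\Scal(P)\cong P[s]$ are scarce, include ``fat points,'' or cannot be classified, and there is no argument that forces them all to live in a single codimension or that forces a line-bundle-like object to be a shift of something of the form $\Ocal_A(j)$. Since your recovery of $n_A=n_B$, your identification of $L_B=\Phi(L_A)$ as an object of the heart, and your claim that the $\sigma$-orbit of $L_A$ is an AZ-ample sequence all rest on this unproven point-object theory, the argument does not go through as written.

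The paper replaces every one of these steps by Fourier--Mukai kernel techniques. The equality of global dimensions ($n_A=n_B$) is \cref{prop:D-eq and dimension}, proved by writing the equivalence and its adjoints as kernel functors, using the uniqueness of kernels from \cref{thm:re-main2}(3), and comparing the two expressions for the right adjoint. The graded-algebra isomorphism $\canring(\qgr(A))\cong\canring(\qgr(B))$ is \cref{prop:re-main1}: one builds from the kernels of $F$ and $F^{-1}$ an equivalence $\tilde{\Phi}_{\qgrobjname{H}}$ between the derived categories of bigraded bimodules which carries $\pi(M_A^{\otimes l\,\bgm})$ to $\pi(M_B^{\otimes l\,\bgm})$ (powers of the dualizing bimodule), and then computes $\Hom$-spaces to get the ring isomorphism. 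Note also that the hypothesis is AZ-(anti-)ampleness of the specific pair $(\Ocal_A,-\otimes\canmod_A)$, so the Artin--Zhang step must be run with $\Ocal_A$ itself, not with an abstractly chosen line-bundle-like object $L_A$ whose orbit you would separately have to prove ample. If you want to salvage your approach, you would have to either develop the point-object theory for these categories from scratch (which the paper suggests is out of reach) or switch to the kernel-based comparison of canonical rings.
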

AZ-(anti-)ampleness is a generalization of (anti-)ampleness of line bundles on a projective scheme in noncommutative projective geometry, which is introduced in \cite{artin1994noncommutative}.
If we assume that $A,B$ are commutative connected graded $k$-algebras generated in degree 1, then \cref{thm:main1} recovers \cref{thm:BO-reconstruction} since any projective scheme over $k$ is isomorphic to a projective scheme associated to a commutative connected finitely generated graded $k$-algebra generated in degree 1.
The result also has an application in the study of AS regular algebras.

\begin{cor}[= \cref{cor:re-main1}]
\label{cor:main1}
Let $A,B$ be noetherian AS-regular algebras over $A_0, B_0$, respectively (see \cref{def:AS-regular-Gorenstein}).
Then, 
\[
     D^b(\qgr (A)) \simeq D^b(\qgr (B)) \Rightarrow \qgr (A) \simeq \qgr (B).
    \]
\end{cor}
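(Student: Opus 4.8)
The plan is to deduce \cref{cor:main1} from \cref{thm:main1} by checking that noetherian AS-regular algebras meet all of its hypotheses. Let $A$ be a noetherian AS-regular algebra over $A_0$, of global dimension $d$ and Gorenstein parameter $\ell$. First I would invoke the existence theorem for balanced dualizing complexes: a noetherian AS-Gorenstein algebra has a balanced dualizing complex, and for an AS-regular algebra it is, up to the standard conventions, ${}^{\mu}\!A(-\ell)[d]$, where $\mu$ is the Nakayama automorphism and ${}^{\mu}\!A$ is $A$ with its left module structure twisted by $\mu$. In particular the graded $A$-bimodule ${}^{\mu}\!A(-\ell)$ is invertible, so $-\otimes_{A}{}^{\mu}\!A(-\ell)$ descends to an autoequivalence $-\otimes\canmod_A$ of $\qgr(A)$ with quasi-inverse $\Hom(\canmod_A,-)$. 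The standard relation between a balanced dualizing complex and a Serre functor — the same one used in the proof of \cref{thm:main1} — then shows that $-\otimes\canmod_A[d]$ is a Serre functor for $D^b(\qgr(A))$, so $\canmod_A$ is a canonical bimodule in the sense of \cref{def:bimodules}. The same applies to $B$, with its own parameters $d',\ell',\mu'$.

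It remains to verify that $-\otimes\canmod_A$ is AZ-anti-ample in the sense of \cref{def:AZ-ample}. I would use two inputs. First, a noetherian AS-regular algebra satisfies the $\chi$ condition and has finite cohomological dimension (both consequences of the existence of the balanced dualizing complex), so by Artin--Zhang's ampleness criterion the pair $(\Ocal_A, s_A)$ is AZ-ample, where $s_A\colon \Fcal\mapsto\Fcal(1)$ is the shift autoequivalence of $\QGr(A)$; moreover a noncommutative analogue of Serre's observation ``$\Ocal(1)$ ample $\Rightarrow \Ocal(\ell)$ ample'' shows that $(\Ocal_A, s_A^{\ell})$ is then AZ-ample for $\ell\ge 1$. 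Second, on the level of objects of $\qgr(A)$ the autoequivalence $-\otimes\canmod_A$ agrees, up to the autoequivalence induced by the graded automorphism $\mu$, with $s_A^{-\ell}$; since an automorphism twist carries an AZ-ample pair to an AZ-ample pair, the pair $(\Ocal_A,(-\otimes\canmod_A)^{-1})$ is AZ-ample. Because $\ell>0$ for any AS-regular algebra, this says exactly that $-\otimes\canmod_A$ is AZ-anti-ample, and likewise for $B$. Applying \cref{thm:main1} then yields the desired implication $D^b(\qgr(A))\simeq D^b(\qgr(B))\Rightarrow\qgr(A)\simeq\qgr(B)$.

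The main obstacle is the bookkeeping in the second step: one has to pin down the conventions for the balanced dualizing complex of an AS-regular algebra so that the Gorenstein parameter enters with the sign making $-\otimes\canmod_A$ — rather than its inverse — the ``canonical'' autoequivalence, and one must check that the Nakayama twist genuinely intertwines the objects $s_A^{-n}\Ocal_A$ with the objects $(-\otimes\canmod_A)^{n}\Ocal_A$ appearing in the two ampleness conditions, so that AZ-ampleness really transfers between them. Everything else — existence of the dualizing complex, the passage to a Serre functor, and the reduction to \cref{thm:main1} — is either standard or already contained in the earlier results of the paper.
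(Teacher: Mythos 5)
There is a genuine gap in the second step. Your reduction hinges on the claim that AZ-ampleness of $(\Ocal_A, s_A)$ implies AZ-ampleness of $(\Ocal_A, s_A^{\ell})$, so that $-\otimes\canmod_A$ itself is AZ-anti-ample and \cref{thm:main1} can be applied directly to $A$ and $B$. This ``noncommutative Serre observation'' is false for the algebras covered by \cref{cor:re-main1}, because an AS-regular algebra in the sense of \cref{def:AS-regular-Gorenstein} need not be generated in degree $1$ over $A_0$. A concrete counterexample is $A=k[x]$ with $\deg x=2$, which is noetherian AS-regular of global dimension $1$ with Gorenstein parameter $\ell=2$: one has $\Hom_{\qgr(A)}(\Ocal_A(-2m),\Ocal_A(1))=\varinjlim_n\Hom_{\Gr(A)}(A_{\geq n},A(2m+1))=0$ for every $m$ since $A$ lives in even degrees, while $\Ocal_A(1)\neq 0$ because $A(1)$ is not torsion; hence condition (a) of \cref{def:AZ-ample} fails for the pair $(\Ocal_A,s_A^{2})$ at $\Mcal=\Ocal_A(1)$, even though $(\Ocal_A,s_A)$ is AZ-ample by \cref{thm:AZ-characterization}. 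In this example $-\otimes\canmod_A$ is \emph{not} AZ-anti-ample, so the hypothesis of \cref{thm:main1} genuinely fails for $A$ itself and your direct application of it breaks down; the conclusion of the corollary is nevertheless true, which shows the argument, not just its write-up, must change.

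The paper's proof circumvents exactly this obstruction by changing the polarization before invoking \cref{thm:re-main1}: it replaces $A$ by the quasi-Veronese algebra $A^{[l_A]}$, where $l_A$ is the Gorenstein parameter. By \cref{lem:quasi-veronese} one has $\qgr\bigl(A^{[l_A]}\bigr)\simeq\qgr(A)$ and hence $D^b\bigl(\qgr\bigl(A^{[l_A]}\bigr)\bigr)\simeq D^b(\qgr(A))$, and $A^{[l_A]}$ is again noetherian AS-regular but now with Gorenstein parameter $1$, so by \cref{lem:balanced dualizing of AS-regular} its balanced dualizing complex is $A^{[l_A]}_{\nu_1}(-1)[n_A]$ and the inverse of $-\otimes\canmod_{A^{[l_A]}}$ is the degree-one shift up to an automorphism twist; AZ-ampleness of that pair then does follow from \cref{thm:AZ-characterization}. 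The crucial point your argument misses is that under the equivalence $\qgr(A)\simeq\qgr\bigl(A^{[\ell]}\bigr)$ the distinguished object becomes $\bigoplus_{j=0}^{\ell-1}\Ocal_A(j)$ rather than $\Ocal_A$, which is precisely what rescues condition (a). Your first paragraph (existence of the balanced dualizing complex, the canonical bimodule, and the Serre functor) is correct and agrees with the paper; it is only the transfer of ampleness to the $\ell$-th power of the shift that needs to be rerouted through the quasi-Veronese construction.
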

To prove the corollary, the notion of quasi-Veronese algebras introduced in \cite{mori2013bconstruction} is useful.
This corollary follows from the observation that a noncommutative projective scheme associated to a quasi-Veronese algebra of an AS-regular algebra is isomorphic to the original noncommutative projective scheme and the fact that the canonical bimodule of the noncommutative projective scheme of an appropriate quasi-Veronese algebra of an AS regular algebra is AZ-anti-ample.
In particular, even when proving the corollary for connected graded AS-regular algebras, it is necessary to treat locally finite AS-regular algebras.
This is the reason why we prove \cref{thm:main1} for noncommutative projective schemes associated to locally finite graded $k$-algebras.

\bigbreak
We also study Fourier-Mukai functors between derived categories of noncommutative projective schemes.
Let $F:\Perf(\QGr(A)) \rightarrow D(\QGr (B))$ be an exact functor.
$F$ is called of \emph{Fourier-Mukai type} if there exists an object $\qgrobjname{E} \in D(\QBiGr(A^{\op} \otimes B))$ such that we have an isomorphism of functors $F(-) \simeq \Phi_\qgrobjname{E}(-):= \pi_B(\Romega_A(-)\otimes_{\Acal}^{\LD} \Romega_{A^{\op}\otimes_k B}(\qgrobjname{E}))$,
 where $\Acal$ is the associated dg category to $A$ and $\otimes_{\Acal}$ means the tensor product of dg $\Acal$ -modules.
 The objects of $\dgcatname{A}$ are the integers and the morphism spaces between $i,j \ (i,j \in \Zbb)$ are $\Hom_{\dgcatname{A}}(i,j)=A^{j-i}$.
 Note also that graded right (resp. left) $A$-modules can be considered as right (resp. left) dg $\Acal$-modules (in detail, see \cref{sec:FM in ncproj}).

We show that exact fully faithful functors between derived categories of noncommutative projective schemes are of Fourier-Mukai type.
In commutative algebraic geometry, whether exact functors between derived categories of coherent sheaves are of Fourier-Mukai type is studied in many settings (\cite{orlov1997equivalences}, \cite{lunts2010uniqueness}, \cite{cananoco2014fourier}, \cite{ballard2009equivalences} etc., see also \cite{cananoco2012fourier}). 
The second main theorem in this paper is the following.

\begin{thm}[= \cref{thm:re-main2}]
\label{thm:main2}
Let $F: \Perf(\QGr (A)) \rightarrow D(\QGr (B))$ be an exact fully faithful functor.
Then, there exists an object $\qgrobjname{E} \in D(\QBiGr(A^{\mathrm{op}} \otimes_k B))$ such that 
\begin{enumerate}
    \item $\Phi_{\qgrobjname{E}}$ is exact fully faithful and $\Phi_\qgrobjname{E}(\qgrobjname{P}) \simeq F(\qgrobjname{P})$ for any $\qgrobjname{P} \in \Perf(\QGr (A))$.
    \item If $F$ sends a perfect complex to a perfect complex, then the induced functor $\Phi_\qgrobjname{E}:D(\QGr (A)) \rightarrow D(\QGr (B))$ is fully faithful and $\Phi_\qgrobjname{E}$ sends a perfect complex to a perfect complex.
    \item If $\R^1\torfunct{A}(A)$ is a finite $A$-module, then $F \simeq \Phi_\qgrobjname{E}$.
    Moreover, if $\Phi_{\qgrobjname{E}} \simeq \Phi_{\qgrobjname{E}'}$ for some $\qgrobjname{E}' \in D(\QBiGr(A^{\mathrm{op}} \otimes_k B))$, then $\qgrobjname{E} \simeq \qgrobjname{E}'$.
\end{enumerate}
\end{thm}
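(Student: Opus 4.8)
To prove \cref{thm:main2}, the plan is to run the dg-categorical machinery that represents exact functors by kernels, adapted to noncommutative projective schemes. I would begin with two structural facts made available by the standing hypotheses on $A$ and $B$ (noetherian, locally finite, with balanced dualizing complexes): the local-cohomology functor $\torfunct{A}$ has finite cohomological dimension, and $A$ satisfies the condition $\chi$ of Artin and Zhang (both being consequences of the existence of a balanced dualizing complex). Finite cohomological dimension forces each $\Ocal_A(i)$ to be a \emph{compact} object of $D(\QGr(A))$, while the ample-sequence property from Artin--Zhang makes $\{\Ocal_A(i)\}_{i\in\Zbb}$ a set of generators; thus $D(\QGr(A))$ is compactly generated with $\Perf(\QGr(A))$ as its subcategory of compact objects. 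Fixing a dg enhancement $\Ddg(A)$ of $D(\QGr(A))$, the full dg subcategory $\Tcal_A\subseteq\Ddg(A)$ spanned by the $\Ocal_A(i)$ is quasi-equivalent to the ``tails'' dg category $\widetilde{\Acal}$ built from $\Acal$, and the adjunction $(\pi_A,\Romega_A)$ realizes $\Ddg(A)$ as a dg quotient of (an h-injective model of) dg $\Acal$-modules by torsion. The same holds for $B$ and for the bigraded algebra $A^{\op}\otimes_k B$, whose tails category recovers $\QBiGr(A^{\op}\otimes_k B)$.

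The next step is to lift $F$ and extract the kernel. Using the general lifting theorems for exact functors (Lunts--Orlov, Canonaco--Stellari) — whose hypotheses are met here because the $\Ocal_A(i)$ form a compact ample sequence with $\Hom$-complexes concentrated in nonnegative degrees — $F$ lifts to a dg quasi-functor $\widetilde{F}\colon\Tcal_A\to\Ddg(B)$ with $H^0\widetilde{F}\simeq F$. Extending $\widetilde{F}$ by colimits along $\Tcal_A\hookrightarrow\Ddg(A)$ gives a continuous dg functor $\overline{F}\colon\Ddg(A)\to\Ddg(B)$ restricting to $F$ on compacts. By To\"{e}n's derived Morita theorem, $\overline{F}$ is tensoring with a bimodule: there is $M\in D(\widetilde{\Acal}^{\op}\otimes\widetilde{\Bcal})$ with $\overline{F}\simeq(-)\otimes^{\LD}_{\widetilde{\Acal}}M$. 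Translating bimodules over $\widetilde{\Acal}^{\op}\otimes\widetilde{\Bcal}$ into the bigraded language — the objects of $\Acal^{\op}\otimes\Bcal$ form $\Zbb^2$ and the morphism spaces reassemble $A^{\op}\otimes_k B$ — and then taking tails in both variables carries $M$ to an object $\qgrobjname{E}\in D(\QBiGr(A^{\op}\otimes_k B))$. (Concretely, $\qgrobjname{E}$ may also be produced by a homotopy-coherent gluing of the objects $F(\Ocal_A(i))$ over the product, the route closer to Orlov's original representability argument.)

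Unwinding the identifications through the adjunctions $(\pi_A,\Romega_A)$, $(\pi_B,\Romega_B)$ and the realization of $\qgrobjname{E}$ via $\Romega_{A^{\op}\otimes_k B}$, the functor $(-)\otimes^{\LD}_{\widetilde{\Acal}}M$ becomes precisely $\Phi_{\qgrobjname{E}}$: lift an object of $\QGr(A)$ to a graded module with $\Romega_A$, tensor it over $\Acal$ (that is, over $A$ in the derived sense) with the lifted kernel $\Romega_{A^{\op}\otimes_k B}(\qgrobjname{E})$, and project the result back to tails with $\pi_B$. Hence $\Phi_{\qgrobjname{E}}\simeq\overline{F}$: $\Phi_{\qgrobjname{E}}$ is exact, $\Phi_{\qgrobjname{E}}(\qgrobjname{P})\simeq F(\qgrobjname{P})$ for every $\qgrobjname{P}\in\Perf(\QGr(A))$, and $\Phi_{\qgrobjname{E}}|_{\Perf(\QGr(A))}$ is fully faithful because $F$ is — this is (1). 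For (2), if $F$ carries perfect complexes to perfect complexes, then $\overline{F}=\Phi_{\qgrobjname{E}}$ carries the compact generators $\Ocal_A(i)$, hence all of $\Perf(\QGr(A))$, into $\Perf(\QGr(B))$; writing $D(\QGr(A))$ as the ind-completion of $\Perf(\QGr(A))$ and using that $\Phi_{\qgrobjname{E}}$ is continuous and preserves compactness, the map $\Hom(X,Y)\to\Hom(\Phi_{\qgrobjname{E}}X,\Phi_{\qgrobjname{E}}Y)$ reduces, after writing $X,Y$ as homotopy colimits of perfect complexes, to full faithfulness of $F$ on compacts, so $\Phi_{\qgrobjname{E}}$ is fully faithful on all of $D(\QGr(A))$ and visibly preserves perfectness.

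For (3) I would argue that the passage from the objectwise statement of (1) to a natural isomorphism $F\simeq\Phi_{\qgrobjname{E}}$, as well as the recovery of $\qgrobjname{E}$ from $\Phi_{\qgrobjname{E}}$, both come down to the uniqueness of the dg lift $\widetilde{F}$, which in turn requires the comparison between $A$ and the endomorphism algebra $\bigoplus_{n}\Hom_{\QGr(A)}(\Ocal_A,\Ocal_A(n))$ — equivalently between $\Acal$ and $\widetilde{\Acal}$ — to be tight. This discrepancy is governed by $\torfunct{A}(A)$ and $\R^1\torfunct{A}(A)$; since $\torfunct{A}(A)$ is automatically a finite $A$-module, the hypothesis that $\R^1\torfunct{A}(A)$ is finite too makes the comparison map an isomorphism outside finitely many degrees with finite-dimensional kernel and cokernel, which rigidifies the lift. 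This yields $F\simeq\Phi_{\qgrobjname{E}}$, and dually lets one run the Morita correspondence backwards: $\Phi_{\qgrobjname{E}}\simeq\Phi_{\qgrobjname{E}'}$ forces $M\simeq M'$ in $D(\widetilde{\Acal}^{\op}\otimes\widetilde{\Bcal})$, hence $\qgrobjname{E}\simeq\qgrobjname{E}'$. I expect the main obstacle to be not any single formal step but the glue between them: setting up \emph{mutually compatible} dg enhancements of $D(\QGr(A))$, $D(\QGr(B))$ and $D(\QBiGr(A^{\op}\otimes_k B))$ into which $F$ genuinely lifts, proving that the tails category of $\Acal^{\op}\otimes\Bcal$ is $\QBiGr(A^{\op}\otimes_k B)$, and verifying that the abstract bimodule transform agrees on the nose with $\pi_B(\Romega_A(-)\otimes^{\LD}_{\Acal}\Romega_{A^{\op}\otimes_k B}(\qgrobjname{E}))$.
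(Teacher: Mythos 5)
Your outline for parts (1) and (2) is essentially the paper's argument: lift $F$ to a quasi-functor via the Lunts--Orlov machinery, extend it to a coproduct-preserving quasi-functor between the big derived categories, and represent it by a kernel using the noncommutative-projective version of To\"en's theorem (\cref{thm:dg morita}, after Ballard et al.), which already packages the identification of $D(\QBiGr(A^{\op}\otimes_k B))$ with continuous quasi-functors $\Ddg(\QGr(A))\to\Ddg(\QGr(B))$. That part is sound and matches the paper.

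Part (3), however, has a genuine gap. The lifting theorem only gives an \emph{objectwise} isomorphism $\Phi_{\qgrobjname{E}}(\qgrobjname{P})\simeq F(\qgrobjname{P})$; promoting this to an isomorphism of functors is exactly the hard step, and ``the finiteness of $\R^1\torfunct{A}(A)$ rigidifies the lift'' is not an argument. The actual mechanism is: construct a BO-ample sequence in $\Locfree(\qgr(A))$ and invoke the Bondal--Orlov/Lunts--Orlov proposition that two exact functors agreeing on such a sequence are isomorphic. Two concrete points are missing. First, for a locally finite algebra not generated in degree $1$, the single twists $\Ocal_A(n)$ need \emph{not} form a BO-ample sequence (condition (2)(a) of \cref{def:BO-ample} can fail for individual twists); the paper has to bundle $r$ consecutive twists $L_n=\bigoplus_{j=0}^{r-1}\Ocal_A(j+rn)$ using the quasi-Veronese algebra $A^{[r]}$, chosen so that $A^{[r]}$ is generated in degree $1$. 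Second, the hypothesis that $\R^1\torfunct{A}(A)$ is finite is used at a specific place: together with the $\chi$-condition it makes $Q_A(A)$ left bounded via the exact sequence $0\to\torfunct{A}(A)\to A\to Q_A(A)\to\R^1\torfunct{A}(A)\to 0$, which gives $H^0(\qgr(A),\Ocal_A(i))=0$ for $i\ll 0$ and hence condition (2)(c) of BO-ampleness, $\Hom(\qgrobjname{M},L_n)=0$ for $n\ll 0$. Your heuristic points at the right exact sequence but never extracts the vanishing that the ample-sequence argument actually needs. Finally, for uniqueness of the kernel you cannot simply ``run the Morita correspondence backwards'': \cref{thm:dg morita} identifies kernels with quasi-functors, whereas the hypothesis is only an isomorphism of the induced triangulated functors $\Phi_{\qgrobjname{E}}\simeq\Phi_{\qgrobjname{E}'}$ on $H^0$; bridging that gap requires a separate uniqueness statement (the paper cites Genovese's theorem, which uses full faithfulness of the functor).
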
 
This theorem is a generalization of \cite[Corollary 9.13]{lunts2010uniqueness} to the setting of noncommutative projective geometry.
In the same way, we can obtain a simpler version of \cref{thm:main2}.

\begin{prop}[= \cref{prop:re-main2}]
\label{prop:main2}
Let $\Perf(\QGr (A)) \rightarrow D(\QGr (B))$ be an exact fully faithful functor.
We assume that $H^0(\QGr (A),\Ocal_A(m))=0,\ m \ll 0$.
Then, $F$ is of Fourier-Mukai type and the Fourier-Mukai kernels are unique up to quasi-isomorphism.
 \end{prop}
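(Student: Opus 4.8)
The plan is to re-run the proof of \cref{thm:re-main2}, using only its conclusions (1) and (3) and feeding the hypothesis of the proposition into (3) in place of the finiteness of $\R^1\torfunct{A}(A)$. Concretely, applying the construction of \cref{thm:re-main2}(1) to the given exact fully faithful $F$ already produces an object $\qgrobjname{E}\in D(\QBiGr(A^{\op}\otimes_k B))$ together with an exact fully faithful $\Phi_{\qgrobjname{E}}$ and isomorphisms $\Phi_{\qgrobjname{E}}(\qgrobjname{P})\simeq F(\qgrobjname{P})$ for every $\qgrobjname{P}\in\Perf(\QGr(A))$. What remains is to promote this agreement on objects to an isomorphism of functors $F\simeq\Phi_{\qgrobjname{E}}$ and to check that $\qgrobjname{E}$ is determined by $F$ up to quasi-isomorphism.

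For this I would revisit the proof of \cref{thm:re-main2}(3), which performs exactly this promotion: it lifts $F$ to a dg quasi-functor between fixed dg enhancements of $\Perf(\QGr(A))$ and $D(\QGr(B))$ (appealing to the Lunts--Orlov uniqueness-of-enhancement technology, adapted to noncommutative projective schemes), uses that such enhancements are controlled, via $\pi_A$ and $\Romega_A$, by the dg category $\Acal$ together with the tautological family $\{\Ocal_A(i)\}_{i\in\Zbb}$, and represents the resulting quasi-functor by a genuine bimodule over $A^{\op}\otimes_k B$. The finiteness of $\R^1\torfunct{A}(A)$ is used there only to guarantee that the canonical comparison between $\Acal$ and the dg endomorphism algebra of the tautological family in the enhancement of $D(\QGr(A))$ behaves as expected in the relevant range of internal degrees; the obstruction to this is precisely the discrepancy, in negative internal degrees $m$ (where $A_m=0$), between $A_m$ and $H^0\RHom_{\QGr(A)}(\Ocal_A,\Ocal_A(m))=\Hom_{\QGr(A)}(\Ocal_A,\Ocal_A(m))=H^0(\QGr(A),\Ocal_A(m))$. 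Since this group is identified, through the standard Artin--Zhang sequence relating $A$ with $\Romega_A\pi_A(A)$, with a submodule of $\R^1\torfunct{A}(A)_m$, the hypothesis $H^0(\QGr(A),\Ocal_A(m))=0$ for $m\ll 0$ — which is in turn implied by, but weaker than, finite generation of $\R^1\torfunct{A}(A)$ (local cohomology being $\mathfrak{m}_A$-torsion, a finitely generated such module is finite dimensional, hence bounded) — is exactly what is needed to run the argument. Feeding it in yields $F\simeq\Phi_{\qgrobjname{E}}$, and the uniqueness of the kernel up to quasi-isomorphism follows in the same way, from the injectivity on isomorphism classes of $\qgrobjname{E}\mapsto\Phi_{\qgrobjname{E}}$ once the enhancement has been rigidified.

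The main obstacle is the same one already confronted in \cref{thm:re-main2}: pinning down the dg enhancement of $\Perf(\QGr(A))$ and controlling the comparison between $\Acal$ and the tautological family finely enough — including a careful bookkeeping of which internal and cohomological degrees actually intervene, and of the fact that $A$ is only assumed locally finite rather than connected. Granting \cref{thm:re-main2}, the additional content here is confined to replacing one hypothesis with another in that degree-range argument, which is why the statement can be advertised as a simpler version.
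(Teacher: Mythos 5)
Your proposal is correct and follows the paper's route: the only role of the finiteness of $\R^1\torfunct{A}(A)$ in the proof of \cref{thm:re-main2}(3) is to secure the vanishing $H^0(\QGr(A),\Ocal_A(m))=0$ for $m\ll 0$ (via the Artin--Zhang sequence \eqref{eq:canonical exact sequence}, exactly as you say), and since the proposition assumes that vanishing outright, the proof of \cref{thm:re-main2} runs unchanged. One correction to your account of the mechanism: the vanishing is not needed for a comparison between $\Acal$ and the dg endomorphism algebra of the tautological family, but rather to verify condition (c) of BO-ampleness for the sequence $\{L_i\}$ in \cref{lem:BO-ample 2} (namely $\Hom_{\qgr(A)}(\Mcal,L_n)=0$ for $n\ll 0$, which reduces to $H^0(\QGr(A),\Ocal_A(i))=0$ for $i\ll 0$ via an epimorphism $L_{-k}^{\oplus n}\twoheadrightarrow\Mcal$); this BO-ampleness is the hypothesis of \cite[Proposition 9.6]{lunts2010uniqueness}, which is what upgrades the object-level agreement $\Phi_{\qgrobjname{E}}(\qgrobjname{P})\simeq F(\qgrobjname{P})$ to an isomorphism of functors, the uniqueness of the kernel then following from full faithfulness alone as in \cref{rmk:re-main2}.
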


To prove \cref{thm:main2}, we use methods in \cite{lunts2010uniqueness}.
In particular, we have to construct an ample sequence of objects in $\Perf(\QGr (A))$ in the sense of Bondal-Orlov (\cite{Bondal2001reconstruction}, \cite{lunts2010uniqueness}).
In this paper, ample sequences are referred to as BO-ample sequences to distinguish the notion of BO-ampleness from that of AZ-ampleness.
However, because we cannot use geometric techniques as in the case of commutative algebraic geometry, we need to use some other algebraic methods to construct a BO-ample sequence in $\Perf(\QGr (A))$.
The notion of quasi-Veronese algebras is again useful.
Taking quasi-Veronese algebras means that we take another polarization of the noncommutative projective scheme.
In particular, considering quasi-Veronese algebra is convenient because higher quasi-Veronese algebras are generated in degree 1.
These discussions including the proof of  \cref{thm:main2} are in \cref{sec:FM in ncproj}.

To prove \cref{thm:main1}, we use techniques in \cite{ballard2021kernels}.
In \cite{ballard2021kernels}, the authors extended results in \cite{toen2007homotopy} to the setting of noncommutative projective geometry.
In \cref{sec:BO for ncproj}, we study properties of Fourier-Mukai functors in noncommutative projective geometry. 
In the original proof of \cref{thm:BO-reconstruction}, the notions of point-like objects and invertible objects by Bondal and Orlov.
However, in noncommutative projective geometry, it is difficult to study these objects.
For example, this is because simple objects in $\qgr (A)$ are more complicated than simple objects in $\Coh(X)$.
Therefore, we use ideas of arguments in \cite[Section 6]{huybrechts2006fourier}, where the main technique is the use of Fourier-Mukai functors.
By proving generalized results in \cite{huybrechts2006fourier}, we can prove \cref{thm:main1}.

\subsection*{Acknowledgements}
The author would like to thank Yuki Imamura for helpful discussions and comments.
This work was supported by JSPS KAKENHI Grant Number 24K22841.

\section{Preliminaries}
\label{sec:prelim}

In this paper, we work over a field $k$.
\subsection{Dg-categories}
\label{subsec:dgcat}
We recall basic notions about dg-categories and quasi-functors.
We refer the reader to \cite{keller2006differential} and \cite{chen2021informal} for good surveys.

A \emph{dg category} is a $k$-linear category such that, for all $A,B \in \Ob(\dgcatname{A})$, the morphism spaces $\Hom_\dgcatname{A}(A,B)$ are $\Zbb$-graded  $k$-modules with a differential $d:\Hom_\dgcatname{A}(A,B) \rightarrow \Hom_\dgcatname{A}(A,B)$ of degree 1 and the composition maps are morphisms of complexes.
A \emph{dg functor} $\dgfunname{F}:\dgcatname{A} \rightarrow \dgcatname{B}$ between dg categories is given by a map $\Ob(A) \rightarrow \Ob(B)$ and morphisms of complexes of $k$-modules $\Hom_\dgcatname{A}(A,B) \rightarrow \Hom_\dgcatname{B}(F(A),F(B))$ compatible with the compositions and the units.
The category with objects dg categories and
morphisms dg functors is denoted by $\dgcat$.
Given two dg categories $\dgcatname{A}, \dgcatname{B}$, one can construct the dg categories $\Fundg(\dgcatname{A}, \dgcatname{B})$ and $\dgcatname{A} \otimes \dgcatname{B}$ (for detail, see \cite[Section 2.3]{keller2006differential}).
The objects of $\Fundg(\dgcatname{A}, \dgcatname{B})$ are dg functors from $\dgcatname{A}$ to $\dgcatname{B}$ and the morphisms are dg natural transformations of dg functors from $\dgcatname{A}$ to $\dgcatname{B}$.
The objects of $\dgcatname{A} \otimes \dgcatname{B}$ are pairs $(A,B)$ of objects $A \in \dgcatname{A}$ and $B \in \dgcatname{B}$ and the morphisms are given by 
\[
    \Hom_{\dgcatname{A} \otimes \dgcatname{B}}((A,B),(A',B')) = \Hom_\dgcatname{A}(A,A') \otimes_k \Hom_\dgcatname{B}(B,B').
\]
For dg categories $\dgcatname{A}, \dgcatname{B}, \dgcatname{C}$, we have the natural isomorphism
\[
    \Fundg(\dgcatname{A} \otimes \dgcatname{B}, \dgcatname{C}) \simeq \Fundg(\dgcatname{A}, \Fundg(\dgcatname{B}, \dgcatname{C})).
\] 

For a dg categories $\dgcatname{A}$, we denote by $Z^0(\dgcatname{A})$ and $H^0(\dgcatname{A})$ the \emph{standard category} and the \emph{homotopy category} of $\dgcatname{A}$, respectively.
$Z^0(\dgcatname{A})$ and $H^0(\dgcatname{A})$ are the categories with the same objects as those of $\dgcatname{A}$ and morphisms 
\[
    \Hom_{Z^0(\dgcatname{A})}(A,B) = Z^0(\Hom_\dgcatname{A}(A,B)), \quad \Hom_{H^0(\dgcatname{A})}(A,B) = H^0(\Hom_\dgcatname{A}(A,B)).
\]
For a dg-category $\dgcatname{A}$, $\dgcatname{A}^{\op}$ denotes the \emph{opposite category} of $\dgcatname{A}$.
Its objects are the same as those of $\dgcatname{A}$ and for all $A,B \in \Ob(\dgcatname{A})$, the morphisms are defined by 
\[
\Hom_{\dgcatname{A}^{\op}}(A,B) = \Hom_{\dgcatname{A}}(B,A).
\]
The composition of $f \in \Hom_{\dgcatname{A}^{\op}}(A,B)$ and $g \in \Hom_{\dgcatname{A}^{\op}}(B,C)$ is defined by $(-1)^{\deg(f)\deg(g)}fg$.

For a dg category $\dgcatname{A}$, we define a \emph{right} \emph{dg $\dgcatname{A} $-module} as a dg-functor $\dgcatname{A}^{\op} \rightarrow \Cdg(k)$, where $\Cdg(k)$ is the dg category of complexes of $k$-modules.
A \emph{left dg $\dgcatname{A}$-module} is a dg functor $\dgcatname{A} \rightarrow \Cdg(k)$.
Let $\dgcatname{B}$ be another dg category.
A \emph{dg $\dgcatname{A}$-$\dgcatname{B}$-bimodule} is a dg functor $\dgcatname{A} \otimes \dgcatname{B}^{\op} \rightarrow \Cdg(k)$.
We set $\dgmod(\dgcatname{A}) = \Fundg(\dgcatname{A}^{\op}, \Cdg(k))$.
When we simply say a dg $\dgcatname{A}$-module, we mean a right dg $\dgcatname{A}$-module.
We denote by $\Acydg(\dgcatname{A})$ the full dg subcategory of $\dgmod(\dgcatname{A})$ consisting of acyclic dg modules.
The \emph{dg derived category} $D_{\dg}(\dgcatname{A})$ is the dg quotient $\dgmod(\dgcatname{A})/\Acydg(\dgcatname{A})$.
The \emph{derived category} $D(\dgcatname{A})$ is the Verdier quotient $H^0(\dgmod(\dgcatname{A}))/H^0(\Acydg(\dgcatname{A}))$.
For each object $X \in \dgcatname{A}$, we have the right dg module $\yoneda_\dgcatname{A}(X)$ \emph{representable} by X, which induces the dg Yoneda functor 
\[
\yoneda_\dgcatname{A}: \dgcatname{A} \rightarrow \dgmod(\dgcatname{A}), \quad X \mapsto \yoneda_\dgcatname{A}(X):=\dgcatname{A}(-,X).
\]
We denote by $\bar{\dgcatname{A}} \subset \hproj(\dgcatname{A})$ the full dg subcategory with objects the dg modules which are homotopy equivalent to objects in the image of $\yoneda_\dgcatname{A}$.

A dg-module is \emph{free} if it is isomorphic to a direct sum of dg-modules of the form $\yoneda_\dgcatname{A}(X)[n]$, where $X \in \dgcatname{A}$ and $n \in \Zbb$.
A dg-module $\dgmodname{M}$ is \emph{semi-free} if it has a filtration 
\[
0=\dgmodname{M}_0 \subset \dgmodname{M}_1 \subset \cdots\subset \dgmodname{M}_i \subset \cdots \subset  \dgmodname{M}
\] such that $\dgmodname{M}_i/\dgmodname{M}_{i-1}$ is free for all $i$.
The full dg subcategory of semi-free dg modules is denoted by $\sfmod(\dgcatname{A})$.
A semi-free dg module $\dgmodname{M}$ is \emph{finitely generated} if $\dgmodname{M}_n=\dgmodname{M}$ for some $n \in \Nbb$ and $\dgmodname{M}_i/\dgmodname{M}_{i-1}$ is a finite direct sum of dg modules of the form $\yoneda_\dgcatname{A}(X)[n]$.
A dg module $\dgmodname{M}$ is \emph{perfect} if it is homotopy equivalent to a direct summand of a finitely generated semi-free dg module.
We denote by $\Perfdg (\dgcatname{A})$ the full dg subcategory of $\dgmod(\dgcatname{A})$ consisting of perfect dg modules.  
For a dg-module $\dgmodname{P}$, $\dgmodname{P}$ is called \emph{h-projective} if 
\[
H^0(\Hom_{\dgmod(\dgcatname{A})}(\dgmodname{P},\dgmodname{N}))=0
\] for every acyclic dg module $\dgmodname{N}$.
We denote by $\hproj(\dgcatname{A})$ the full dg subcategory of $\dgmod(\dgcatname{A})$ consisting of h-projective dg modules.
Note that we have the canonical dg functors 
\[
\sfmod(\dgcatname{A}) \hookrightarrow \hproj(\dgcatname{A}) \hookrightarrow \dgmod(\dgcatname{A}),
\]
which induce the equivalences
\[
H^0(\sfmod(\dgcatname{A})) \simeq H^0(\hproj(\dgcatname{A})) \simeq D(\dgcatname{A}).
\]

\begin{dfn}
    \label{def:compact object}
    Let $\Tcal$ be a triangulated category that admits arbitrary direct sums.
    An object $X \in \Tcal$ is called \emph{compact} if the functor $\Hom_\Tcal(X,-)$ commutes with arbitrary direct sums.
    We denote by $\Tcal^c$ the full subcategory of compact objects in $\Tcal$.
    Let $S \subset \Ob(\Tcal^c)$ is called a set of compact generators if any object $Y$ in $\Tcal$ such that $\Hom_\Tcal(X,Y[n])=0$ for all $X \in S$ and all $n \in \Zbb$ is the zero object.
\end{dfn}

\begin{eg}[See also {\cite[Example 1.9]{lunts2010uniqueness}}]
    \label{eg:perfect dg module}
Let $\dgcatname{A}$ be a dg-category.
The set $\{\yoneda_\dgcatname{A}(X)\}_{X \in \dgcatname{A}}$ is a set of compact generators for $D(\dgcatname{A})$.
The subcategory $D(\dgcatname{A})^c$ of compact objects coincides with the subcategory  $\Perf(\dgcatname{A})$ of perfect dg modules.
\end{eg}

Let $\dgmodname{M} \in \dgmod(\dgcatname{A})$ and $\dgmodname{N} \in \dgmod(\dgcatname{A}^{\op})$.
Then, the tensor product $\dgmodname{M} \otimes_\dgcatname{A} \dgmodname{N}$ is defined as 
\begin{align*}
    \dgmodname{M} \otimes_\dgcatname{A} \dgmodname{N}:= \Cok \left(\Xi: \bigoplus_{A,B \in \dgcatname{A}} \dgmodname{M}(A) \otimes_k \Hom_{\dgcatname{A}}(B, A) \otimes_k \dgmodname{N}(B) \rightarrow \bigoplus_{C \in \dgcatname{A}} \dgmodname{M}(C) \otimes_k \dgmodname{N}(C)  \right),
\end{align*}
where 
\begin{align*}
    \Xi((m,f,n)):=&\dgmodname{M}(f)(m) \otimes n -(-1)^{\deg(m)\deg(f)}
     m \otimes \dgmodname{N}(f)(n) \\
    &\in (\dgmodname{M}(A) \otimes_k \dgmodname{N}(A)) \oplus (\dgmodname{M}(B) \otimes_k \dgmodname{N}(B)).
\end{align*}

In addition, let $\dgmodname{M} \in \dgmod(\dgcatname{A^{\op} \otimes B})$ and $\dgmodname{N} \in \dgmod(\dgcatname{B^{\op} \otimes C})$.
Then, the tensor product $\dgmodname{M} \otimes_{\dgcatname{B}} \dgmodname{N} \in \dgmod(\dgcatname{A^{\op} \otimes C})$ is defined as
\[
(\dgmodname{M} \otimes_{\dgcatname{B}} \dgmodname{N})(A,C):= \dgmodname{M}(A,-) \otimes_{\dgcatname{B}} \dgmodname{N}(-,C)
\]
for any $A \in \dgcatname{A}$ and $C \in \dgcatname{C}$.

\begin{dfn}[See also {\cite[Definition 3.2, Propostion 3.6]{genovese2017adjunctions}}]
    \label{def:coend}
    Let $\dgfunname{F}: \dgcatname{A}^{\op} \otimes \dgcatname{A} \rightarrow \Cdg(k)$ be a dg $\dgcatname{A}$-$\dgcatname{A}$-bimodule.
    The \emph{coend} $\int^{A \in \dgcatname{A}} F(A,A)$ of $F$ is defined by 

    \begin{align*}
       \int^{A \in \dgcatname{A}} F(A,A) := \Cok \left(\bigoplus_{A,B \in \dgcatname{A}} \dgcatname{A}(B,A) \otimes F(A,B) \rightarrow \bigoplus_{C \in \dgcatname{A}} F(C,C) \right), \\
        f \otimes x \mapsto f \cdot x - (-1)^{\deg(f)\deg(x)} x \cdot f.
    \end{align*}
\end{dfn}

\begin{eg}[See also {\cite[Appendix C.3]{drinfeld2004dgquotients}, \cite[Example 2.13]{imamura2024formal}}]
Let $\dgmodname{M},\dgmodname{N}$ be a right and left dg $\dgcatname{A}$-module, respectively.
Then, 
\[
\dgmodname{M} \otimes_\dgcatname{A} \dgmodname{N} = \int^{A \in \dgcatname{A}} \dgmodname{M}(A) \otimes_k \dgmodname{N}(A).
\]
 \end{eg}

Given two dg cateogries $\dgcatname{A}, \dgcatname{B}$, then there is an isomorphism of dg-categories $\dgmod(\dgcatname{A}^{\op} \otimes \dgcatname{B}) \simeq \Fundg(\dgcatname{A}, \dgmod(\dgcatname{B}))$.
So, for an object $E \in \dgmod(\dgcatname{A}^{\op} \otimes \dgcatname{B})$, we have a corresponding dg functor $\Psi_E: \dgcatname{A} \rightarrow \dgmod(\dgcatname{B})$.
Conversely, let $F: \dgcatname{A} \rightarrow \dgcatname{B}$ be a dg functor.
Then, there exists a unique $E \in \dgmod(\dgcatname{A}^{\op} \otimes \dgcatname{B})$ such that $\Psi_E = F$.

Let $F:\dgcatname{A} \rightarrow \dgmod(\dgcatname{B})$ be a dg functor corresponding to $E \in \dgmod(\dgcatname{A}^{\op} \otimes \dgcatname{B})$.
Following \cite[Section 3.1]{cananoco2015internal}, we define the extension and restriction of $F$.
The \emph{extension} of $F$ is the dg functor $F^*: \dgmod(\dgcatname{A}) \rightarrow \dgmod(\dgcatname{B})$ defined by $F^*(-):= - \otimes_\dgcatname{A} E $.
The \emph{restriction} of $F$ is the dg functor $F_*: \dgmod(\dgcatname{A}) \rightarrow \dgmod(\dgcatname{B})$ defined by $F_*(-):= \Hom_\dgcatname{A}(F(-),E)$.
The dg functors $(F^\ast, F_\ast)$ are adjoint to each other: for $\dgmodname{M} \in \dgmod(\dgcatname{A})$ and $\dgmodname{N} \in \dgmod(\dgcatname{B})$, we have the natural isomorphism
\[
\Hom_{\dgmod(\dgcatname{B})}(F^\ast(\dgmodname{M}), \dgmodname{N}) \simeq \Hom_{\dgmod(\dgcatname{A})}(\dgmodname{M}, F_\ast(\dgmodname{N})).
\]
Let $\dgmodname{M} $ 
be a dg $\dgcatname{A}$-$\dgcatname{B}$-bimodule.
Then, $\dgmodname{M}$ is \emph{right quasi-respresentable} or a \emph{quasi-functor} if the dg $\dgcatname{B}$-module $\dgmodname{M}(A, -)$ is quasi-representable for all $A \in \dgcatname{A}$, i.e., $\dgmodname{M}(A, -)$ is isomorphic to a representable dg $\dgcatname{B}$-module in $D(\dgcatname{B})$ for all $A \in \dgcatname{A}$.

\subsection{Noncommutative projective geometry}
\label{subsec:ncproj}
We recall basic notions about noncommutative projective geometry introduced in \cite{artin1994noncommutative}.

Let $A= \bigoplus_{i \in \Zbb} A_i$ be a $\Zbb$-graded $k$-algebra.
We denote by $A^{\op}$ the \emph{opposite algebra} of $A$ and define
the \emph{enveloping algebra} $A^{\en} = A \otimes_k A^{\op}$.
We denote by $\Gr(A)$ the category of graded right $A$-modules with morphisms the $A$-module homomorphisms of degree $0$ and by $\gr(A)$ the full subcategory of finitely generated graded right $A$-modules.
For a graded right $A$-module $M$, we define the Matlis dual $M' \in \Gr(A^{\op})$ by $M'_i:= \Hom_k(M_{-i},k)$.
For an integer $n \in \Zbb$, we define the \emph{truncation} $M_{\geq n} (\text{resp. }M_{\leq n}) \in \Gr(A)$ by $M_{\geq n} = \bigoplus_{i \geq n} M_i$ (resp. $M_{\leq n} = \bigoplus_{i \leq n} M_i$) and the \emph{shift} $M(n) \in \Gr(A)$ by $M(n)_i = M_{i+n}$.
We say that $M$ is \emph{right} (respectively, \emph{left}) \emph{bounded} if there exists $n \in \Zbb$ such that $M_{\geq n} = 0$ (respectively, $M_{\leq n} = 0$).
We say that $M$ is \emph{bounded} if it is both right and left bounded.
For $M,N \in \Gr(A)$, we write the graded vector space 
\[
\Homint_A(M,N):= \bigoplus_{n \in \Zbb} \Hom_{\Gr(A)}(M,N(n)), \quad \Extint_A^i(M,N):= \bigoplus_{n \in \Zbb} \Ext_{\Gr(A)}^i(M,N(n)) \ (i \geq 0).
\]
An $\Nbb$-graded $k$-algebra $A$ is called noetherian if $A$ is right and left noetherian. 

Let $A$ be an $\Nbb$-graded $k$-algebra.
If $A_0=k$, then $A$ is called \emph{connected}. 
If $A_i$ is a finite-dimensional $k$-vector space for all $i \in \Nbb$, then $A$ is called \emph{locally finite}.
In particular, if $A$ is right noetherian and $A_0$ is finite dimensional, then $A$ is locally finite.
We often refer to a connected $\Nbb$-graded $k$-algebra simply as a connected graded $k$-algebra when there is no confusion.
We assume that $A$ is a noetherian $\Nbb$-graded $k$-algebra.
For $M \in \Gr(A)$, an element $m \in M$ is called \emph{torsion} if there exists $n \in \Nbb$ such that $mA_{\geq n} = 0$.
When any element of $M$ is torsion, we say that $M$ is a \emph{torsion module}.
Denote by $\Tor(A)$ (respectively, $\tor(A)$) the full subcategory of $\Gr(A)$ (respectively, $\gr(A)$) consisting of torsion modules.
We write $\QGr (A)$ (respectively, $\qgr (A)$) for the quotient category of $\Gr(A)$ (respectively, $\gr(A)$) by $\Tor(A)$.
We write $\pi_A$ for the projection functor and $\omega_A$ for its right adjoint.
We also denote the composition $\Q_A:=\omega_A\pi_A$.
We denote by $\tau_A$ the functor that takes $M \in \Gr(A)$ to the submodule of $M$ consisting of torsion elements.
Note that we have isomorphisms of graded right $A$-modules (cf. \cite[page 246]{artin1994noncommutative})
\begin{align*}
    \Q_A(M) \simeq \lim_{n \to \infty} \Homint(A_{\geq n}, M), \quad 
    \tau_A(M) \simeq \torfunct{A}(M):= \lim_{n \to \infty} \Homint(A/A_{\geq n}, M).
\end{align*}
We define the Serre twisting sheaf $\qgrobjname{O}_A(n) := \pi_A(A(n))$ for $n \in \Zbb$.

\begin{dfn}[{\cite[Definition 3.2]{artin1994noncommutative}}]
    \label{def:chi-conditions}
Let $A$ be a locally fnite noetherian $\Nbb$-graded $k$-algebra.
We say that $A$ satisfies \emph{the $\chi$-condition} if for any $M \in \gr(A)$, $\Extint^i(A_0,M)$ is bounded for all $i \in \Zbb$. 
We say that $A$ satisfies \emph{$\chi^{\op}$-condition} if $A^{\op}$ satisfies the $\chi$-condition.

\end{dfn}

\begin{dfn}[{\cite[Definition 2.6]{mori2021acategorical}}]
    \label{def:EF condition}
    For a locally finite $\Nbb$-graded algebra $A$, we say that $A$ satisfies the condition (EF) if every dimensional graded right $A$-module $M$ is graded right coherent (i.e., $M$ is finitely generated and every finitely generated graded submodule of $M$ is finitely presented). 
\end{dfn}

\begin{rmk}[{\cite[Page 501]{mori2021acategorical}}]
    \label{rmk:EF condition}
    When $A$ is a connected graded $k$-algebra, the condition (EF) is equivalent to the condition that $A$ is Ext-finite.
    In addition, A right coherent $\Nbb$-graded $k$-algebra $A$ satisfies the condition (EF).
    If a locally finite $\Nbb$-graded $k$-algebra $A$ satisfies the condition (EF), then $\Rtorfunct{A}(-)$ commutes with direct limits (\cite[Lemma 2.17]{mori2021acategorical}).
    By using this property, it is proved that the local duality theorem holds for a locally finite $\Nbb$-graded $k$-algebra $A$ satisfying the condition (EF) (\cite[Theorem 2.18]{mori2021acategorical}).
    In additon, we can show that $\R^i\torfunct{A}(M)=0$ for all $i>0$ and a graded torsion right $A$-module $M$ by also using \cite[Proposition 3.1 (5)]{artin1994noncommutative} (cf. \cite[Lemma 5.10]{mori2021local}, \cite[Lemma 4.4]{vandenbergh1997existence})
\end{rmk}

\begin{dfn}[cf. {\cite[Definitions 6.1 and 6.2]{vandenbergh1997existence}, \cite[Definitions 3.3 and 4.1]{yekutieli1992dualizing}}]
    \label{def:dualizing complex}
    Let $A$ be a noetherian locally finite $\Nbb$-graded $k$-algebra.
    A \emph{dualizing complex} for $A$ is a complex $R \in D(\Gr(A^{\en}))$ such that 
    \begin{enumerate}
        \item $R$ has finite injective dimension over $A$ and $A^{\op}$,
        \item the cohomologies of $R$ are finitely generated as both right and left $A$-modules,
        \item the natural morphisms $A \rightarrow \RHom_{A}(R,R)$ and $A \rightarrow \RHom_{A^{\op}}(R,R)$ are isomorphisms in $D(\Gr(A^{\en}))$.
    \end{enumerate}
    $R$ is called \emph{balanced} if $\Rtorfunct{A}(R) \simeq A'$ and $\Rtorfunct{A^{\op}}(R) \simeq A'$ in $D(\Gr(A^{\en}))$.
\end{dfn}

\begin{thm}[cf. {\cite[Theorem 6.3]{vandenbergh1997existence}}]
    \label{thm:balanced dualizing complex}
    A noetherian locally finite $\Nbb$-graded $k$-algebra $A$ has a balanced dualizing complex if and only if
    \begin{enumerate}
        \item $A$ satisfies the $\chi$-condition and $\chi^{\op}$-condition,
        \item $\torfunct{A}$ and $\torfunct{A^{\op}}$ have finite cohomologial dimension.
    \end{enumerate}
    In this case, the balanced dualizing complex $R_A$ is given by $R_A= \Rtorfunct{A}(A)'$.
\end{thm}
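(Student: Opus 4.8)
The plan is to run the argument of \cite[Theorem 6.3]{vandenbergh1997existence}, which treats connected graded algebras, and to check that each step extends to the merely locally finite case. The tools are at hand: since $A$ is noetherian it is in particular right coherent, hence satisfies the condition (EF), so by \cref{rmk:EF condition} the functor $\Rtorfunct{A}$ commutes with direct limits, vanishes in positive degrees on torsion modules, and satisfies graded local duality, i.e.\ $\RHom_A(M,\Rtorfunct{A}(A)')\simeq\Rtorfunct{A}(M)'$ functorially for $M\in D^b(\gr(A))$, and symmetrically over $A^{\op}$. One works throughout with complexes of $A^{\en}$-modules, using that $\torfunct{A}$ (resp.\ $\torfunct{A^{\op}}$) touches only the right (resp.\ left) module structure, and that the Matlis dual $M\mapsto M'$ is an exact anti-equivalence on locally finite modules, with $M''\simeq M$, that interchanges the two sides of a bimodule.

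\emph{($\Rightarrow$).} Let $R$ be a balanced dualizing complex. First I would identify $R\simeq\Rtorfunct{A}(A)'$: since the cohomologies of $R$ are finitely generated over $A^{\op}$, the functor $\RHom_{A^{\op}}(R,-)$ commutes with the filtered colimit defining $\torfunct{A}$, so the reflexivity axiom $A\xrightarrow{\sim}\RHom_{A^{\op}}(R,R)$ and the balancedness $\Rtorfunct{A}(R)\simeq A'$ give $\Rtorfunct{A}(A)\simeq\RHom_{A^{\op}}(R,\Rtorfunct{A}(R))\simeq\RHom_{A^{\op}}(R,A')\simeq(A\otimes_{A^{\op}}R)'\simeq R'$, and dualizing yields the claim. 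Combined with graded local duality this gives $\RHom_A(M,R)\simeq\Rtorfunct{A}(M)'$ for $M\in D^b(\gr(A))$, and symmetrically over $A^{\op}$. Since $R$ is bounded of finite injective dimension over $A$, for each module $M$ the complex $\RHom_A(M,R)$ has cohomology in a band of homological degrees independent of $M$; hence so does $\Rtorfunct{A}(M)=\RHom_A(M,R)'$, i.e.\ $\torfunct{A}$ (and likewise $\torfunct{A^{\op}}$) has finite cohomological dimension. For $\chi$, fix $M\in\gr(A)$, set $N:=\RHom_A(M,R)\in D^b(\gr(A^{\op}))$, and combine the reflexivity isomorphism $M\simeq\RHom_{A^{\op}}(N,R)$, the adjunction $\RHom_A(A_0,\RHom_{A^{\op}}(N,R))\simeq\RHom_{A^{\op}}(N,\RHom_A(A_0,R))$, and $\RHom_A(A_0,R)\simeq(\Rtorfunct{A}(A_0))'\simeq A_0'$ (as $A_0=A/A_{\geq 1}$ is torsion) to obtain $\RHomint_A(A_0,M)\simeq\RHomint_{A^{\op}}(N,A_0')$. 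A bounded-above resolution of $N$ by finitely generated graded projective $A^{\op}$-modules (available because $A^{\op}$ is noetherian and $N$ has bounded finitely generated cohomology) then shows each $\Extint^i_A(A_0,M)$ is supported in finitely many internal degrees, hence bounded; $\chi^{\op}$ is symmetric.

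\emph{($\Leftarrow$).} Assume (1) and (2) and set $R_A:=\Rtorfunct{A}(A)'$; it lies in $D(\Gr(A^{\en}))$ because $\torfunct{A}$ preserves the $A$-$A$-bimodule structure of $A$ and the Matlis dual swaps sides. By (2) applied to $A$, $R_A$ is bounded; and by graded local duality $\RHom_A(M,R_A)\simeq\Rtorfunct{A}(M)'$ has cohomology only in homological degrees between $-d$ and $0$ for every module $M$, where $d$ is the cohomological dimension of $\torfunct{A}$ --- uniform boundedness above being exactly the statement that $R_A$ has finite injective dimension over $A$. It remains to establish, following Van den Bergh: (i) the left-right symmetry $\Rtorfunct{A}(A)\simeq\Rtorfunct{A^{\op}}(A)$ in $D(\Gr(A^{\en}))$ --- his key lemma, and the point at which the $\chi$- and $\chi^{\op}$-conditions genuinely enter the construction --- which upgrades the previous sentence to finite injective dimension over $A^{\op}$ as well; (ii) that the cohomologies $(\R^j\torfunct{A}(A))'$ of $R_A$ are finitely generated over $A$ and $A^{\op}$, a consequence of $\chi$ and $\chi^{\op}$ via the standard reformulations of the $\chi$-condition (cf.\ \cite{artin1994noncommutative}); (iii) the reflexivity isomorphisms $A\xrightarrow{\sim}\RHom_A(R_A,R_A)$ and $A\xrightarrow{\sim}\RHom_{A^{\op}}(R_A,R_A)$, obtained from (i), (ii) and the finite injective dimension, with the $\chi$-condition forcing the natural biduality map to become an isomorphism after applying $\Rtorfunct{A}$; and (iv) balancedness $\Rtorfunct{A}(R_A)\simeq A'$ (and $\Rtorfunct{A^{\op}}(R_A)\simeq A'$ by (i)), verified by computing $\Rtorfunct{A}(\Rtorfunct{A}(A)')$ directly from the colimit formula $\torfunct{A}(-)=\varinjlim_n\Homint(A/A_{\geq n},-)$ and the fact that $A'$ is itself torsion, hence $\Rtorfunct{A}$-acyclic.

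\emph{Main obstacle.} The weight of the proof falls on the converse, in (i) and (iii): Van den Bergh establishes the left-right symmetry and reflexivity using that $A_0=k$ is a field, so that $A_0$-modules are free and minimal graded injective resolutions are transparent. In the locally finite setting one must instead use that $A_0$ is a finite-dimensional $k$-algebra, so that finitely generated $A_0$-modules have finite length, and re-run the filtration argument and the injective-resolution bookkeeping with this substitution; the manipulations with two-sided structures and with hyper-$\Ext$ spectral sequences are the remaining, purely routine, friction.
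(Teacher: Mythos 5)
Your proposal is correct and follows essentially the same route as the paper: the paper's own proof simply observes that Van den Bergh's argument (and, for the "only if" direction, Yekutieli--Zhang's, whose key input is \cite[Theorem 4.18]{yekutieli1992dualizing}) carries over verbatim to the locally finite case once Ext-finiteness is replaced by the condition (EF), citing \cite[Proof of Lemma 3.5]{reyes2014skew}. You identify exactly the same substitution (noetherian $\Rightarrow$ right coherent $\Rightarrow$ (EF), hence $\Rtorfunct{A}$ commutes with direct limits and local duality holds) and merely spell out in more detail the steps the paper delegates to the references.
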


\begin{proof}
    \label{proof:balanced dualizing complex}
    In \cite{vandenbergh1997existence}, the author proved the existence of a balanced dualizing complex for a noetherian connected graded $k$-algebra.
    However, one can check that the proof works for a noetherian locally finite $\Nbb$-graded $k$-algebra as stated in \cite[Proof of Lemma 3.5]{reyes2014skew}.
    In particular, Ext-finiteness of a connected graded $k$-algebra is replaced by the condition (EF) for a locally finite $\Nbb$-graded $k$-algebra.
    See also \cite{wu2001dualizing} or \cite{chan2002pre}.
    To be sure, we provide further details below.
    In \cite[Theorem 4.2 (3)]{yekutieli1997serre}, the authors proved the conditions (1) and (2) from the existence of a balanced dualizing complex for a connected graded $k$-algebra.
    In their proof, the key theorem is \cite[Theorem 4.18]{yekutieli1992dualizing}.
    The proof of \cite[Theorem 4.18]{yekutieli1992dualizing} works for a locally finite $\Nbb$-graded $k$-algebra and we can prove the conditions (1) and (2) from the existence of a balanced dualizing complex for a locally finite $\Nbb$-graded $k$-algebra in the same way.
\end{proof}

For an abelian category $\abcatname{C}$, the \emph{global dimension} $\gldim(\abcatname{C})$ of $\abcatname{C}$ is the maximal integer $n$ such that there exist $X,Y \in \abcatname{C}$ with $\Ext^n(X,Y) \neq 0$.

In the same way as in \cref{thm:balanced dualizing complex}, we can prove \cite[Theorem A.4]{de2004ideal} for a noetherian locally finite $\Nbb$-graded $k$-algebra (see also \cite[Proof of Theorem 4.12]{minamoto2011structure}).

\begin{thm}[cf. {\cite[Theorem A.4]{de2004ideal}}]
    Let $A$ be a noetherian locally finite $\Nbb$-graded $k$-algebra.
    We assume that $A$ has a balanced dualizing complex $R_A$ and the global dimension of $\qgr (A)$ is finite.
    Then, the functor 
    \begin{align*}
        \Scal_A: D^b(\qgr (A)) &\rightarrow D^b(\qgr (A)), \\
        \pi_A(M) &\mapsto \pi_A(M \otimes_A^{\LD} R_A)[-1]
    \end{align*}
    is the Serre functor for $D^b(\qgr (A))$.
\end{thm}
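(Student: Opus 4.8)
The theorem is \cite[Theorem~A.4]{de2004ideal}, stated and proved there for noetherian \emph{connected} graded algebras, and the plan is to reproduce that argument in the locally finite setting, systematically replacing the use of Ext-finiteness by the condition (EF) --- which holds automatically here because $A$ has a balanced dualizing complex --- exactly as in the proof of \cref{thm:balanced dualizing complex} and in \cite[Proof of Theorem~4.12]{minamoto2011structure}. First I would set the stage. Since $A$ has a balanced dualizing complex it satisfies $\chi$ and $\chi^{\mathrm{op}}$ (\cref{thm:balanced dualizing complex}), hence $\qgr(A)$ is Ext-finite and $D^b(\qgr(A))$ is a $\Hom$-finite $k$-linear triangulated category in which ``Serre functor'' is meaningful. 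As $\pi_A\colon D^b(\gr(A))\to D^b(\qgr(A))$ is essentially surjective, it suffices to exhibit, naturally in $M,N\in D^b(\gr(A))$, an isomorphism of complexes of $k$-vector spaces
\[
\RHom_{D(\qgr(A))}(\pi_A M,\pi_A N)^{\ast}\ \simeq\ \RHom_{D(\qgr(A))}\big(\pi_A N,\ \pi_A(M\otimes_A^{\LD}R_A)[-1]\big),\qquad (C^{\ast})^{i}:=(C^{-i})^{\ast},
\]
whence $\coho^{0}$ gives the Serre property (and the graded $\Ext$-version comes along); one then checks separately that $\Scal_A$ is an autoequivalence.

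For the pairing I would use the derived $(\pi_A,\omega_A)$-adjunction (and exactness of $\pi_A$) to rewrite the left-hand $\RHom$ as $\RHom_{\Gr(A)}(M,\R\Q_A N)$ with $\R\Q_A=\R\omega_A\circ\pi_A$, and then feed the fundamental exact triangle $\Rtorfunct{A}(N)\to N\to\R\Q_A(N)\xrightarrow{+1}$ (for $N$, and for $M\otimes_A^{\LD}R_A$) into $\RHom_{\Gr(A)}(M,-)$ and $\RHom_{\Gr(A)}(N,-)$ respectively. This reduces the claim to two natural isomorphisms, compatible with the triangle maps,
\[
\RHom_{\Gr(A)}(M,\Rtorfunct{A}N)^{\ast}\simeq\RHom_{\Gr(A)}\big(N,\,M\otimes_A^{\LD}R_A\big),\qquad \RHom_{\Gr(A)}(M,N)^{\ast}\simeq\RHom_{\Gr(A)}\big(N,\,\Rtorfunct{A}(M\otimes_A^{\LD}R_A)\big),
\]
which are to come from graded local duality $\Rtorfunct{A}(X)\simeq\RHomint_A(X,R_A)'$ (valid under (EF); cf.\ the remark after \cref{def:EF condition}), Matlis duality $\RHomint_A(X,Y')\simeq(X\otimes_A^{\LD}Y)'$, tensor--hom adjunction, the dualizing-complex identities $\RHom_A(R_A,R_A)\simeq A\simeq\RHom_{A^{\mathrm{op}}}(R_A,R_A)$, and --- for the second one --- the balancedness $\Rtorfunct{A}(R_A)\simeq A'$. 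The one real subtlety is that the evaluation isomorphisms one is tempted to use (such as $X\otimes_A^{\LD}\RHom_A(Y,-)\simeq\RHom_A(Y,X\otimes_A^{\LD}-)$ and their Matlis duals) are valid only when $Y$ is perfect over $A$, which fails in $\gr(A)$ in general; I would avoid this by first checking both isomorphisms for $M=A(n)$, where they collapse to identities: $\RHom_{D(\qgr(A))}(\Ocal_A(n),\pi_A N)=(\R\Q_A N)_{-n}$ and $\Scal_A\Ocal_A(n)=\pi_A(R_A(n))[-1]$, while $\RHom_{\Gr(A)}(N,\Rtorfunct{A}(R_A(n)))\simeq\RHom_{\Gr(A)}(N,A'(n))\simeq(N_{-n})^{\ast}$ by balancedness and $\RHomint_A(N,A')\simeq N'$ --- so that both cones literally become the $k$-linear dual of the degree-$(-n)$ part of the fundamental triangle for $N$. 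Then I would extend to all $\Mcal\in D^b(\qgr(A))$ using that $\{\Ocal_A(n)\}_{n\in\Zbb}$ generates $D^b(\qgr(A))$ and that both sides are triangulated functors of $\Mcal$ joined by a canonical natural transformation built from the local-duality maps; the shift $[-1]$ in $\Scal_A$ is precisely the rotation produced at this last step.

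It remains to see that $\Scal_A=\pi_A(-\otimes_A^{\LD}R_A)[-1]$ is an autoequivalence. First, $-\otimes_A^{\LD}R_A$ sends torsion objects to torsion objects, so $\Scal_A$ descends to $D^b(\qgr(A))$ --- this is the usual consequence of $\Rtorfunct{A}$ having finite cohomological dimension and commuting with direct limits under (EF). Here the hypothesis $\gldim\qgr(A)<\infty$ becomes essential: it forces $D^b(\qgr(A))=\Perf(\QGr(A))$, which both legitimizes the generation statement used above (the ample sequence $\{\Ocal_A(n)\}_{n\ll0}$, ampleness holding by $\chi$, is a set of compact generators of $D(\QGr(A))$, hence generates all of $D^b(\qgr(A))$ once this equals the perfect complexes) and, since every object is now perfect, lets the identities $\RHom_A(R_A,R_A)\simeq A\simeq\RHom_{A^{\mathrm{op}}}(R_A,R_A)$ supply a quasi-inverse to $-\otimes_A^{\LD}R_A$ on $D^b(\qgr(A))$ --- even though this functor is \emph{not} an autoequivalence of $D(\Gr(A))$ in general; alternatively, once the pairing above is known to be natural in both variables, $\Scal_A$ is an equivalence by the standard uniqueness-of-Serre-functors formalism.

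I expect the main obstacle to be precisely this last point --- establishing that $\pi_A(-\otimes_A^{\LD}R_A)$ genuinely becomes invertible on $D^b(\qgr(A))$, which is where the geometric hypothesis $\gldim\qgr(A)<\infty$ must actually be used rather than being a formal consequence of the dualizing complex --- together with the line-by-line check that the inputs from \cite{vandenbergh1997existence}, \cite{yekutieli1992dualizing}, \cite{yekutieli1997serre} (local duality, $\R^{i}\torfunct{A}$ vanishing on torsion modules, and $\RHom_A(R_A,R_A)\simeq A$), all proved there for connected graded algebras, carry over verbatim under (EF) as for \cref{thm:balanced dualizing complex}. The pairing itself is, by contrast, a diagram chase once those inputs are available.
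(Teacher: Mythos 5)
Your proposal is correct and matches the paper's approach: the paper offers no proof beyond the remark that \cite[Theorem A.4]{de2004ideal} carries over to locally finite algebras "in the same way as in" the proof of \cref{thm:balanced dualizing complex}, i.e.\ by replacing Ext-finiteness with the condition (EF), which is exactly your plan. Your additional fleshing-out of the de Naeghel--Van den Bergh argument (local duality, the triangle $\Rtorfunct{A}(N)\to N\to \R Q_A(N)$, reduction to the generators $\Ocal_A(n)$, and the role of $\gldim\qgr(A)<\infty$) is consistent with the cited proof and goes beyond what the paper itself records.
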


An \emph{algebraic triple} consists of a $k$-linear category $\abcatname{C}$, an object $\Ocal \in \abcatname{C}$ and a $k$-linear auto equivalence $s$ of $\abcatname{C}$.
In this case, we also say that $(\Ocal, s)$ is an \emph{algebraic pair} for $\abcatname{C}$.
For two algebraic triples $(\abcatname{C}, \Ocal, s), (\abcatname{C}', \Ocal', s')$, a morphism of algebraic triples $(F, \theta, \mu):(\abcatname{C}, \Ocal, s) \rightarrow (\abcatname{C}', \Ocal', s')$ consists of a $k$-linear functor $F: \abcatname{C} \rightarrow \abcatname{C}'$, an isomorphism $\theta: F(\Ocal) \rightarrow \Ocal'$ and a natural isomorphism $\mu: F \circ s \rightarrow s' \circ F$.
We say that $(F, \theta, \mu)$ is an \emph{isomorphism} if $F$ is an equivalence of categories.
For an algebraic triple $(\abcatname{C}, \Ocal, s)$, we define a $\Zbb$-graded $k$-algebra $B(\abcatname{C}, \Ocal, s)$ by 
\[
B(\abcatname{C}, \Ocal, s):= \bigoplus_{n \in \Zbb} \Hom_\abcatname{C}(\Ocal, s^n(\Ocal)).
\]

In this paper, the main example of algebraic triples is $(\qgr (A), \Ocal_A, (1)_A)$, where $A$ is a noetherian locally finite $\Nbb$-graded $k$-algebra, $\Ocal_A=\pi_A(A)$ and $(1)_A$ is the degree shift functor.
\begin{dfn}[{\cite[Definition 3.3, 3.4]{mori2021acategorical}}]
\label{def:bimodules}
Let $\abcatname{C}$ be an abelian category.
A \emph{bimodule} $\abobjname{M}$ over $\abcatname{C}$ is an adjoint pair of functors from $\abcatname{C}$ to itself with suggestive notation
\[
\abobjname{M} = (-\otimes \abobjname{M}, \Hom(\abobjname{M}, -)).
\]
A bimodule $\abobjname{M}$ is called \emph{invertible} if $-\otimes \abobjname{M}$ is an equivalence of $\abcatname{C}$.
An invertible bimodule $\abobjname{M}$ is called a \emph{canonical bimodule} for $\abcatname{C}$ if there exists $n \in \Zbb$ such that the induced autoequivalence $-\otimes \abobjname{M}[n]$ of $D^b(\abcatname{C})$ is a Serre functor for $D^b(\abcatname{C})$.
We denote the canonical bimodule by $\canmod_\abcatname{C}$.
When $\abcatname{C} = \qgr (A)$ for a noetherian locally finite $\Nbb$-graded $k$-algebra $A$, we often write the canonical bimodule simply by $\canmod_A$.
\end{dfn}


\begin{rmk}[See also {\cite[Remark 3.5]{mori2021acategorical}}]
\label{rmk:canonical bimodule}
If $\abcatname{C}$ has a canonical bimodule and $- \otimes \canmod_\abcatname{C} [n]$ is the Serre functor for $D^b(\abcatname{C})$, then $\gldim(\abcatname{C}) = n < \infty$.
\end{rmk}

In our study, we need to consider noncommutative projective schemes associated to $\Nbb^2$-graded algebras.
Let $C:= \bigoplus_{i,j \geq 0}C_{i,j} $ be an $\Nbb^2$-graded $k$-algebra.
We introduce notions related to $\Nbb^2$-graded algebras and noncommutative projective schemes associated to $\Nbb^2$-graded algebras following \cite{ballard2021kernels}.
$C$ is called \emph{connected} if $C_{0,0}=k$.
$C$ is called \emph{locally finite} if $C_{i,j}$ is a finite-dimensional $k$-vector space for all $i,j \in \Nbb$.
We denote by $\BiGr(C)$ the category of $\Zbb^2$-graded right $C$-modules.
We often say graded for $\Zbb$-graded and bigraded for $\Zbb^2$-graded.
Let $M \in \BiGr(C)$.
We denote by $C^{\op}$ the opposite algebra of $C$ and define the \emph{enveloping algebra} $C^{\en} = C \otimes_k C^{\op}$.
For $n_1,n_2 \in \Zbb$, we define the truncation $M_{\geq n_1, \geq n_2}$ (respectively, $M_{\leq n_1, \leq n_2}$) by $M_{\geq n_1, \geq n_2} = \bigoplus_{i \geq n_1, j \geq n_2} M_{i,j}$ (respectively, $M_{\leq n_1, \leq n_2} = \bigoplus_{i \leq n_1, j \leq n_2} M_{i,j}$) and the shift $M(n_1, n_2)$ by $M(n_1, n_2)_{i,j} = M_{i+n_1, j+n_2}$.

Our main example of $\Nbb^2$-graded $k$-algebras is the tensor product $A \otimes_k B$ of two locally finite $\Nbb$-graded $k$-algebras $A,B$.
Note that when $A,B$ are noetherian locally finite $\Nbb$-graded $k$-algebras, then in general, $A \otimes_k B$ is not noetherian, but only finitely generated as a $k$-algebra (cf. \cite[Remark 15.1.30]{yekutieli2019derived}).
We have a canonical way to produce graded $A$-modules and graded $B$-modules from a bigraded $A \otimes_k B$-module: For fixed $u,v \in \Zbb$,
\begin{align*}
    (-)_{*, v}:\BiGr(A \otimes_k B) \rightarrow \Gr(A), \quad
    M \mapsto M_{*,v}:= \bigoplus_{i \in \Nbb} M_{i,v}, \\
    (-)_{u, *}:\BiGr(A \otimes_k B) \rightarrow \Gr(B), \quad
    M \mapsto M_{u,*}:= \bigoplus_{j \in \Nbb} M_{u,j}.
\end{align*}
Notice that the forgetful functors $\U_A, \U_B$ also induces the functors $\U_A, \U_B$ from $\BiGr(A \otimes_k B)$ to $\Gr(A), \Gr(B)$, respectively:
\begin{align*}
    \U_A:\BiGr(A \otimes_k B) \rightarrow \Gr(A), \quad
    M \mapsto \U_A(M) := \bigoplus_{u \in \Zbb} M_{u,*}, \\
    \U_B:\BiGr(A \otimes_k B) \rightarrow \Gr(B), \quad
    M \mapsto \U_B(M) := \bigoplus_{v \in \Zbb} M_{*,v}.
\end{align*}
If $A,B$ are finitely generated, then we also have functors:
\begin{align*}
    \Q_A':\BiGr(A \otimes_k B) \rightarrow \BiGr(A \otimes_k B), \quad 
    M \mapsto \Q_A'(M) := \bigoplus_{v \in \Zbb} \Q_A(M_{*,v}), \\
    \Q_B':\BiGr(A \otimes_k B) \rightarrow \BiGr(A \otimes_k B), \quad
    M \mapsto \Q_B'(M) := \bigoplus_{u \in \Zbb} \Q_B(M_{u,*}).
\end{align*}
Note that for a bigraded module $M$, we have the following isomorphisms of graded $A$-modules and graded $B$-modules (\cite[Lemma 3.17]{ballard2021kernels}):
\begin{align*}
    \Q_A'(M) \simeq \Q_A \circ \U_A(M), \quad
    \Q_B'(M) \simeq \Q_B \circ \U_B(M).
\end{align*}
In particular, $\Q_A \circ \U_A(M)$ and $\Q_B \circ \U_B(M)$ have bigraded $A \otimes_k B$-module structures.
A similar argument holds for $\tau_A$ and $\tau_B$.

Let $A, B$ be finitely generated locally finite $\Nbb$-graded $k$-algebras and $M$ be a bigraded $A \otimes_k B$-module.
Then, $m \in M$ is called \emph{torsion} if there exist $n_1, n_2 \in \Zbb$ such that $m (A \otimes_k B)_{\geq n_1, \geq n_2} =0$.
We call $M \in \BiGr(A \otimes_k B)$ a \emph{torsion module} if every element of $M$ is torsion.
We denote by $\Tor(A \otimes_k B)$ the full subcategory of $\BiGr(A \otimes_k B)$ consisting of torsion modules.

\begin{lem}[cf. {\cite[Lemma 3.19]{ballard2021kernels}}]
    \label{lem:tor is Serre}
    $\Tor(A \otimes_k B)$ is a Serre subcategory of $\BiGr(A \otimes_k B)$.
\end{lem}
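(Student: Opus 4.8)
To show that $\Tor(A \otimes_k B)$ is a Serre subcategory of $\BiGr(A \otimes_k B)$, I need to verify that it is closed under subobjects, quotients, and extensions. The first two are essentially formal: if $M$ is torsion and $N \subseteq M$ is a bigraded submodule, then every element of $N$ is already an element of $M$, hence annihilated by some $(A\otimes_k B)_{\geq n_1,\geq n_2}$, so $N$ is torsion; and if $M \twoheadrightarrow Q$ is a quotient, then for $\bar m \in Q$ lift to $m \in M$, and the annihilating two-sided tail ideal for $m$ also kills $\bar m$, so $Q$ is torsion. The only point requiring care is the extension axiom, which is where the $\Nbb^2$-grading (as opposed to the $\Nbb$-grading) and the fact that $A \otimes_k B$ need only be finitely generated (not noetherian) make the argument slightly less automatic than in the classical one-variable case.

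\textbf{Main step: closure under extensions.} Suppose
\[
0 \longrightarrow M' \longrightarrow M \longrightarrow M'' \longrightarrow 0
\]
is a short exact sequence in $\BiGr(A \otimes_k B)$ with $M', M''$ torsion; I must show $M$ is torsion. Fix a homogeneous element $m \in M$ of bidegree $(u,v)$, and let $\bar m$ be its image in $M''$. Since $M''$ is torsion there exist $n_1, n_2$ with $\bar m \,(A\otimes_k B)_{\geq n_1, \geq n_2} = 0$, i.e.\ $m\,(A\otimes_k B)_{\geq n_1,\geq n_2} \subseteq M'$. Now $A \otimes_k B$ is finitely generated as a $k$-algebra, say by homogeneous elements $x_1,\dots,x_r$ of bidegrees $(a_s,b_s)$; consequently each bihomogeneous component $(A\otimes_k B)_{i,j}$ is a finite-dimensional $k$-vector space (this is the locally finite hypothesis, which also follows from finite generation), and the finitely many products $m x_{s_1}\cdots x_{s_t} \in M'$ needed to reach the ``corner'' degrees $(u+n_1, v+n_2)$ span a finite-dimensional subspace of $M'$. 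Because $M'$ is torsion and this subspace is finite-dimensional, there exist $n_1', n_2'$ annihilating all of it: each generator is killed by some tail ideal, and one takes the (finite) maximum of the relevant cutoffs in each of the two gradings. Setting $N_1 = n_1 + n_1'$ and $N_2 = n_2 + n_2'$, one checks that $m\,(A\otimes_k B)_{\geq N_1, \geq N_2} = 0$: any product $m\xi$ with $\xi$ of sufficiently large bidegree factors through one of the corner elements $mx_{s_1}\cdots x_{s_t}$ followed by a further multiplication, which then lands in the annihilated finite-dimensional piece. Hence $m$ is torsion, and since $M$ is generated by its bihomogeneous elements, $M$ is a torsion module.

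\textbf{The main obstacle.} The delicate point is the bookkeeping in the extension step: unlike the $\Nbb$-graded case where ``$m$ killed by $A_{\geq n}$'' is a single condition, here one must simultaneously control \emph{both} grading directions, and the passage from ``$m \cdot (\text{tail ideal}) \subseteq M'$'' to ``$m \cdot (\text{bigger tail ideal}) = 0$'' relies on the finitely many products $m x_{s_1} \cdots x_{s_t}$ that generate $m\,(A\otimes_k B)_{\geq n_1,\geq n_2}$ as an $(A\otimes_k B)$-module lying in $M'$, together with local finiteness to ensure there are only finitely many of them in each bounded range. This is exactly the argument of \cite[Lemma 3.19]{ballard2021kernels}, adapted to allow $A_0, B_0$ to be arbitrary finite-dimensional $k$-algebras rather than $k$; no essentially new idea is needed, and the proof reduces to carefully tracking the two cutoffs. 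Alternatively, one can deduce the statement directly from \cite[Lemma 3.19]{ballard2021kernels} if the hypotheses there already cover the locally finite case, in which case the proof is a one-line citation.
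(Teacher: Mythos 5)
Your proof is correct and is essentially the same argument the paper relies on: the paper's proof is a one-line citation of \cite[Lemma 3.19]{ballard2021kernels} with the remark that the connected hypothesis there can be replaced by local finiteness, and your extension step (factor any sufficiently large bidegree through a finite ``corner box'' using finite generation of $A$ and $B$, then kill the finite-dimensional image in $M'$ by a common tail ideal using local finiteness) is exactly that argument written out. One minor slip: your parenthetical claim that local finiteness ``also follows from finite generation'' is false in general (e.g.\ a polynomial generator in degree $0$), but this is harmless since local finiteness is a standing hypothesis on $A$ and $B$.
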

\begin{proof}
    \label{proof:tor is Serre}
    In \cite[Lemma 3.19]{ballard2021kernels}, the author proved the lemma for connected $\Nbb^2$-graded $k$-algebras.
    We can prove the lemma for locally finite $\Nbb^2$-graded $k$-algebras by the same argument.
\end{proof}

We denote by $\QBiGr(A \otimes_k B)$ the quotient category of $\BiGr(A \otimes_k B)$ by $\Tor(A \otimes_k B)$.
We also denote by $\pi_{A \otimes_k B}$ the projection functor and $\omega_{A \otimes_k B}$ the right adjoint of $\pi_{A \otimes_k B}$.
We also denote the composition $\Q_{A \otimes_k B}:=\omega_{A \otimes_k B} \circ \pi_{A \otimes_k B}$.

\section{Fourier-Mukai functors in noncommutative projective geometry}
\label{sec:FM in ncproj}

Let $A$ be an $\Nbb$-graded $k$-algebra.
We denote by $\Cdg(\Gr(A))$ the dg category of complexes of graded right $A$-modules.
We associate to a $\Zbb$-graded $k$-algebra $A$ the category $\dgcatname{A}$ with objects the integers $\Zbb$ and morphisms given by $\Hom_{\dgcatname{A}}(i,j) = A_{j-i}$.
We naturally regard $\dgcatname{A}$ as a dg category.
Then, we have an equivalence of dg categories 
\[
\Cdg(\Gr(A)) \simeq \dgmod(\dgcatname{A})
\] 
from \cite[Lemma 3.50]{ballard2021kernels}.
A similar argument holds for the $\Zbb^2$-graded case.
Let $M \in \Gr(A)$ and $N \in \Gr(A^{\op})$.
If we think of $M,N$ as right and left dg $\dgcatname{A}$-modules by using the above equivalence, respectively, then the tensor product $M \otimes_{\dgcatname{A}} N$ of dg modules is the same as the $\Zbb$-algebra tensor product (see also \cite[Section 4.1]{mori2021local}).
In this case, we denote by $\Mod(\dgcatname{A})$ the abelian category of right $\dgcatname{A}$-modules.
We also denote by $\Tor(\dgcatname{A})$ the full subcategory of $\Mod(\dgcatname{A})$ consisting of torsion modules.
$\Tor(\dgcatname{A})$ is a Serre subcategory of $\Mod(\dgcatname{A})$ and we denote by $\QMod(\dgcatname{A})$ the quotient category of $\Mod(\dgcatname{A})$ by $\Tor(\dgcatname{A})$.
It is known that $\Gr(A), \Tor(A)$ and $\QGr (A)$ are equivalent to $\Mod(\dgcatname{A}), \Tor(\dgcatname{A})$ and $\QMod(\dgcatname{A})$, respectively (\cite[Page 885,886]{lunts2010uniqueness} or \cite[Lemma 2.17]{mori2023categorical}).
We also note that we have the equivalence
\[
D(\QGr (A)) \simeq D(\dgcatname{A})/D_{\Tor}(\dgcatname{A}),
\]
where $D_{\Tor}(\dgcatname{A}) \subset D(\dgcatname{A})$ is the full subcategory consisting of the objects whose cohomologies are in $\Tor(\dgcatname{A})$ (\cite[Lemma 7.2]{lunts2010uniqueness}).
Let $D_{\dg}(\QGr (A)) := C_{\dg}(\QGr (A)) / \Acydg (\QGr (A))$, where $\Acydg (\QGr (A))$ is the dg subcategory of acyclic complexes in $C_{\dg}(\QGr (A))$. 
Then, we have the quasi-equivalences of dg categories
\[
    D_{\dg}(\QGr (A)) \simeq D_{\dg}(\dgcatname{A})/D_{\Tor, \dg}(\dgcatname{A}),
\]
where $D_{\Tor, \dg}(\dgcatname{A})$ is the dg subcategory of $D_{\dg}(\dgcatname{A})$ consisting of the objects whose cohomologies are in $\Tor(\dgcatname{A})$.
We define $\Perfdg(\QGr (A))$ to be the full dg subcategory of $D_{\dg}(\QGr (A))$ consisting of the compact objects in $H^0(D_{\dg}(\QGr (A)))$.

\begin{dfn}[{\cite[Definition A.1]{Bondal2001reconstruction}, \cite[Definition 9.4]{lunts2010uniqueness}}]
    \label{def:BO-ample}
    Let $\abcatname{C}$ be a $k$-linear exact category.
    Let $\{\abobjname{P}_i\}_{i \in \Zbb}$ be a sequence of objects in $\abcatname{C}$.
    Then, the sequence $\{\abobjname{P}_i\}_{i \in \Zbb}$ is called \emph{BO (Bondal-Orlov)-ample} if there exists an exact embedding $\abcatname{C} \subset \abcatname{C}'$ in an abelian category $\abcatname{C}'$ such that 
    \begin{enumerate}
        \item a morphism in $\abcatname{C}$ is an admissible epimorphism if and only if it is an epimorphism in $\abcatname{C}'$ (we call this condition (EPI)),
        \item for every $\abobjname{M} \in \abcatname{C'}$, there exists $n_0$ such that for all $n < n_0$, the following hold:
        \begin{enumerate}[label=(\alph*)]
        \item there is an epimorphism $\abobjname{P}_n^{\oplus m_n} \rightarrow \abobjname{M}$ for some $m_n \in \Nbb$,
        \item $\Ext^i_\abcatname{C'}(\abobjname{P}_n, \abobjname{M})= 0$ for all $i > 0$,
        \item $\Hom_\abcatname{C'}(\abobjname{M}, \abobjname{P}_n)=0$.
        \end{enumerate}
    \end{enumerate}
    \end{dfn}

    Let $\Ddg(\BiGr(A^{\op} \otimes_k B))$ be a dg enhancement of $D(\QBiGr(A \otimes_k B))$.
    The following theorem is essential for our study.
    \begin{thm}[cf. {\cite[Theorem 4.15]{ballard2021kernels}}]
        \label{thm:dg morita}
        Let $A,B$ be noetherian locally finite $\Nbb$-graded $k$-algebras.
        If $A,B$ have balanced dualizing complexes, then we have an equivalence of dg categories
        \begin{align*}
            \Ddg(\QBiGr(A^{\op} \otimes_k B)) &\overset{\sim}{\longrightarrow} \RHomint_c (\Ddg(\QGr (A)), \Ddg(\QGr (B))), \\
            \qgrobjname{E} &\longmapsto \Phi^{\dg}_{\qgrobjname{E}} (-) := \pi_B(\Romega_A(-)\otimes_{\Acal}^{\LD} \Romega_{A^{\op}\otimes_k B}(\qgrobjname{E})),
        \end{align*}
        where $\RHomint_c$ denotes the dg category formed by the direct sums preserving quasi-functors.
    \end{thm}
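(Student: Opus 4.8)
The plan is to follow the proof of \cite[Theorem~4.15]{ballard2021kernels}, which establishes the statement for connected graded $k$-algebras; its categorical content is insensitive to the passage from connected graded to locally finite, so the only genuinely new work will be to verify that the finiteness inputs it uses survive — which is the role of \cref{thm:balanced dualizing complex}, \cref{rmk:EF condition} and \cref{lem:tor is Serre}. The first step is to rephrase everything in terms of dg-module categories: via the equivalences $\Cdg(\Gr(A)) \simeq \dgmod(\dgcatname{A})$ and $\Cdg(\BiGr(A^{\op}\otimes_k B)) \simeq \dgmod(\dgcatname{A}^{\op}\otimes\dgcatname{B})$, together with the identifications $\Ddg(\QGr(A)) \simeq \Ddg(\dgcatname{A})/D_{\Tor, \dg}(\dgcatname{A})$ recorded above (and their analogues for $B$ and for $A^{\op}\otimes_k B$, the last of these resting on \cref{lem:tor is Serre}), the assertion becomes a statement about dg (Drinfeld) quotients of derived categories of dg-module categories. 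It is essential here to work with $\Mod$, $\QMod$ and the dg category $\dgcatname{A}^{\op}\otimes\dgcatname{B}$ rather than with $\gr$, $\qgr$, since $A^{\op}\otimes_k B$ is in general only finitely generated as a $k$-algebra and need not be noetherian.

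Next I would invoke To\"en's derived Morita theorem \cite{toen2007homotopy}, in its dg-enhanced form (cf.\ \cite{cananoco2015internal}, \cite{genovese2017adjunctions}): for dg categories there is a natural quasi-equivalence
\[
\Ddg(\dgcatname{A}^{\op}\otimes\dgcatname{B}) \overset{\sim}{\longrightarrow} \RHomint_c\bigl(\Ddg(\dgcatname{A}),\Ddg(\dgcatname{B})\bigr), \qquad \qgrobjname{F}\longmapsto\bigl(-\otimes^{\LD}_{\dgcatname{A}}\qgrobjname{F}\bigr),
\]
and then descend it along the torsion-localizations on both sides. On the source, by the universal property of the dg quotient \cite{drinfeld2004dgquotients}, a direct-sum-preserving quasi-functor out of $\Ddg(\dgcatname{A})/D_{\Tor, \dg}(\dgcatname{A})$ is the same datum as one out of $\Ddg(\dgcatname{A})$ that annihilates $D_{\Tor, \dg}(\dgcatname{A})$; on the target, $\pi_B\colon\Ddg(\dgcatname{B})\to\Ddg(\dgcatname{B})/D_{\Tor, \dg}(\dgcatname{B})$ is a coproduct-preserving localization whose right adjoint $\Romega_B$ is fully faithful. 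Combining these, the objects of $\RHomint_c(\Ddg(\dgcatname{A}),\Ddg(\dgcatname{B}))$ that descend to $\RHomint_c(\Ddg(\QGr(A)),\Ddg(\QGr(B)))$ correspond, under To\"en's equivalence, exactly to the bimodules that are closed with respect to the bigraded torsion subcategory $\Tor(A^{\op}\otimes_k B)$ — that is, to $\Ddg(\QBiGr(A^{\op}\otimes_k B))$, embedded via the section functor $\Romega_{A^{\op}\otimes_k B}$ — and unwinding this identification together with $\Romega_A$ and $\pi_B$ (using $\Q_A'\simeq\Q_A\circ\U_A$ and its analogue) reproduces the stated formula for $\Phi^{\dg}_{\qgrobjname{E}}$. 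Full faithfulness and essential surjectivity of $\qgrobjname{E}\mapsto\Phi^{\dg}_{\qgrobjname{E}}$ then follow from the corresponding properties of To\"en's equivalence and the universal properties of the two quotients.

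I expect the hard part to be the finiteness bookkeeping that legitimizes this descent, and this is precisely where the balanced dualizing complex hypothesis is used. One needs first that $\Romega_A$ commutes with arbitrary direct sums, for otherwise $\Phi^{\dg}_{\qgrobjname{E}}$ would not land in $\RHomint_c$. Since $\pi_A$ preserves coproducts and $\Romega_A\pi_A\simeq\RQ_A$, and the triangle $\Rtorfunct{A}\to\idfun\to\RQ_A\to$ has both $\idfun$ and $\Rtorfunct{A}$ preserving coproducts — $\Rtorfunct{A}$ because $A$, being noetherian, is right and left coherent and therefore satisfies the condition (EF) (\cref{rmk:EF condition}) — it follows that $\RQ_A$, hence $\Romega_A$, preserves coproducts; consequently $\{\qgrobjname{O}_A(n)\}_{n\in\Zbb}$ is a set of compact generators of $\Ddg(\QGr(A))$, and similarly for $B$ and for the bigraded algebra, so that To\"en's theorem applies to dg models of all three.

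Secondly, and more delicately, the bimodule category produced by To\"en's theorem must be identified with $\Ddg(\QBiGr(A^{\op}\otimes_k B))$; equivalently, the dg endomorphism algebra of the compact generator $\bigoplus_n\qgrobjname{O}_A(n)$ of $\Ddg(\QGr(A))$ must be cohomologically bounded with finite-dimensional cohomology in each internal degree (and likewise for $B$ and for the bigraded algebra). This is exactly what the $\chi$- and $\chi^{\op}$-conditions and the finite cohomological dimension of $\torfunct{A}$ and $\torfunct{A^{\op}}$ guaranteed by \cref{thm:balanced dualizing complex} provide, via local duality. The single point that goes beyond \cite{ballard2021kernels} is that Van den Bergh's construction of balanced dualizing complexes, the local duality theorem, and the (EF)-machinery all remain valid once the hypothesis ``connected graded and Ext-finite'' is weakened to ``locally finite and satisfying (EF)''; this is precisely the content of \cref{thm:balanced dualizing complex} and its proof. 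With these inputs secured, the argument of \cite[Theorem~4.15]{ballard2021kernels} carries over essentially verbatim.
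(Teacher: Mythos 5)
Your proposal takes essentially the same approach as the paper: the paper's own proof is simply the observation that the argument of \cite[Theorem 4.15]{ballard2021kernels} (To\"en-style derived Morita theory descended along the torsion localizations) carries over verbatim from connected graded to locally finite algebras, with Ext-finiteness replaced by the condition (EF) exactly as in the proof of \cref{thm:balanced dualizing complex}. Your write-up correctly identifies both the structure of that argument and the precise finiteness inputs that must be re-verified, so it is a faithful (and more detailed) account of what the paper does.
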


    \begin{proof}
        \label{proof:dg morita}
        In \cite{ballard2021kernels}, the author proved the theorem for noetherian  connected graded $k$-algebras.
        However, the same proof works for noetherian locally finite $\Nbb$-graded $k$-algebras.
       This situation is exactly the same as in the proof of \cref{thm:balanced dualizing complex}.
    \end{proof}

    \begin{rmk}
        \label{rmk:dg morita}
        For noetherian locally finite $\Nbb$-graded $k$-algebras $A,B$, the condition that $A,B$ have balanced dualizing complexes is the same as the condition that the pair $(A,B)$ is a delightful couple in the sense of \cite{ballard2021kernels}.
    \end{rmk}

    \begin{nota}
    \label{notation: associated fm functor}
    Let $A,B$ be as in \cref{thm:dg morita}.
    We denote by $\Phi_{\qgrobjname{E}}$ the associated (Fourier-Mukai) functor between $D(\QGr (A))$ and $D(\QGr (B))$.
    \end{nota}

    Let $A$ be a right noetherian locally finite $\Nbb$-graded $k$-algebra.
    Then, we have the following diagram:
    \[
    \begin{tikzcd}
        D^b(\Locfree(\qgr (A))) \arrow[r, "\iota"] & D^b(\qgr (A)) \arrow[d,"\sim" sloped] & & \Perf(\QGr (A)) \arrow[d, "\sim" sloped] \\
        & D^b_{\qgr (A)}(\QGr (A)) \arrow[r, hook, "i_1"] & D(\QGr (A)) & D(\QGr (A))^c \arrow[l, hook', "i_2" '], \\
    \end{tikzcd}
    \] 
    where $\Locfree(\qgr (A)):=\Perf(\QGr (A)) \cap \qgr (A) \subset \qgr (A)$ is an exact category, the vertical arrows are equivalences (\cite[Lemma 4.3.3]{bondal2002generators}, \cref{eg:perfect dg module}), the horizontal arrows $i_1, i_2$ in the bottom row are fully faithful and $\iota:D^b(\Locfree(\qgr (A))) \rightarrow D^b(\qgr (A))$ is a natural functor.
    Note that if $\gldim(\QGr (A)) < \infty$, then we have an equivalence $D^b(\qgr (A)) \simeq \Perf(\QGr (A))$ (\cite[Lemma 4.3.2]{bondal2002generators}).

    \begin{rmk}
        In \cite{bondal2002generators}, the authors proved \cite[Lemma 4.3.2, Lemma 4.3.3]{bondal2002generators} and \cite[Lemma 4.2.2]{bondal2002generators} (we use this lemma below) only for connected graded $k$-algebras.
        However, the same proofs work for locally finite $\Nbb$-graded $k$-algebras because we can use the results in \cref{rmk:EF condition}.
    \end{rmk}
   
The following lemma is basic.

\begin{lem}
\label{lem:BO-ample 1}
Let $A$ be a right noetherian locally finite $\Nbb$-graded $k$-algebra.
Then, we have the following.
\begin{enumerate}
    \item $\iota:D^b(\Locfree(\qgr (A))) \rightarrow D^b(\qgr (A))$ is fully faithful.
    \item $D^b(\Locfree(\qgr (A)))$ is equivalent to $\Perf(\QGr (A))$.  
\end{enumerate}
\end{lem}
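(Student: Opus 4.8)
\emph{Proof plan.} Write $\Ecal:=\Locfree(\qgr(A))=\Perf(\QGr(A))\cap\qgr(A)$. The plan is to recognise $\Ecal$ as a generating exact subcategory of the noetherian abelian category $\qgr(A)$ which is closed under extensions, kernels of epimorphisms and direct summands, to deduce (1) from the standard derived-category comparison for such a subcategory (this is where \cite[Lemma 4.2.2]{bondal2002generators} enters), and to obtain (2) by combining (1) with the equivalences in the diagram above and the description of $\Perf(\QGr(A))$ as the thick subcategory of $D(\QGr(A))$ generated by the Serre twists $\Ocal_A(n)$. The structural input is the following: since $\qgr(A)\hookrightarrow\QGr(A)\hookrightarrow D(\QGr(A))$ is exact and fully faithful and $\Perf(\QGr(A))=D(\QGr(A))^{c}$ is a thick triangulated subcategory, a short exact sequence $0\to L\to M\to N\to 0$ in $\qgr(A)$ with $L$ and $N$ perfect has $M$ perfect, while the kernel of an epimorphism (resp.\ the cokernel of a monomorphism) between perfect objects of $\qgr(A)$ is again perfect, being a shift of the cone of a morphism of perfect objects; also a retract in $\qgr(A)$ of a perfect object is perfect. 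Hence $\Ecal$ has the stated closure properties and is in particular idempotent complete. Moreover $\Ecal$ generates $\qgr(A)$: each $\Ocal_A(n)=\pi_A(A(n))$ is a compact object of $D(\QGr(A))$ (\cref{eg:perfect dg module}) and therefore lies in $\Ecal$, and for $M\in\qgr(A)$, writing $M\cong\pi_A(N)$ with $N$ generated by homogeneous elements of degrees $d_1,\dots,d_r$ and applying the exact functor $\pi_A$ to a surjection $\bigoplus_{i=1}^{r}A(-d_i)\twoheadrightarrow N$ produces an epimorphism $\bigoplus_{i=1}^{r}\Ocal_A(-d_i)\twoheadrightarrow M$.

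For (1): by the above $\Ecal$ is a resolving generating subcategory of $\qgr(A)$, so every bounded-above complex over $\qgr(A)$ admits an $\Ecal$-resolution and $D^{-}(\Ecal)\to D^{-}(\qgr(A))$ is fully faithful (cf.\ \cite[Lemma 4.2.2]{bondal2002generators}). Since $D^{b}(\Ecal)\hookrightarrow D^{-}(\Ecal)$ and $D^{b}(\qgr(A))\hookrightarrow D^{-}(\qgr(A))$ are fully faithful and the composite $D^{b}(\Ecal)\to D^{-}(\qgr(A))$ factors through $D^{b}(\qgr(A))$ (a bounded complex over $\Ecal$ has bounded cohomology), the functor $\iota$ is fully faithful.

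For (2): by (1) together with the equivalences $D^{b}(\qgr(A))\simeq D^{b}_{\qgr(A)}(\QGr(A))$ and $\Perf(\QGr(A))\simeq D(\QGr(A))^{c}$ from the diagram, $\iota$ induces a fully faithful functor $\bar\iota\colon D^{b}(\Ecal)\to D(\QGr(A))$ whose essential image lies in $\Perf(\QGr(A))$, because the objects of $\Ecal$ are perfect and $\Perf(\QGr(A))$ is triangulated. This essential image is a triangulated subcategory of $\Perf(\QGr(A))$ containing every $\Ocal_A(n)$; since $\{\Ocal_A(n)\}_{n\in\Zbb}$ is a set of compact generators of $D(\QGr(A))$, the thick subcategory of $D(\QGr(A))^{c}$ they generate is all of $\Perf(\QGr(A))$. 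As $D^{b}(\Ecal)$ is idempotent complete (being the bounded derived category of the idempotent complete exact category $\Ecal$), the essential image of the fully faithful functor $\bar\iota$ is closed under direct summands, hence thick, hence equal to $\Perf(\QGr(A))$. Thus $\bar\iota$ is an equivalence $D^{b}(\Locfree(\qgr(A)))\simeq\Perf(\QGr(A))$, which is (2).

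The step I expect to be the main obstacle is this last one in (2): upgrading ``fully faithful with generating essential image'' to ``essentially surjective onto $\Perf(\QGr(A))$'', i.e.\ knowing that the image of $\iota$ is closed under direct summands inside $\Perf(\QGr(A))$. I would handle it via the idempotent completeness of $\Ecal$ and of $D^{b}(\Ecal)$ as above; in any case \cite[Lemma 4.2.2]{bondal2002generators} is designed for exactly this kind of comparison and should directly yield the identification of $D^{b}(\Ecal)$ with the thick subcategory of $D^{b}(\qgr(A))$ generated by $\Ecal$. A secondary point worth spelling out is the compactness of $\Ocal_A(n)$ in $D(\QGr(A))$ for a merely right noetherian locally finite $A$, which is what puts $\Ocal_A(n)$ into $\Locfree(\qgr(A))$ and makes $\{\Ocal_A(n)\}$ a generating set.
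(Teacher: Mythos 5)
Your proposal is correct, and part (1) is essentially the paper's argument: both reduce to checking that the induced exact structure on $\Locfree(\qgr (A))$ satisfies (EPI) --- because the kernel of an epimorphism between perfect objects is a shifted cone, hence perfect --- and that every object of $\qgr (A)$ is a quotient of a finite sum of twists $\Ocal_A(-n_i)$ lying in $\Locfree(\qgr(A))$, and then invoke the standard comparison criterion (the paper cites \cite[Remark 9.5]{lunts2010uniqueness} for this; your detour through $D^-$ is the same mechanism). For part (2) you take a genuinely different route to essential surjectivity. The paper argues constructively: using that $D(\QGr (A))^c$ is classically generated by the twists (\cite[Lemma 4.2.2]{bondal2002generators}), it represents a compact object by a bounded complex in $\qgr (A)$ and builds, by induction on the amplitude, a quasi-isomorphic bounded complex of locally free objects (the final assembly step there is really a cone rather than the direct sum as written). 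You instead note that the essential image of the fully faithful functor $D^b(\Locfree(\qgr (A))) \rightarrow \Perf(\QGr (A))$ is a triangulated subcategory containing every $\Ocal_A(n)$ and is closed under direct summands because the bounded derived category of an idempotent complete exact category is idempotent complete; classical generation then forces the image to be all of $\Perf(\QGr (A))$. This is shorter and avoids the resolution induction, but it leans on the Balmer--Schlichting idempotent-completion theorem, which is the one nontrivial input outside the paper's toolbox and should be cited explicitly. Otherwise both proofs consume the same external facts, namely compactness of the $\Ocal_A(n)$ in $D(\QGr (A))$ and classical generation of $D(\QGr (A))^c$ by them, so your version is an acceptable (and arguably cleaner) substitute for the paper's proof of (2).
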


\begin{proof}
    \label{proof:lem of BO-ample 1}
(1) \ From \cite[Remark 9.5]{lunts2010uniqueness}, it is enough to show that 
\begin{enumerate}[label=(\alph*)]
    \item (EPI) holds in $\Locfree(\qgr (A)) \subset \qgr (A)$ and 
    \item for any $\qgrobjname{M} \in \qgr (A)$, there exists an epimorphism $\qgrobjname{E} \twoheadrightarrow \qgrobjname{M}$ for some $\qgrobjname{E} \in \Locfree(\qgr (A))$.
\end{enumerate}
Let $\qgrobjname{E}_1, \qgrobjname{E}_2 \in \Locfree(\qgr (A))$.
Let $f: \qgrobjname{E}_1 \rightarrow \qgrobjname{E}_2$ be an epimorphism in $\qgr (A)$.
Then, $\Ker(f)[1] \simeq \Cone(f)$ in $D(\QGr (A))$.
Since $D(\QGr (A))^c$ is a full triangulated subcategory of $D(\QGr (A))$, we have $\Ker(f) \in D(\QGr (A))^c$.
This means that (a) holds.
(b) follows from the fact $\qgrobjname{O}_A(n)$ is a compact object for any $n \in \Zbb$ and \cite[Proposition 4.4 (1)]{artin1994noncommutative}.

(2) \ Let $\qgrobjname{F} := (\cdots \rightarrow 0 \rightarrow \qgrobjname{F}^a \rightarrow \cdots \rightarrow \qgrobjname{F}^b \rightarrow 0 \rightarrow \cdots) \in D^b(\Locfree(\qgr (A)))$, where $a \leq b$ and $\qgrobjname{F}^a, \qgrobjname{F}^b \neq 0$.
Let $\sigma_{>i}\qgrobjname{F}:= (\cdots \rightarrow 0 \rightarrow \qgrobjname{F}^{i+1} \rightarrow \qgrobjname{F}^{i+2} \rightarrow \cdots)$ be the $i$-th stupid truncation of $\qgrobjname{F}$.
Then, we have a distinguished triangle
\[
\sigma_{>a}\qgrobjname{F} \rightarrow \qgrobjname{F} \rightarrow \qgrobjname{F}^a \rightarrow \sigma_{>a}\qgrobjname{F}[1].
\]
So, if $\sigma_{>a}\qgrobjname{F} \in D(\QGr (A))^c$, then $\qgrobjname{F} \in \Perf(\QGr (A))$.
By induction, we have any $\qgrobjname{F} \in D^b(\Locfree(\qgr (A)))$ is compact in $D(\QGr (A))$.
In particular, $i_1 \circ \iota$ factors through $D(\QGr (A))^c$.

From (1), it is enough to show that $i_1 \circ \iota:D^b(\Locfree(\qgr (A))) \rightarrow D(\QGr (A))^c$ is essentially surjective.
To prove this, we use the idea of the proof of \cite[Lemma 36.37.1]{stacks-project}.
Firstly, by \cite[Lemma 4.2.2]{bondal2002generators}, we have $D(\QGr (A))^c = \langle \qgrobjname{O}_A(n) \rangle_{n \in \Zbb}$, i.e., $D(\QGr (A))^c$ is classically generated by $\{\qgrobjname{O}_A(n)\}_{n \in \Zbb}$.
So, $D(\QGr (A))^c$ can be thought as a full triangulated subcategory of $D^b(\qgr (A))$.
Let $\qgrobjname{F} = (\cdots \rightarrow 0 \rightarrow \qgrobjname{F}^a \rightarrow \cdots \rightarrow \qgrobjname{F}^b \rightarrow 0 \rightarrow \cdots) \in D(\QGr (A))^c$, where $a \leq b$ and $\qgrobjname{F}^a, \qgrobjname{F}^b \neq 0$.
We prove that there exists $\qgrobjname{E} \in D^b(\Locfree(\qgr (A)))$ and a quasi-isomorphism $\qgrobjname{E} \rightarrow \qgrobjname{F}$ by induction on $b-a$.
When $b-a=0$, the claim is clear.
Assume that the claim holds for $b-a < n$.
There exists an epimorphism $\qgrobjname{E}^b \twoheadrightarrow \qgrobjname{F}^b$ for some $\tilde{\qgrobjname{E}}^b \in \Locfree(\qgr (A))$ by the fact $\qgrobjname{O}_A(n) \in \Locfree(\qgr (A))$ and \cite[Proposition 4.4 (1)]{artin1994noncommutative}.
Then, we have a morphism $\alpha:\tilde{\qgrobjname{E}}^b[-b] \rightarrow \qgrobjname{F}$ of chain complexes of objects in $\qgr (A)$ and a distinguished triangle 
\[ 
\tilde{\qgrobjname{E}}^b[-b] \rightarrow \qgrobjname{F} \rightarrow \Cone(\alpha) 
\]
in $D(\QGr (A))$.
By taking the long exact sequence, it is shown that $H^i(\Cone(\alpha)) \neq 0$ implies $i \in [a, b-1]$.
Considering appropriate standard truncations, we have $\Cone(\alpha) \simeq \qgrobjname{E}'$ for some $\qgrobjname{E}' = (\cdots \rightarrow 0 \rightarrow \qgrobjname{E}'^a \rightarrow \cdots \rightarrow \qgrobjname{E}'^{b-1} \rightarrow 0 \rightarrow \cdots) \in D^b(\qgr (A))$.
From the induction hypothesis, we also have an quasi-isomorphism $\beta:\qgrobjname{E}'' \rightarrow \Cone(\alpha)$ for some $\qgrobjname{E}'' \in D^b(\Locfree(\qgr (A)))$.
Finally, we put $\qgrobjname{E}:= \qgrobjname{E}'' \oplus \tilde{\qgrobjname{E}}^b[-b]$ and the natural morphism $\qgrobjname{E} \rightarrow \qgrobjname{F}$ is a quasi-isomorphism. 
This completes the proof.
\end{proof}

We recall the notion of quasi-Veronese algebras to construct a sequence of objects in $\Locfree(\qgr (A))$ which is BO-ample.

\begin{dfn}[{\cite[Definition 3.7]{mori2013bconstruction}}]
    \label{def:quasi-veronese}
    Let $A$ be an $\Nbb$-graded $k$-algebra and $r \in \Nbb$.
    We define the \emph{r-th quasi-Veronese algebra} $A^{[r]}$ of $A$ by 
    \begin{align*}
        A^{[r]}:= \bigoplus_{i \in \Nbb} 
        \begin{pmatrix}
            A_{ri} & A_{ri+1} & \cdots & A_{ri+l-1} \\
           A_{ri-1} & A_{ri} & \cdots & A_{ri+l-2} \\
           \vdots & \vdots & \ddots & \vdots \\
           A_{ri-l+1} & A_{ri-l+2} & \cdots & A_{ri}.
           \end{pmatrix}.
    \end{align*}
\end{dfn}

We recall basic properties of quasi-Veronese algebras.

\begin{lem}[{\cite[Lemma 3.9]{mori2013bconstruction}, \cite[Remark 3.19, Lemma 3.20]{mizuno2024some}}]
    \label{lem:quasi-veronese}
    Let $A$ be an $\Nbb$-graded $k$-algebra.
    Then, we have the following.
    \begin{enumerate}
        \item the functor 
        \begin{align*}
           \qV: \Gr(A) \rightarrow \Gr\left(A^{[r]}\right), \quad M \mapsto \bigoplus_{i \in \mathbb{Z}} \left(\bigoplus_{j=0}^{r-1} M_{ri-j} \right)
        \end{align*}
        is an equivalence.
        In particular, $\qV(\bigoplus_{i =0}^{r-1} A(i)) = A^{[r]}$.
        \item if $A$ is right noetherian, then $A^{[r]}$ is also right noetherian.
        Moreover, the equivalence in (1) gives an equivalence of categories between $\qgr (A)$ and $\qgr\left(A^{[r]}\right)$.
        \item If $A$ is finitely generated as an $A_0$-algebra, then for large enough $r \in \Nbb$, $A^{[r]}$ is generated by $A^{[r]}_1$ as an $A^{[r]}_0$-algebra.
    \end{enumerate}
\end{lem}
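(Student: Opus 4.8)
The plan is to prove the three parts in order, with part (1) as the conceptual core and part (3) as the only point requiring a short combinatorial argument.

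For (1), I would recognize $\qV$ as a \emph{condensation} (Morita-type) equivalence. Recall (as used via \cite{ballard2021kernels}) that $\Gr(A)\simeq\Mod(\dgcatname{A})$, where $\dgcatname{A}$ has object set $\Zbb$ and $\Hom_{\dgcatname{A}}(i,j)=A_{j-i}$; the matrix presentation in \cref{def:quasi-veronese} says precisely that $\Gr(A^{[r]})\simeq\Mod(\dgcatname{A}')$, where $\dgcatname{A}'$ is the full subcategory of the additive hull $\dgcatname{A}^{\mathrm{add}}$ on the ``blocks'' $\bigoplus_{a=0}^{r-1}(ri+a)$ (one checks $\Hom_{\dgcatname{A}^{\mathrm{add}}}(\bigoplus_a(ri+a),\bigoplus_b(rj+b))=\bigoplus_{a,b}A_{r(j-i)+b-a}$ with matrix composition, which is exactly $(A^{[r]})_{j-i}$). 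Since every object of $\dgcatname{A}$ is a direct summand of one of these blocks, $\dgcatname{A}$ and $\dgcatname{A}'$ have the same Karoubian additive hull, so by standard Morita theory for $k$-linear categories restriction of modules is an equivalence $\Mod(\dgcatname{A})\simeq\Mod(\dgcatname{A}')$. Unwinding identifications, this sends $M$ to the module with degree-$i$ part $\bigoplus_{a=0}^{r-1}\Hom_{\Gr(A)}(A(a),M(ri))=\bigoplus_{a=0}^{r-1}M_{ri-a}$, i.e. it is $\qV$; and $\qV(\bigoplus_{a=0}^{r-1}A(a))=A^{[r]}$ then drops out of $\Hom_{\Gr(A)}(A(a),A(b)(ri))=A_{ri+b-a}$, matching the matrix entries. (One can instead verify (1) entirely by hand with an explicit quasi-inverse; the $\Zbb$-category picture only serves to make the bimodule bookkeeping automatic.)

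Part (2) is then formal. Right noetherianity of an $\Nbb$-graded algebra $R$ is equivalent to $\bigoplus_{a=0}^{r-1}R(a)$ being a noetherian object of $\Gr(R)$, so since $\qV$ is an equivalence sending $\bigoplus_{a=0}^{r-1}A(a)$ to $A^{[r]}$, right noetherianity passes from $A$ to $A^{[r]}$. Being an equivalence of abelian categories, $\qV$ preserves finite generation, hence restricts to $\gr(A)\simeq\gr(A^{[r]})$, and it preserves torsion modules in both directions (degreewise it only regroups and shifts graded pieces, so an element killed by $A_{\ge n}$ is, after transport, killed by $A^{[r]}_{\ge n'}$ for $n'$ large, and conversely); therefore it descends to $\qgr(A)\simeq\qgr(A^{[r]})$.

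For (3), after absorbing degree-$0$ generators into $A_0$ we may assume $A$ is generated over $A_0$ by finitely many homogeneous elements of degrees in $\{1,\dots,d\}$, so that $A_n=\sum_{j=1}^{d}A_jA_{n-j}$ for all $n\ge 1$ (with $A_{<0}=0$); I claim any $r\ge d$ works. As matrices, $(A^{[r]}_1\cdot A^{[r]}_{i-1})_{a,c}=\sum_{b=0}^{r-1}A_{r+b-a}A_{r(i-1)+c-b}\subseteq A_{ri+c-a}=(A^{[r]}_i)_{a,c}$, and the reverse inclusion is the crux: writing any monomial of $A_{ri+c-a}$ as a product of generators of degree $\le d\le r$, its partial sums start at $0$ (below the length-$r$ window $\{r-a,\dots,2r-1-a\}$), end at $ri+c-a\ge 2r-a$ (above it, since $i\ge 2$), and grow in steps of size $\le d\le r$, so some partial sum lands in the window and splits the monomial appropriately. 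Hence $A^{[r]}_i=A^{[r]}_1\cdot A^{[r]}_{i-1}$ for $i\ge 2$, so $A^{[r]}_i=(A^{[r]}_1)^i$ by induction, which — together with $A^{[r]}_0A^{[r]}_1,\,A^{[r]}_1A^{[r]}_0\subseteq A^{[r]}_1$ — says exactly that $A^{[r]}$ is generated in degree $1$ over $A^{[r]}_0$. The step I expect to be the main obstacle is getting the conventions in (1) exactly right (so that condensation produces literally $\qV$ and not a shifted or dualized variant) and invoking ``Morita for linear categories'' cleanly in the graded setting; (2) and (3) are routine afterwards, the window argument in (3) being the only genuinely new ingredient. (As the statement is due to \cite{mori2013bconstruction} and \cite{mizuno2024some}, simply citing those is also an option.)
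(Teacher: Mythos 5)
Your proposal is correct, but note that the paper does not actually prove this lemma: it is recalled verbatim with citations to \cite{mori2013bconstruction} and \cite{mizuno2024some}, so there is no in-paper argument to compare against beyond the citation you yourself mention as an option. On its own terms your argument is sound. For (1), the identification of $\Gr(A^{[r]})$ with modules over the full subcategory of $\dgcatname{A}^{\mathrm{add}}$ on the blocks $\bigoplus_{a=0}^{r-1}(ri+a)$ is the right conceptual frame, and the Morita step (both categories have the same Karoubian additive hull, hence equivalent module categories) is standard; the only real work, as you say, is pinning down the variance/sign conventions so that restriction is literally $\qV$ and that the matrix composition law matches $(A^{[r]})_{j-i}$ --- the cited sources do this by hand with an explicit quasi-inverse, which is the less illuminating but more self-contained route. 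For (2), your reduction of right noetherianity to noetherianity of the object $\bigoplus_{a=0}^{r-1}A(a)$ in $\Gr(A)$, transported through the equivalence, is clean (one silently uses that graded-noetherian implies noetherian for $\Zbb$-graded rings, which is standard), and the torsion-preservation check is the correct way to descend to $\qgr(A)\simeq\qgr(A^{[r]})$. For (3), your window argument is complete and correct: with generators of degree at most $d\le r$, the partial degrees of a monomial of total degree $ri+c-a$ with $i\ge 2$ start at $0$, end at $\ge 2r-a$, and move in steps of at most $r$, so one of them lands in $\{r-a,\dots,2r-1-a\}$, giving $A^{[r]}_i=A^{[r]}_1\cdot A^{[r]}_{i-1}$; this is essentially the classical Veronese generation argument adapted to the matrix grading. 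In short, your write-up supplies a genuine proof where the paper only cites one, and the only point requiring extra care before it could stand as a replacement is the bookkeeping in (1).
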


We construct a sequence of objects in $\Locfree(\qgr (A))$ which is BO-ample in the following lemma.

\begin{lem}
    \label{lem:BO-ample 2}
    Let $A$ be a right noetherian locally finite $\Nbb$-graded $k$-algebra.
    If $A$ satisfies the $\chi$-condition and $\R^1\torfunct{A}(A)$ is a finite $A$-module, then for large enough $r \in \Nbb$,
    the sequence $\{\bigoplus_{i=0}^{r-1}\qgrobjname{O}_A(i+rn)\}_{n \in \Zbb}$ is BO-ample in $\Locfree(\qgr (A))$.
\end{lem}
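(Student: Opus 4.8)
The plan is to reduce, via the theory of quasi-Veronese algebras, to the case where $A$ is generated in degree $1$ over $A_0$, and then to verify conditions (a)--(c) of \cref{def:BO-ample} directly, taking the ambient abelian category to be $\qgr(A)$ itself and the exact embedding to be $\Locfree(\qgr(A)) \subset \qgr(A)$. Condition (EPI) and the statement that every object of $\qgr(A)$ receives an epimorphism from an object of $\Locfree(\qgr(A))$ have in fact already been checked in the proof of \cref{lem:BO-ample 1}(1), so only (a), (b), (c) remain.

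\emph{Reduction.} By \cref{lem:quasi-veronese}(3), for $r \gg 0$ the algebra $A^{[r]}$ is generated by $A^{[r]}_1$ over $A^{[r]}_0$, and by \cref{lem:quasi-veronese}(2) it is right noetherian with $\qV$ inducing an equivalence $\qgr(A)\simeq\qgr(A^{[r]})$; since $\qV$ also sends torsion modules to torsion modules it extends to $\QGr(A)\simeq\QGr(A^{[r]})$, hence carries $\Locfree(\qgr(A))=\Perf(\QGr(A))\cap\qgr(A)$ to $\Locfree(\qgr(A^{[r]}))$. Moreover $A^{[r]}$ is locally finite (each $A^{[r]}_i$ is a finite direct sum of spaces $A_j$), it inherits the $\chi$-condition from $A$ (seen from the above equivalence together with the description of $\chi$ in terms of right-boundedness of local cohomology modules, the grading of $A^{[r]}$ being a regrouping of that of $A$), and because $A^{[r]}=\qV\!\bigl(\bigoplus_{i=0}^{r-1}A(i)\bigr)$ one obtains $\R^1\torfunct{A^{[r]}}(A^{[r]})\cong\bigoplus_{i=0}^{r-1}\bigl(\R^1\torfunct{A}(A)\bigr)(i)$, a finite $A^{[r]}$-module. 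Finally $\qV$ sends $\bigoplus_{i=0}^{r-1}A(i+rn)$ to $A^{[r]}(n)$, so the sequence in the statement corresponds to $\{\qgrobjname{O}_{A^{[r]}}(n)\}_{n\in\Zbb}$, and BO-ampleness is preserved by the equivalence. Thus we may assume $A$ is generated in degree $1$ and must show that $\{\qgrobjname{O}_A(n)\}_{n\in\Zbb}$ is BO-ample in $\Locfree(\qgr(A))\subset\qgr(A)$.

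\emph{Verification.} For (a): any $\abobjname{M}=\pi_A M$ with $M\in\gr(A)$ is a quotient of a finite sum $\bigoplus_i\qgrobjname{O}_A(-d_i)$ by \cite[Proposition 4.4]{artin1994noncommutative}, and because $A$ is generated in degree $1$ the multiplication map $A(-N)^{\oplus\dim_k A_{N-d}}\to A(-d)$ is an isomorphism in all degrees $\ge N$, so $\qgrobjname{O}_A(-d)$ is a quotient of $\qgrobjname{O}_A(-N)^{\oplus m}$ in $\qgr(A)$ for every $N\ge d$; composing, $\abobjname{M}$ is a quotient of $\qgrobjname{O}_A(n)^{\oplus m_n}$ for all $n\ll 0$. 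For (c): $\Hom_{\qgr(A)}(\pi_A M,\qgrobjname{O}_A(n))\cong\Hom_{\Gr(A)}\bigl(M,(\Q_A A)(n)\bigr)$, and the exact sequence $0\to\torfunct{A}(A)\to A\to\Q_A A\to\R^1\torfunct{A}(A)\to 0$ (coming from the triangle $\Rtorfunct{A}(A)\to A\to\R\Q_A(A)\to\Rtorfunct{A}(A)[1]$), together with the hypothesis that $\R^1\torfunct{A}(A)$ is finite (hence bounded below, being a finitely generated $\Nbb$-graded module), shows that $\Q_A A$ is bounded below; since $M$ is finitely generated, this forces the $\Hom$ to vanish for $n\ll 0$. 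For (b): with $\abobjname{M}=\pi_A M$ and $\qgrobjname{O}_A=\pi_A A$, the triangle $\Rtorfunct{A}(M)\to M\to\R\Q_A(M)\to\Rtorfunct{A}(M)[1]$ gives $\Ext^i_{\qgr(A)}(\qgrobjname{O}_A(n),\pi_A M)\cong\bigl(\R^{i+1}\torfunct{A}(M)\bigr)_{-n}$ for $i\ge 1$, and the $\chi$-condition makes each $\R^{j}\torfunct{A}(M)$ right-bounded, which yields the required vanishing for $n\ll 0$.

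\emph{Main obstacle.} The delicate point is (b): once $n$ is small enough, \emph{all} the groups $\Ext^i_{\qgr(A)}(\qgrobjname{O}_A(n),\abobjname{M})$ with $i>0$ must vanish simultaneously, i.e. one needs $\bigl(\R^{j}\torfunct{A}(M)\bigr)_t=0$ for $t\gg 0$ uniformly in $j$. This is immediate whenever $\torfunct{A}$ has finite cohomological dimension, which is the case in every application of this lemma in the paper, since there $A$ has a balanced dualizing complex and \cref{thm:balanced dualizing complex} applies; in general it has to be extracted from the $\chi$-condition by a boundedness/limit argument on the modules $\R^{j}\torfunct{A}(M)$. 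Carrying out that step cleanly, and verifying carefully that the hypotheses all survive the passage to $A^{[r]}$ (in particular that the $\chi$-condition does), is where the actual work lies; the verifications of (a), (c), and (EPI) are routine.
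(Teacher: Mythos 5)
Your proposal follows the same basic strategy as the paper's proof (quasi-Veronese algebras to exploit generation in degree $1$, the four-term sequence $0\to\torfunct{A}(A)\to A\to \Q_A(A)\to\R^1\torfunct{A}(A)\to 0$ for (c), and right-boundedness of local cohomology for (b)), but with one organizational difference that accounts for most of the work you flag as remaining. The paper never replaces $A$ by $A^{[r]}$: it stays in $\qgr(A)$ with the objects $L_n=\bigoplus_{j=0}^{r-1}\qgrobjname{O}_A(j+rn)$ and uses the equivalence $\qV$ only to transport the surjection $A^{[r]}(-1)^{\oplus n}\twoheadrightarrow (A^{[r]})_{>0}$ into an epimorphism $L_{-1}^{\oplus n}\twoheadrightarrow L_0$, from which (a) follows by composition with a surjection $\bigoplus_i L_{-k_i}^{\oplus n_i}\twoheadrightarrow\qgrobjname{M}$. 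Conditions (b) and (c) are then checked directly for $A$, so there is no need to verify that the $\chi$-condition and the finiteness of $\R^1\torfunct{A}(A)$ survive the passage to $A^{[r]}$ --- precisely the two transfers your reduction asserts but does not prove. As for the ``main obstacle'' you identify in (b): the paper disposes of it in one line by citing the noncommutative Serre vanishing theorem \cite[Theorem 7.4]{artin1994noncommutative}, which under the $\chi$-condition is exactly the statement that $H^i(\qgrobjname{M}(m))=0$ for $i\geq 1$ and $m\gg 0$; so this step is a citation, not an argument to be reconstructed (and your remark that finite cohomological dimension of $\torfunct{A}$ holds in every application is true but not needed once that theorem is invoked). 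Your verifications of (EPI), (a) and (c) agree with the paper's, up to the harmless slip that the map $A(-N)^{\oplus\dim_k A_{N-d}}\to A(-d)$ is only surjective (not an isomorphism) in degrees $\geq N$, which suffices since its cokernel is bounded, hence torsion.
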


\begin{proof}
    \label{proof:lem of BO-ample 2}
We verify the conditions (1), (2) in \cref{def:BO-ample}.
We have already shown that (EPI) holds in $\Locfree(\qgr (A))$ in the proof of (1) of \cref{lem:BO-ample 1}.

Regarding (a) of (2),  for any $\qgrobjname{M} \in \qgr (A)$, there exists an epimorphism 
\begin{equation*}
    \label{eq:epimorphism 1}
   \psi:\bigoplus_{i=1}^p \qgrobjname{O}_A(-l_i)^{\oplus n_i} \twoheadrightarrow \qgrobjname{M}
\end{equation*}
for some $l_i, n_i \in \Nbb$.
On the other hand, if $r$ is large enough, then $A^{[r]}$ is generated by $A^{[r]}_1$ as an $A^{[r]}_0$-algebra by \cref{lem:quasi-veronese} (3).
So, we have an surjective morphism $A^{[r]}(-1)^{\oplus n} \twoheadrightarrow \left(A^{[r]}\right)_{>0}$ for some $n \in \Nbb$ in $\Gr \left(A^{[r]}\right)$. 
If we put $\widetilde{\qgrobjname{O}}_A(i) = \pi_{A^{[r]}}\left(A^{[r]}(i)\right)$, then we have an epimorphism
\[
    \widetilde{\qgrobjname{O}}_A(-1)^{\oplus n} \twoheadrightarrow \widetilde{\qgrobjname{O}}_A 
\]
in $\qgr \left(A^{[r]} \right)$.
We also put $L_i = \bigoplus_{j=0}^{r-1} \qgrobjname{O}_A(j+ir)$.
Then, $\qV(L_i) = \widetilde{\qgrobjname{O}}_A(i)$ from \cref{lem:quasi-veronese} (1).
So, we have an epimorphism $\varphi: L_{-1}^{\oplus n} \twoheadrightarrow L_0$ from \cref{lem:quasi-veronese} (2).
Here, $\psi$ induces an epimorpshim
\begin{equation*}
    \label{eq:epimorphism 2}
   \psi':\bigoplus_{i=1}^p L_{-k_i}^{\oplus n_i} \twoheadrightarrow \qgrobjname{M}
\end{equation*}
for some $k_i \in \Nbb$.
Thus, by using $\varphi$ and the degree shift functor, we also have an epimorphism $L_{-k}^{\oplus n'} \twoheadrightarrow \qgrobjname{M}$ for some $k,n' \in \Nbb$.
Moreover, for any $n'' < n'$, we have an epimorphism $L_{-k'}^{\oplus n''} \twoheadrightarrow \qgrobjname{M}$ for some $k' \in \Nbb$ by using $\varphi$ and the degree shift functor again.
Hence, (a) holds.

(b) follows from the assumption $A$ satisfies the $\chi$-condition and the noncommutative Serre vanishing theorem \cite[Theorem 7.4]{artin1994noncommutative}.

Regarding (c), for any $\qgrobjname{M} \in \qgr (A)$, we have an epimorphism $f: L_{-k}^{\oplus n} \twoheadrightarrow \qgrobjname{M}$ for some $k,n \in \Nbb$ and 
the long exact sequence
\begin{align*}
    0 \rightarrow \Hom_{\qgr (A)}(\qgrobjname{M}, L_{l}) \rightarrow \Hom_{\qgr (A)}(L_{-k}, L_{l})^{\oplus n} \rightarrow \Hom_{\qgr (A)}(\Ker(f), L_{l}) \rightarrow \cdots
\end{align*}
for any $l \in \Zbb$.
So, it is enough to show that $H^0(\qgr (A),L_l) =\bigoplus_{i=0}^{r-1} H^0(\qgr (A), \qgrobjname{O}_A(i+rl)) =0$ for $l \ll 0$.
Here, we have the canonical exact sequence
\begin{equation}
    \label{eq:canonical exact sequence}
    0 \rightarrow \torfunct{A}(A) \rightarrow A \rightarrow \Q(A) \rightarrow \R^1\torfunct{A}(A) \rightarrow 0 \tag{$\star$}.
\end{equation}
Note that $\Q(A) \simeq \bigoplus_{i \in \Zbb} \Hom(\qgrobjname{O}_A, \qgrobjname{O}_A(i))$.
Since $A$ satisfies the $\chi$-condition, $\R^1\torfunct{A}(A)$ is right bounded (\cite[Proposition 3.14]{artin1994noncommutative}).
Moreover, from the assumption, $\R^1\torfunct{A}(A)$ is a finite $A$-module, so $\R^1\torfunct{A}(A)$ is also left bounded.
This implies that $\Q(A)$ is left bounded and $H^0(\qgr (A), \qgrobjname{O}_A(i)) =0$ for $i \ll 0$.
Hence, $H^0(\qgr (A), L_l) =0$ for $l \ll 0$.
This completes the proof.

\end{proof}

We are now ready to prove the main theorem about Fourier-Mukai functors in noncommutative projective geometry.

\begin{thm}
    \label{thm:re-main2}
    Let $A,B$ be noetherian locally finite $\Nbb$-graded $k$-algebras.
    We assume that $A,B$ have balanced dualizing complexes.
    Let $F: \Perf(\QGr (A)) \rightarrow D(\QGr (B))$ be an exact fully faithful functor.
    Then, there exists an object $\qgrobjname{E} \in D(\QBiGr(A^{\mathrm{op}} \otimes_k B))$ such that 
    \begin{enumerate}
        \item $\Phi_{\qgrobjname{E}}$ is exact fully faithful and $\Phi_\qgrobjname{E}(\qgrobjname{P}) \simeq F(\qgrobjname{P})$ for any $\qgrobjname{P} \in \Perf(\QGr (A))$. \label{thm:re-main2 1}
        \item If $F$ sends a perfect complex to a perfect complex, then the  induced functor $\Phi_\qgrobjname{E}:D(\QGr (A)) \rightarrow D(\QGr (B))$ is fully faithful and $\Phi_\qgrobjname{E}$ sends a perfect complex to a perfect complex. \label{thm:re-main2 2}
        \item If $\R^1\torfunct{A}(A)$ is a finite $A$-module, then $F \simeq \Phi_\qgrobjname{E}$.
        Moreover, if $\Phi_{\qgrobjname{E}} \simeq \Phi_{\qgrobjname{E}'}$ for some $\qgrobjname{E}' \in D(\QBiGr(A^{\mathrm{op}} \otimes_k B))$, then $\qgrobjname{E} \simeq \qgrobjname{E}'$. \label{thm:re-main2 3}
    \end{enumerate}
    \end{thm}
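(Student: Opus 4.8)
The plan is to follow the strategy of \cite{lunts2010uniqueness} (in particular their Corollary 9.13) adapted to the noncommutative projective setting, using the dg-enhancement results established above. First I would invoke \cref{lem:BO-ample 2}: after passing to a sufficiently large quasi-Veronese algebra $A^{[r]}$ (which is harmless by \cref{lem:quasi-veronese} (2), since $\qgr(A) \simeq \qgr(A^{[r]})$ and likewise for $\QGr$), one obtains a BO-ample sequence $\{L_n\}_{n \in \Zbb}$ of objects in $\Locfree(\qgr(A))$, where $L_n = \bigoplus_{i=0}^{r-1}\qgrobjname{O}_A(i+rn)$. This sequence sits inside $\Perf(\QGr(A)) \simeq D^b(\Locfree(\qgr(A)))$ by \cref{lem:BO-ample 1}. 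Note that for part (3) one needs $\R^1\torfunct{A}(A)$ finite, which is precisely the extra hypothesis there; for parts (1) and (2) this is not required, but the $\chi$-condition (which holds because $A$ has a balanced dualizing complex, by \cref{thm:balanced dualizing complex}) still gives a weaker "ample-type" sequence sufficient to run the uniqueness-of-enhancement machinery. Then, following \cite{lunts2010uniqueness}, the existence of a BO-ample sequence forces the dg enhancement of $\Perf(\QGr(A))$ to be unique (in their terminology, $\Perf(\QGr(A))$ "has a strong unique enhancement"), and an exact fully faithful functor $F$ lifts to a quasi-functor between the dg enhancements $\Perfdg(\QGr(A))$ and $\Ddg(\QGr(B))$.

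Next I would feed this lifted quasi-functor into \cref{thm:dg morita}. Since $A,B$ have balanced dualizing complexes, that theorem identifies $\Ddg(\QBiGr(A^{\op}\otimes_k B))$ with the dg category of direct-sum-preserving quasi-functors $\Ddg(\QGr(A)) \to \Ddg(\QGr(B))$. The lifted quasi-functor obtained from $F$, after taking a colimit-preserving (i.e.\ left-Kan-extended) version $\tilde F : \Ddg(\QGr(A)) \to \Ddg(\QGr(B))$ — which exists because $\Perf(\QGr(A))$ generates $D(\QGr(A))$ and $F$ is exact — corresponds under this equivalence to some object $\qgrobjname{E} \in D(\QBiGr(A^{\op}\otimes_k B))$, and by construction its associated Fourier-Mukai functor $\Phi_{\qgrobjname{E}}$ restricts on $\Perf(\QGr(A))$ to (a functor isomorphic to) $F$. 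Full faithfulness of $\Phi_{\qgrobjname{E}}$ on $\Perf(\QGr(A))$ is then immediate, giving (1). For (2): if $F$ preserves perfect complexes, then the kernel $\qgrobjname{E}$ has the property that $\Phi_{\qgrobjname{E}}$ sends compact generators to compact objects; since $\Phi_{\qgrobjname{E}}$ preserves direct sums and the compact generators $\qgrobjname{O}_A(n)$ generate, a standard argument (e.g.\ via the projection formula / compact generation, as in \cite{lunts2010uniqueness}) shows $\Phi_{\qgrobjname{E}}: D(\QGr(A)) \to D(\QGr(B))$ is fully faithful and preserves perfect complexes — one checks full faithfulness first on the generators (where it follows from full faithfulness of $F$) and then extends to all of $D(\QGr(A))$ by compactness.

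For (3), under the stronger hypothesis that $\R^1\torfunct{A}(A)$ is a finite $A$-module, \cref{lem:BO-ample 2} gives a genuine BO-ample sequence, and the uniqueness part of the dg-enhancement theory of \cite{lunts2010uniqueness} applies in full strength: not only does $F$ lift, but the lift is unique up to isomorphism of quasi-functors, so $F \simeq \Phi_{\qgrobjname{E}}$ as functors (not merely after restriction to a generating subcategory, but because $F$ itself is already only defined on $\Perf$, the statement is that the two agree there). The uniqueness of the kernel — if $\Phi_{\qgrobjname{E}} \simeq \Phi_{\qgrobjname{E}'}$ then $\qgrobjname{E}\simeq\qgrobjname{E}'$ — follows directly from the fact that $\qgrobjname{E} \mapsto \Phi^{\dg}_{\qgrobjname{E}}$ is an \emph{equivalence} of dg categories in \cref{thm:dg morita}, hence injective on isomorphism classes of objects; one just has to check that an isomorphism of the underlying triangulated functors $\Phi_{\qgrobjname{E}} \simeq \Phi_{\qgrobjname{E}'}$ on $D(\QGr(A))$ upgrades to an isomorphism of quasi-functors, which again uses the unique-enhancement property guaranteed by the BO-ample sequence.

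The main obstacle I expect is the passage between the triangulated-level functor $F$ and a quasi-functor between dg enhancements — i.e.\ verifying that the hypotheses of the relevant uniqueness-of-enhancements result (Lunts–Orlov) are genuinely met in this noncommutative setting, which is exactly why the construction of a BO-ample sequence in $\Perf(\QGr(A))$ is singled out in the introduction as the technical heart. In particular, care is needed in reducing to the quasi-Veronese algebra at the start (one must check that the equivalence $\Perf(\QGr(A)) \simeq \Perf(\QGr(A^{[r]}))$ and its dg enhancement are compatible with everything, and that the extra hypotheses on $A$ — $\chi$-condition, finiteness of $\R^1\torfunct{A}(A)$ — transfer to $A^{[r]}$), and in the colimit-extension step, where one must ensure the extended functor $\tilde F$ genuinely preserves small coproducts so that \cref{thm:dg morita} applies.
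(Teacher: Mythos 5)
Your proposal follows essentially the same route as the paper's proof: both reduce to the machinery of Lunts--Orlov (their Theorem 6.4 and Corollary 9.13), using \cref{lem:BO-ample 1} and \cref{lem:BO-ample 2} to supply the BO-ample sequence $\{L_n\}$ needed for part (3), and \cref{thm:dg morita} to convert the lifted, coproduct-preserving quasi-functor into a kernel $\qgrobjname{E}$, with the uniqueness of the kernel handled by upgrading the triangulated isomorphism to one of quasi-functors. The only cosmetic differences are that the paper constructs the extension explicitly via the subcategory $\dgcatname{C}$ of the essential image and the functors $I_1^\ast$, ${I_2}_\ast$ rather than an abstract left Kan extension, and it works with the sequence $\{L_n\}$ inside $\Locfree(\qgr(A))$ directly rather than literally replacing $A$ by $A^{[r]}$; neither affects the argument.
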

    
    \begin{proof}
        \label{proof:main2}
       Thanks to proving \cref{lem:BO-ample 1} and \cref{lem:BO-ample 2}, the proof is similar to the proof of \cite[Corollary 9.13]{lunts2010uniqueness}.
        We give a sketch of the proof for the reader's convenience.

        \noindent
        \emph{Step 1: Construction of $\qgrobjname{E}$.}

        Here, we describe how to construct $\qgrobjname{E}$.
        Firstly, note that we have $D(\QGr (A)) \simeq D(\dgcatname{A})/D_{\Tor}(\dgcatname{A})$ and $\Perf(\QGr (A)) \simeq (D(\dgmodname{A})/D_{\Tor}(\dgmodname{A}))^c$.
        We also have quasi-equivalences (\cite[Proposition 1.17]{lunts2010uniqueness})
        \begin{align}
        \label{eq:semi-free quasi-equivalence}
           \phi_A:D_{\dg}(\QGr (A)) \xrightarrow{\sim} \sfmod (\Perf_{\dg}(\QGr (A))), \
           \phi_B:D_{\dg}(\QGr (B)) \xrightarrow{\sim} \sfmod (\Perf_{\dg}(\QGr (B))). \tag{$\dagger$}
        \end{align}
        We denote $\dgcatname{C}$ by the full dg subcategory of $\Perf_{\dg}(\QGr (B))$ which consists of the objects in the essential image of $H^0(\phi_B) \circ F$.
        Then, the functor $H^0(\phi_B) \circ F$  induces an equivalence
        \[
        G:\Perf(\QGr (A)) \rightarrow H^0(\dgcatname{C}).
        \]
        By \cite[Theorem 6.4]{lunts2010uniqueness}, we have a quasi-equivalnce 
        \[
        \widetilde{G}:\Perf_{\dg}(\QGr (A)) \rightarrow \dgcatname{C}.
        \]
        This functor induces a quasi-equivalence
        \[
        \widetilde{G}^{*}:\sfmod(\Perfdg(\QGr (A))) \rightarrow \sfmod(\dgcatname{C}).
        \]
        Let $\dgcatname{D}$ be a dg subcategory of $\sfmod(\dgcatname{C})$ that contains $\dgcatname{C}$ and $\Perfdg(\QGr (B))$.  
        We denote by 
        \begin{align*}
            I_1:\dgcatname{C} \hookrightarrow \dgcatname{D}, \quad 
            I_2:\Perfdg(\QGr (B)) \hookrightarrow \dgcatname{D}
        \end{align*}
        the embedding functors.
        Then, we have the extension and the restriction functors
        \begin{align*}
        I_1^*:\sfmod(\dgcatname{C}) \rightarrow \sfmod(\dgcatname{D}), \quad
        {I_2}_*:\sfmod(\dgcatname{D}) \rightarrow \sfmod(\Perfdg(\QGr (B))),
        \end{align*}
        respectively.
        The functors $H^0(I_1^*)$, $H^0({I_2}_*)$ and $H^0(\widetilde{G}^*)$ commute with direct sums.
        Here, we define a quasi-functor 
        \begin{align*}
            \widetilde{F} := \phi_B^{-1} \circ {I_2}_* \circ I_1^* \circ \widetilde{G}^* \circ \phi_A:D_{\dg}(\QGr (A)) \rightarrow D_{\dg}(\QGr (B)). 
        \end{align*}
        This functor commutes with direct sums.
        Thus, there exists an object $\qgrobjname{E} \in D(\QGr(A^{\op} \otimes_k B))$ such that $\Phi_{\qgrobjname{E}}^{\dg} \simeq \widetilde{F}$ by \cref{thm:dg morita}. \par

        \noindent\emph{Step 2: Proof of (1), (2) and (3).}

        (1) \  Because $({I_2}_* \circ I_1^*)|_{\dgcatname{C}}$ is isomorphic to the inclusion functor $\dgcatname{C} \hookrightarrow \sfmod(\Perfdg(\QGr (B)))$, $\Phi_{\qgrobjname{E}}|_{\Perf(\QGr (A))}$ is fully faithful.
        By \cite[Theorem 6.4 (3)]{lunts2010uniqueness}, we have an isomorphism $H^0(\widetilde{G})(\qgrobjname{M}) \simeq G(\qgrobjname{M})$ for any $\qgrobjname{M} \in \Perf(\QGr (A))$.
        Thus, $\Phi_{\qgrobjname{E}}(\qgrobjname{M}) \simeq F(\qgrobjname{M})$ for any $\qgrobjname{M} \in \Perf(\QGr (A))$.
        
        (2) \ If $F$ sends a perfect complex to a perfect complex, then we can take $\dgcatname{D}$ as $\sfmod(\Perfdg(\QGr (B)))$.
        In this case, ${I_2}_*$ is the identity and $I_1^*$ is fully faithful (\cite[Proposition 1.15]{lunts2010uniqueness}).
        This implies the assertion.

        (3) \ From \cite[(2) of Theorem 6.4]{lunts2010uniqueness}, we have an isomorphism of functors 
        \[
            \theta:H^0(\widetilde{G}) \circ \pi \circ \yoneda_{\dgcatname{A}} \rightarrow G \circ \pi \circ \yoneda_{\dgcatname{A}},
        \]
        where the compositions of the above functors given by the following diagram:
        \[
    \begin{tikzcd}
        \dgcatname{A} \arrow[r, "\yoneda_{\dgcatname{A}}"] & D(\dgcatname{A})^c \arrow[r,"\pi"] & D(\dgcatname{A})^c/D_{\Tor}(\dgcatname{A})^c \arrow[hookrightarrow]{r} & (D(\dgmodname{A})/D_{\Tor}(\dgmodname{A}))^c \arrow[r, "G"', "H^0(\widetilde{G})"] & H^0(\dgcatname{C}).
    \end{tikzcd}
    \]
    Let $j: \{L_i\}_{i \in \Zbb} \hookrightarrow \Perf(\QGr (A))$ be the inclusion functor, where $L_i$ is as in the proof of \cref{lem:BO-ample 2}.
    Note that we think of $\{L_i\}_{i \in \Zbb}$ as a full subcategory of $\Locfree (\qgr (A))$ and use \cref{lem:BO-ample 1}.
    Then, we have an isomorphism of functors $j \simeq  G \circ H^0(\widetilde{G}) \circ j$.
    Since $\{L_i\}_{i \in \Zbb}$ is BO-ample in $\Locfree(\qgr (A))$ from \cref{lem:BO-ample 2}, we have $G \circ H^0(\widetilde{G}) \simeq \idfun_{\Perf(\QGr (A))} $ from \cite[Proposition 9.6]{lunts2010uniqueness}.
    This implies that $\Phi_{\qgrobjname{E}} \simeq F$.
    The uniqueness of $\qgrobjname{E}$ follows from \cite[Theorem 5.5]{genenovese2016theuniqueness}.
    \end{proof}

    \begin{rmk}
        \label{rmk:re-main2}
        In the proof of (3) of \cref{thm:re-main2}, to apply \cite[Theorem 5.5]{genenovese2016theuniqueness}, we only use the assumption that $F$ is fully faithful.
    \end{rmk}

    In the proof of \cref{lem:BO-ample 2}, when we check the condition (c), if we only have $H^0(\qgrobjname{O}_A(i)) =0$ for $i \ll 0$, then we can obtain the result of the lemma.
    In particular, we have the following simple version of \cref{thm:re-main2} in the same way as the proof of \cref{thm:re-main2}.
    \begin{prop}
        \label{prop:re-main2}
        Let $A,B$ be noetherian locally finite $\Nbb$-graded $k$-algebras.
        We assume that $A,B$ have balanced dualizing complexes.
        Let $\Perf(\QGr (A)) \rightarrow D(\QGr (B))$ be an exact fully faithful functor.
        We assume that $H^0(\QGr (A),\Ocal_A(m))=0,\ m \ll 0$.
        Then, $F$ is of Fourier-Mukai type and the kernels are unique up to quasi-isomorphism.
    \end{prop}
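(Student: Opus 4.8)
The plan is to imitate the proof of \cref{thm:re-main2} essentially verbatim; the only modification is in the construction of the BO-ample sequence, where the hypothesis $H^0(\QGr (A),\Ocal_A(m))=0$ for $m\ll 0$ will play the role that finiteness of $\R^1\torfunct{A}(A)$ plays in \cref{lem:BO-ample 2}. First I would observe that $A$ satisfies the $\chi$-condition, since it has a balanced dualizing complex (\cref{thm:balanced dualizing complex}); this --- together with the standing hypotheses that $A$ is right noetherian and locally finite --- is all that the proof of \cref{lem:BO-ample 2} requires apart from the statement about $\R^1\torfunct{A}(A)$. So I fix $r\in\Nbb$ large enough that $A^{[r]}$ is generated in degree $1$ over $A^{[r]}_0$ (\cref{lem:quasi-veronese} (3)), put $L_n:=\bigoplus_{j=0}^{r-1}\Ocal_A(j+rn)\in\Locfree(\qgr (A))$, and claim that $\{L_n\}_{n\in\Zbb}$ is BO-ample in $\Locfree(\qgr (A))$.

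The verification of the conditions of \cref{def:BO-ample} is then the same as in the proof of \cref{lem:BO-ample 2} except for condition (c): (EPI) holds by \cref{lem:BO-ample 1} (1); condition (a) follows from the degree-$1$ generation of $A^{[r]}$ via the equivalences of \cref{lem:quasi-veronese} (1), (2); and condition (b) is the noncommutative Serre vanishing theorem \cite[Theorem 7.4]{artin1994noncommutative} under the $\chi$-condition. For condition (c), given $\qgrobjname{M}\in\qgr (A)$ I would pick an epimorphism $\psi:L_{-k}^{\oplus n}\twoheadrightarrow\qgrobjname{M}$ with $k,n\in\Nbb$; applying $\Hom_{\qgr (A)}(-,L_l)$ to $0\to\Ker(\psi)\to L_{-k}^{\oplus n}\to\qgrobjname{M}\to 0$ embeds $\Hom_{\qgr (A)}(\qgrobjname{M},L_l)$ into $\Hom_{\qgr (A)}(L_{-k},L_l)^{\oplus n}$, and the latter is a finite direct sum of groups $H^0(\QGr (A),\Ocal_A(j'-j+r(k+l)))$ with $0\le j,j'\le r-1$. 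With $k$ fixed, each of these finitely many twists is at most $(r-1)+r(k+l)$, which tends to $-\infty$ as $l\to -\infty$; hence by the hypothesis there is $n_0$, depending on $\qgrobjname{M}$, with $\Hom_{\qgr (A)}(\qgrobjname{M},L_l)=0$ for all $l<n_0$, which is exactly condition (c). Thus $\{L_n\}_{n\in\Zbb}$ is BO-ample in $\Locfree(\qgr (A))$.

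With this BO-ample sequence in hand, I would run Step 1 of the proof of \cref{thm:re-main2} unchanged to produce $\qgrobjname{E}\in D(\QBiGr(A^{\op}\otimes_k B))$ together with a quasi-isomorphism $\Phi^{\dg}_{\qgrobjname{E}}\simeq\widetilde{F}$, the last step using \cref{thm:dg morita} (which applies since $A$ and $B$ both have balanced dualizing complexes). Then I would repeat part (3) of that proof: restricting the isomorphism of functors $\theta$ from \cite[Theorem 6.4 (2)]{lunts2010uniqueness} to the full subcategory $\{L_n\}_{n\in\Zbb}\subset\Perf(\QGr (A))$ and combining it with the BO-ampleness of $\{L_n\}_{n\in\Zbb}$ and \cite[Proposition 9.6]{lunts2010uniqueness} forces $F\simeq\Phi_{\qgrobjname{E}}$; in particular $F$ is of Fourier-Mukai type. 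Finally, uniqueness of the kernel up to quasi-isomorphism follows from \cite[Theorem 5.5]{genenovese2016theuniqueness} exactly as in the proof of \cref{thm:re-main2} (3); as recorded in \cref{rmk:re-main2}, this last step only uses the full faithfulness of $F$.

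I do not anticipate a genuine obstacle: the whole argument is a repackaging of \cref{lem:BO-ample 2} and \cref{thm:re-main2}. The single point requiring care is the reduction in condition (c) --- namely, checking that the finitely many, $k$-dependent twists $\Ocal_A(j'-j+r(k+l))$ all eventually drop below the vanishing threshold supplied by $H^0(\QGr (A),\Ocal_A(m))=0$ for $m\ll 0$, so that a statement previously deduced from left-boundedness of $\Q_A(A)$ is now taken as a hypothesis. Everything else --- the quasi-Veronese bookkeeping, the dg enhancements, and the appeal to \cref{thm:dg morita} --- is identical to the proofs already given.
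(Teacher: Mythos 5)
Your proposal is correct and follows essentially the same route as the paper: the paper's own proof consists precisely of the observation that the finiteness of $\R^1\torfunct{A}(A)$ in \cref{lem:BO-ample 2} is used only to deduce $H^0(\qgr(A),\Ocal_A(i))=0$ for $i\ll 0$ when verifying condition (c), so taking that vanishing as a hypothesis yields the BO-ample sequence and the rest of the argument of \cref{thm:re-main2} applies verbatim. Your more careful bookkeeping of the twists $\Ocal_A(j'-j+r(k+l))$ in condition (c) is a sound (and slightly more explicit) rendering of the step the paper leaves implicit.
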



\section{Bondal-Orlov's reconstruction theorem for noncommutative projective schemes}
\label{sec:BO for ncproj}

\begin{dfn}[{\cite[page 1151, 1155]{ballard2021kernels}}]
    \label{def:R'Q functor}
    We define a functor $\RprimeQ_A$
\begin{align*}
    \RprimeQ_A: D(\BiGr(A \otimes_k B)) &\rightarrow D(\BiGr(A \otimes_k B)),\\
    M &\mapsto \RprimeQ_A(M):= \bigoplus_{v \in \Zbb} \RQ_A(M_{*,v}).
\end{align*}
Similarly, we define a functor $\Rprimetau_A$ by $\Rprimetau_A(M):= \bigoplus_{v \in \Zbb} \Rtau_A(M_{*,v})$.
\end{dfn}

We recall basic properties of $\RQ$, $\Rtau$, $\RprimeQ$ and $\Rprimetau$ in \cite{ballard2021kernels}. 
In the following, we state the corresponding properties in the case of locally finite $\Nbb$-graded algebras.
The proofs are similar to the case of connected graded algebras.

\begin{lem}[cf. {\cite[Lemma 3.35, Proposition 3.36, Proposition 3.41]{ballard2021kernels}}]
\label{lem:basic properties}
Let $A,B$ be finitely generated locally finite $\Nbb$-graded $k$-algebras.
Let $M \in D(\Gr(A))$ and $P \in D(\BiGr(A \otimes_k B))$.
Then, we have the following.
\begin{enumerate}
    \item  We have distinguished triangles
    \begin{align*}
        \Rtau_A(M) \rightarrow &M \rightarrow \RQ_A(M) \quad \text{in} \ D(\Gr(A)), \\
        \Rprimetau_A(P) \rightarrow &P \rightarrow \RprimeQ_A(P) \quad \text{in} \ D(\BiGr(A \otimes_k B)), \\
        \Rtau_{A \otimes_k B}(P) \rightarrow &P \rightarrow \RQ_{A \otimes_k B}(P) \quad \text{in} \ D(\BiGr(A \otimes_k B)). 
    \end{align*} \label{lem:basic properties 2}
    \item  If $A$ satisfies (EF), then $\RQ_A$ and $\Rtau_A$ commute with coproducts. \label{lem:basic properties 3}
    \item If $A,B$ satisfy (EF) and $\tau_A, \tau_B$ have finite cohomological dimension, then we have 
    \begin{equation*}
        \RQ_{A \otimes_k B}(P) \simeq (\RprimeQ_A \circ \RprimeQ_B)(P) \simeq (\RprimeQ_B \circ \RprimeQ_A)(P) 
    \end{equation*}
    in $D(\BiGr(A \otimes_k B))$. \label{lem:basic properties 4}
\end{enumerate}
\end{lem}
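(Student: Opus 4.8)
The plan is to transcribe the proofs of \cite[Lemma 3.35, Proposition 3.36, Proposition 3.41]{ballard2021kernels}, which establish these statements for (noetherian) connected graded algebras, replacing at each step the use of connectedness---and of the Ext-finiteness it entails---by local finiteness together with the condition (EF). The three facts that take over the role played by Ext-finiteness in the connected case are recorded in \cref{rmk:EF condition}: that $\Rtorfunct{A}$ commutes with direct limits when $A$ satisfies (EF) (\cite[Lemma 2.17]{mori2021acategorical}), that $\R^i\torfunct{A}(M)=0$ for $i>0$ and every graded torsion module $M$, and the local duality theorem of \cite[Theorem 2.18]{mori2021acategorical}. I would also note at the outset that, because $\Tor(A\otimes_k B)$ is a Serre subcategory by \cref{lem:tor is Serre}, the quotient $\QBiGr(A\otimes_k B)$, the projection $\pi_{A\otimes_k B}$ and its right adjoint $\omega_{A\otimes_k B}$ behave formally exactly as in the singly graded case; the finite generation of $A\otimes_k B$ (in place of noetherianity, which fails in general) is used only here.

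For part (1) I would argue on injective resolutions. Applying $\Homint(-,M)$ to $0\to A_{\geq n}\to A\to A/A_{\geq n}\to 0$ and passing to the direct limit over $n$ gives the four-term exact sequence $0\to\torfunct{A}(M)\to M\to\Q_A(M)\to\R^1\torfunct{A}(M)\to 0$; for an injective $I$ its last term vanishes, so $0\to\torfunct{A}(I)\to I\to\Q_A(I)\to 0$ is exact, and since $\pi_A$ is exact and carries injectives to injectives, applying this termwise to an injective resolution $M\to I^\bullet$ produces a short exact sequence of complexes computing the triangle $\Rtau_A(M)\to M\to\RQ_A(M)\to\Rtau_A(M)[1]$. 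The triangle for $\Rtau_{A\otimes_k B}$ versus $\RQ_{A\otimes_k B}$ is the same argument carried out over $A\otimes_k B$, and the one for $\Rprimetau_A$ versus $\RprimeQ_A$ follows by applying the $\Gr(A)$-statement slice-by-slice through $(-)_{*,v}$ and taking the (exact, triangle-preserving) coproduct over $v\in\Zbb$. For part (2), $\Rtau_A=\Rtorfunct{A}$ commutes with coproducts because a coproduct is the filtered colimit of its finite subsums and $\Rtorfunct{A}$ commutes with filtered colimits under (EF); then $\RQ_A$ commutes with coproducts as well, since the coproduct of the functorial triangles $\Rtau_A(M_i)\to M_i\to\RQ_A(M_i)$ is again a triangle whose first two vertices coincide with those of $\Rtau_A(\bigoplus_i M_i)\to\bigoplus_i M_i$, forcing the third vertices to agree.

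For part (3), the strategy is to reduce to the corresponding statement for the torsion functors. A bigraded module is $\mathfrak m_{A\otimes_k B}$-torsion precisely when it is torsion both in the $A$-direction and in the $B$-direction, which translates into the underived identity $\torfunct{A\otimes_k B}\simeq\Gamma'_{\mathfrak m_A}\circ\Gamma'_{\mathfrak m_B}\simeq\Gamma'_{\mathfrak m_B}\circ\Gamma'_{\mathfrak m_A}$ for the partial torsion functors $\Gamma'_{\mathfrak m_A}(P)=\bigoplus_v\torfunct{A}(P_{*,v})$ and $\Gamma'_{\mathfrak m_B}(P)=\bigoplus_u\torfunct{B}(P_{u,*})$; I would then promote this to $\Rtau_{A\otimes_k B}\simeq\Rprimetau_A\circ\Rprimetau_B\simeq\Rprimetau_B\circ\Rprimetau_A$ and finally pass to $\RQ_{A\otimes_k B}$, $\RprimeQ_A$ and $\RprimeQ_B$ using the triangles of part (1) in the three graded settings. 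The step I expect to be the main obstacle is exactly this promotion to the derived level: one must verify that $\Rprimetau_B$ carries a suitable adapted class of objects to $\Rprimetau_A$-acyclic ones, so that a Grothendieck-type composition-of-derived-functors argument yields the identity with no correction terms. This is where both remaining hypotheses are spent---finite cohomological dimension of $\torfunct{A}$ and $\torfunct{B}$ makes the relevant double complex and its spectral sequence bounded, while (EF) is what allows $\Rtorfunct{A}$ and $\Rtorfunct{B}$ to be commuted past the direct limits over truncation degrees and the coproducts over the complementary grading that appear in the explicit descriptions of these functors. Once this identity is established, the remainder is the bookkeeping already carried out in \cite{ballard2021kernels}, with the connected-graded inputs replaced by those of \cref{rmk:EF condition}.
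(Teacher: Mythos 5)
Your proposal is correct and follows the same route as the paper, which proves this lemma only by remarking (cf.\ \cref{rmk:basic properties} and the sentence preceding the statement) that the proofs of \cite[Lemma 3.35, Proposition 3.36, Proposition 3.41]{ballard2021kernels} carry over verbatim once Ext-finiteness is replaced by the condition (EF) via the facts collected in \cref{rmk:EF condition}. Your write-up simply fills in the details of those cited proofs (the four-term sequence and injective resolutions for (1), filtered colimits plus the triangle for (2), and the acyclicity/finite-cohomological-dimension argument for (3)), and correctly identifies where each hypothesis is spent.
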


\begin{rmk}
    \label{rmk:basic properties}
    In \cite[Proposition 3.41]{ballard2021kernels}, $A,B$ are assumed to be right noetherian.
    However, only Ext-finiteness of $A,B$ is used in the proof of \cite[Proposition 3.41]{ballard2021kernels}.
    So, we replace the assumption by (EF) in \cref{lem:basic properties}.

\end{rmk}

The following lemma is a generalization of \cite[Proposition 3.32]{ballard2021kernels} in the bimodule setting.

\begin{lem}
    \label{lem:bimodule idempotent formula}
    Let $A$ be a finitely generated locally finite $\Nbb$-graded $k$-algebras.
$P \in D(\BiGr(A \otimes_k B))$.
    If $A$ satisfies (EF) and $\tau_A$ has finite cohomological dimension, then we have
    \begin{align*}
        (\Rprimetau_A \circ \Rprimetau_A)(P) &\simeq \Rprimetau_A(P), \\
        (\RprimeQ_A \circ \RprimeQ_A)(P) &\simeq \RprimeQ_A(P) 
    \end{align*}
    in $D(\BiGr(A \otimes_k B))$.
\end{lem}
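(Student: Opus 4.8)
The plan is to deduce the two idempotency statements from the corresponding statement for the single-graded functors $\Rtau_A$ and $\RQ_A$, namely \cite[Proposition 3.32]{ballard2021kernels} (which applies in the locally finite setting by the hypotheses (EF) and finite cohomological dimension of $\tau_A$, exactly as recorded in the preamble), together with the compatibility of $\Rprimetau_A$ and $\RprimeQ_A$ with the decomposition $P = \bigoplus_{v \in \Zbb} P_{*,v}$. Concretely, $\Rprimetau_A$ is by definition the functor that applies $\Rtau_A$ slice-by-slice in the second grading, i.e.\ $\Rprimetau_A(P) = \bigoplus_{v \in \Zbb} \Rtau_A(P_{*,v})$, and likewise for $\RprimeQ_A$. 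So the first task is to check that iterating $\Rprimetau_A$ really does amount to iterating $\Rtau_A$ on each slice.

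First I would verify that for each fixed $v \in \Zbb$ one has $(\Rprimetau_A(P))_{*,v} \simeq \Rtau_A(P_{*,v})$ as objects of $D(\Gr(A))$; this is essentially the definition together with the fact that taking the $v$-th slice is an exact functor $\BiGr(A \otimes_k B) \to \Gr(A)$ commuting with coproducts, so it passes to derived categories and commutes with the coproduct over $v$. (Here one also uses \cref{lem:basic properties 3}, i.e.\ that $\RQ_A$ and $\Rtau_A$ commute with coproducts, although for a single slice that is not strictly needed.) Granting this, applying $\Rprimetau_A$ a second time gives
\[
(\Rprimetau_A \circ \Rprimetau_A)(P) \simeq \bigoplus_{v \in \Zbb} \Rtau_A\bigl((\Rprimetau_A(P))_{*,v}\bigr) \simeq \bigoplus_{v \in \Zbb} (\Rtau_A \circ \Rtau_A)(P_{*,v}),
\]
and now \cite[Proposition 3.32]{ballard2021kernels} applied to each $P_{*,v} \in D(\Gr(A))$ yields $(\Rtau_A \circ \Rtau_A)(P_{*,v}) \simeq \Rtau_A(P_{*,v})$, so the right-hand side is $\bigoplus_v \Rtau_A(P_{*,v}) = \Rprimetau_A(P)$. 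The argument for $\RprimeQ_A$ is word-for-word the same, replacing $\Rtau_A$ by $\RQ_A$ throughout and citing the $\RQ_A$-part of \cite[Proposition 3.32]{ballard2021kernels}. Alternatively, once one has the distinguished triangle $\Rprimetau_A(P) \to P \to \RprimeQ_A(P)$ from \cref{lem:basic properties 2}, the $\RprimeQ_A$-idempotency can be obtained from the $\Rprimetau_A$-idempotency by applying $\RprimeQ_A$ to that triangle and using that $\RprimeQ_A \circ \Rprimetau_A \simeq 0$ (which in turn reduces sliced-wise to $\RQ_A \circ \Rtau_A \simeq 0$).

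The main subtlety — and the only place that requires a little care rather than bookkeeping — is to make sure that all these identifications are natural enough to be assembled into the infinite coproduct over $v$, i.e.\ that one is not merely getting an isomorphism for each $v$ separately but a genuine isomorphism of bigraded objects in $D(\BiGr(A \otimes_k B))$. This is handled by working with honest complexes: replace $P$ by a complex of bigraded modules, model $\Rtau_A$ on each slice by a fixed functorial resolution (possible since (EF) guarantees $\Rtorfunct{A}$ commutes with direct limits, cf.\ \cref{rmk:EF condition}), take the coproduct of these resolutions, and observe that this coproduct computes $\Rprimetau_A(P)$ precisely because $\Rtau_A$ commutes with coproducts by \cref{lem:basic properties 3}. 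Then the slice-wise isomorphisms of \cite[Proposition 3.32]{ballard2021kernels} are induced by a morphism of complexes and automatically glue. With that in hand the proof is complete; I expect the write-up to be short, citing \cite[Proposition 3.32]{ballard2021kernels} and \cref{lem:basic properties} as the two inputs.
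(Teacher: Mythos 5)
Your proposal is correct and follows essentially the same route as the paper: the first isomorphism is obtained slice-by-slice from \cite[Proposition 3.32]{ballard2021kernels} via the definition $\Rprimetau_A(P)=\bigoplus_{v}\Rtau_A(P_{*,v})$, and your ``alternative'' derivation of the $\RprimeQ_A$-idempotency from the triangles of \cref{lem:basic properties} together with the acyclicity of $(\RprimeQ_A\circ\Rprimetau_A)(P)$ is exactly the argument the paper uses for the second formula.
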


\begin{proof}
    \label{proof:bimodule idempotent formula}
    From the definition of $\Rprimetau_A$,
    \[
    \Rprimetau_A(P) = \bigoplus_{v \in \Zbb} \Rtau_A(P_{*,v}).
    \]
    For a fibrant object $P \in \BiGr(A \otimes_k B)$, it is shown that $\tau_A(P_{*,v})$ is $\tau_A$-acyclic in \cite[Proposition 3.32]{ballard2021kernels}.
    This gives the first formula of the lemma.
    For the second formula, we have the following triangles in $D(\BiGr(A \otimes_k B))$ from (1) of \cref{lem:basic properties}:
\begin{align*}
    (\Rprimetau_A \circ \Rprimetau_A(P)) \rightarrow &\Rprimetau_A(P) \rightarrow (\RprimeQ_A \circ \Rprimetau_A)(P), \\
    (\RprimeQ_A \circ \Rprimetau_A)(P) \rightarrow &\RprimeQ_A(P) \rightarrow (\RprimeQ_A \circ \RprimeQ_A)(P).
\end{align*}
Now, from the first formula of the lemma, $(\RprimeQ_A \circ \Rprimetau_A)(P)$ is acyclic.
Thus, we get the second formula of the lemma.
\end{proof}

The following lemma is a generalization of \cite[Proposition 4.5]{ballard2021kernels} in the bimodule setting.
\begin{lem}
    \label{lem:bimodule proj formula}
Let $A,B,C$ be finitely generated locally finite $\Nbb$-graded $k$-algebras.
Let $M \in D(\BiGr(B^{\op} \otimes_k A))$ and $N \in D(\BiGr(C^{\op} \otimes_k B))$.
Assume that $\RQ_A$ and $\Rtau_{A}$ commute with coproducts.
Then, we have natural isomorphisms in $D(\BiGr(C^{\op} \otimes_k A))$
\begin{align*}
    \Rprimetau_{A}(N \otimes_{\Bcal}^{\LD} M) \simeq N \otimes_{\Bcal}^{\LD} \Rprimetau_A (M), \\
    \RprimeQ_{A}(N \otimes_{\Bcal}^{\LD} M) \simeq N \otimes_{\Bcal}^{\LD} \RprimeQ_A(M),
\end{align*}
where $\Bcal$ is the dg category associated to $B$.
\end{lem}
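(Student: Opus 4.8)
The plan is to reduce the bimodule statement to the one-sided statement \cite[Proposition 4.5]{ballard2021kernels} by unwinding the definition of $\RprimeQ_A$ and $\Rprimetau_A$ degree-by-degree in the $B$-variable. Recall that for $P \in D(\BiGr(C^{\op} \otimes_k A))$ one has $\RprimeQ_A(P) = \bigoplus_{v \in \Zbb} \RQ_A(P_{*,v})$ and similarly for $\Rprimetau_A$, where the grading index $v$ runs over the $C$-degree (equivalently, we view $P$ through the forgetful functor $\U_A$ as a graded $A$-module with extra structure). So the first thing I would do is observe that the tensor product $N \otimes_{\Bcal}^{\LD} M$, when restricted along $(-)_{*,v}$ to a graded right $A$-module, decomposes compatibly: for each fixed output $C$-degree $v$, the graded $A$-module $(N \otimes_{\Bcal}^{\LD} M)_{*,v}$ is computed from the $A$-module structure on $M$ alone (the $B$-variable is contracted, the $A$-variable on $M$ is untouched, and the $C$-variable is just bookkeeping). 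This is exactly the content of the bigraded tensor product formula $(N \otimes_{\Bcal} M)(C',A') = N(C',-)\otimes_{\Bcal} M(-,A')$ recalled in \cref{subsec:dgcat}, combined with $\Q_A' \simeq \Q_A \circ \U_A$ from \cite[Lemma 3.17]{ballard2021kernels}.

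The key steps, in order: (i) replace $M$ and $N$ by h-projective (semi-free) resolutions over the relevant dg categories so that $\otimes_{\Bcal}^{\LD}$ is computed by the underived $\otimes_{\Bcal}$ and $\RprimeQ_A$, $\Rprimetau_A$ are computed on fibrant replacements — here I would lean on the hypothesis that $\RQ_A$ and $\Rtau_A$ commute with coproducts, which lets me move these functors past the direct sum $\bigoplus_v$ defining $\RprimeQ_A$; (ii) for fixed $C$-degree $v$, identify $(N \otimes_{\Bcal} M)_{*,v}$ as a graded right $A$-module built functorially from the graded right $A$-module underlying $M$ — concretely it is a direct sum/colimit over $B$-degrees of copies of pieces of $M_{*,\bullet}$ tensored with vector spaces coming from $N$, all of which are $A$-linear operations; (iii) apply the one-sided projection formula \cite[Proposition 4.5]{ballard2021kernels} (equivalently the statement that $\RQ_A$ and $\Rtau_A$ commute with the $A$-linear operations in (ii), since tensoring over $k$ with a vector space and taking coproducts are both handled by the coproduct-commutation hypothesis) to get $\RprimeQ_A((N\otimes_{\Bcal} M)_{*,v}) \simeq (N \otimes_{\Bcal} \RprimeQ_A(M))_{*,v}$; (iv) reassemble over all $v$ and check the identifications are natural, i.e. compatible with the $C^{\op}\otimes_k A$-module structures. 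The triangle $\Rprimetau_A(P) \to P \to \RprimeQ_A(P)$ from \cref{lem:basic properties}\eqref{lem:basic properties 2} then lets me deduce the $\Rprimetau_A$-version from the $\RprimeQ_A$-version (or prove both in parallel), since tensoring with $N$ over $\Bcal$ is exact on the derived level and preserves distinguished triangles.

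The main obstacle I anticipate is step (ii)–(iv): making precise the claim that $\RprimeQ_A$ only "sees" the $A$-module structure and commutes with the $B$-contraction, at the level of complexes and with all the bigraded module structures tracked correctly. The subtlety is that $\otimes_{\Bcal}^{\LD}$ is a derived functor computed via a resolution of $N$ (or $M$) as a dg $\Bcal$-module, and one must check that such a resolution can be chosen so that applying $(-)_{*,v}$ and then a fibrant replacement for $\RQ_A$ commutes with the bar-type complex computing $\otimes_{\Bcal}$ — this is where I would cite the fact that $\RQ_A$, $\Rtau_A$ commute with coproducts to push them inside the (filtered colimit) bar construction, exactly as in the proof of \cite[Proposition 4.5]{ballard2021kernels}. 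Once that commutation is in hand the rest is the same bookkeeping as in \cite{ballard2021kernels}, and I would simply remark that "the proof is parallel to \cite[Proposition 4.5]{ballard2021kernels}, replacing $\RQ_A$ by $\RprimeQ_A$ and using \cite[Lemma 3.17]{ballard2021kernels} to pass between the bigraded functors and their one-sided counterparts."
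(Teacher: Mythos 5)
Your proposal is correct and takes essentially the same route as the paper: the paper likewise computes $N \otimes_{\Bcal}^{\LD} M$ via the bar complex and verifies level-wise that the natural map $N \otimes_k \Bcal^{\otimes n} \otimes_k \RprimeQ_A(M) \to \RprimeQ_A(N \otimes_k \Bcal^{\otimes n} \otimes_k M)$ is an isomorphism, using precisely the coproduct-commutation hypothesis (plus $Q_A$-acyclicity of a $k$-vector space tensored with a fibrant piece of $M$) that you identify as the crux. The paper writes out only the $\RprimeQ_A$ formula; your deduction of the $\Rprimetau_A$ statement from the distinguished triangle is a reasonable way to supply the remaining half.
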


\begin{proof}
    \label{proof:bimodule proj formula}
    The proof is similar to the proof of \cite[Proposition 4.5]{ballard2021kernels}.
    However, for the sake of completeness, we give a proof here.
    We only prove the second formula.
    Firstly, note that $N \otimes_{\Bcal}^{\LD} M$ is calculated by using the bar complex 
    \begin{align*}
    \cdots \rightarrow N \otimes_k \Bcal \otimes_k \Bcal \otimes_k M \rightarrow N \otimes_k \Bcal \otimes_k M \rightarrow N \otimes_{k} M \rightarrow 0.
    \end{align*}
    Here, $N \otimes_k \Bcal^{\otimes n} \otimes_k M$ is a dg $\Ccal$-$\Acal$-bimodule defined by
    \begin{align*}
        (N \otimes_k \Bcal^{\otimes n} \otimes_k M)_{i,j} = \bigoplus_{i_0, \cdots, i_n \in \Zbb} N_{i,i_0} \otimes_k \Bcal(i_0,i_1) \otimes_k \cdots \otimes_k \Bcal(i_{n-1},i_n) \otimes_k M_{i_n,j},
    \end{align*}
    where $\Bcal(i,j) := \Hom_{\Bcal}(i,j) \ (i,j \in \Zbb)$ (cf. \cite[Section 6.6]{keller1994deriving}).
    We show that we have a natural morphism 
    \begin{align*}
      \varphi_n:N \otimes_k \Bcal^{\otimes n} \otimes_k \left(\bigoplus_{v \in \Zbb} Q'_A(R_A(M_{*,v}))\right) \rightarrow \bigoplus_{v \in \Zbb} Q'_A(N \otimes_k \Bcal^{\otimes n} \otimes_k R_A(M_{*,v}))
    \end{align*}
    and this map is an isomorphism for any $n \in \Nbb$.
    For simplicity, we put 
    \begin{align*}
        &P_j = R_A(M_{*,j}), \\
        &V_{i_0, \cdots, i_n}^i = N_{i,i_0} \otimes_k \Bcal(i_0,i_1) \otimes_k \cdots \otimes_k \Bcal(i_{n-1},i_n).
    \end{align*}
    Then, we have the following:
    \begin{align*}
        \left(N \otimes_k \Bcal^{\otimes n} \otimes_k \left(\bigoplus_{v \in \Zbb} Q'_A(R_A(M_{*,v}))\right)\right)_{i,j} = \varinjlim_n \bigoplus_{i_0, \cdots, i_n \in \Zbb} V_{i_0, \cdots, i_n}^i \otimes_k \Hom_{\Gr(A)} (A_{\geq n}, P_j(i_n)), \\
        \left(\bigoplus_{v \in \Zbb} Q'_A(N \otimes_k \Bcal^{\otimes n} \otimes_k R_A(M_{*,v}))\right)_{i,j} = \varinjlim_n \Hom_{\Gr(A)}\left(A_{\geq n}, \bigoplus_{i_0, \cdots, i_n \in \Zbb} V_{i_0, \cdots, i_n}^i \otimes_k P_j(i_n)\right).
    \end{align*}
    So, the map 
    \begin{align*}
        \bigoplus_{i_0, \cdots, i_n \in \Zbb} V_{i_0, \cdots, i_n}^i \otimes_k \Hom_{\Gr(A)} (A_{\geq n}, P_j(i_n)) &\rightarrow \Hom_{\Gr(A)}\left(A_{\geq n}, \bigoplus_{i_0, \cdots, i_n \in \Zbb} V_{i_0, \cdots, i_n}^i \otimes_k P_j(i_n)\right), \\
        n \otimes b_{i_0,i_1} \otimes \cdots \otimes b_{i_{n-1},i_n} \otimes f &\mapsto \left(a \mapsto n \otimes b_{i_0,i_1} \otimes \cdots \otimes b_{i_{n-1},i_n} \otimes f(a)\right)
    \end{align*}
    defines the desired levelwise morphisms $\varphi_n$.
    The assumption that $\RQ_A$ commutes with coproducts implies each $\varphi_n$ is an isomorphism because $V_{i_0, \cdots, i_n}^i \otimes_k P_j(i_n)$ is $Q_A$-acyclic (cf. \cite[Lemma 3.27]{ballard2021kernels}).
\end{proof}

The following lemma is a generalization of \cite[Proposition 3.42]{ballard2021kernels} in arbitrary graded finite $A$ and $A^{\op}$-modules.

\begin{lem}
    \label{lem:a good formula fo fin mods}
Let $A$ be a noetherian locally finite $\Nbb$-graded $k$-algebra.
Let $M \in \Gr(A^{\en}) $ be a finitely generated graded right and left $A$-module.
We assume that $A$ has a balanced  dualizing complex.
Then, we have quasi-isomorphisms in $D(\Gr(A^{\en}))$
\begin{align*}
    \RprimeQ_A(M^{\bgm}) \simeq \RQ_{A \otimes_k A^{\op}}(M^{\bgm}) \simeq \RprimeQ_{A^{\op}}(M^{\bgm}),
\end{align*}
where $M^{\bgm}$ is the bimodule defined by $M^{\bgm} := \bigoplus_{i,j \in \Zbb} M^{\bgm}_{i,j} = \bigoplus_{i,j \in \Zbb} M_{i+j}$.
\end{lem}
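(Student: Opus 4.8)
The plan is to derive all three quasi-isomorphisms from the ``mixed'' composition formula in item~(3) of \cref{lem:basic properties}, together with the fact that the functors $\RprimeQ_A$ and $\RprimeQ_{A^{\op}}$ of \cref{def:R'Q functor} annihilate any object of $D(\BiGr(A\otimes_k A^{\op}))$ whose slices $(-)_{*,v}$, resp.\ $(-)_{u,*}$, have torsion cohomology over $A$, resp.\ over $A^{\op}$. First I would record that \cref{lem:basic properties} applies with $B=A^{\op}$: being noetherian, $A$ is finitely generated as a $k$-algebra and right coherent, hence satisfies (EF) by \cref{rmk:EF condition}, and the same holds for $A^{\op}$; and by \cref{thm:balanced dualizing complex} the existence of a balanced dualizing complex for $A$ is equivalent to the left--right symmetric conditions $\chi$, $\chi^{\op}$ together with finite cohomological dimension of $\torfunct{A}$ and $\torfunct{A^{\op}}$, so $A^{\op}$ also has a balanced dualizing complex, $A^{\op}$ satisfies the $\chi$-condition, and $\tau_A,\tau_{A^{\op}}$ have finite cohomological dimension. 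Hence item~(3) of \cref{lem:basic properties} gives, in $D(\BiGr(A\otimes_k A^{\op}))$,
\[
\RQ_{A\otimes_k A^{\op}}(M^{\bgm})\ \simeq\ \RprimeQ_A\circ\RprimeQ_{A^{\op}}(M^{\bgm})\ \simeq\ \RprimeQ_{A^{\op}}\circ\RprimeQ_A(M^{\bgm}),
\]
so it suffices to prove (A) $\RprimeQ_{A^{\op}}\circ\RprimeQ_A(M^{\bgm})\simeq\RprimeQ_{A^{\op}}(M^{\bgm})$ and (B) $\RprimeQ_A\circ\RprimeQ_{A^{\op}}(M^{\bgm})\simeq\RprimeQ_A(M^{\bgm})$; chaining (B), the displayed formula, and (A) then yields the assertion.

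For (A) I would apply $\RprimeQ_{A^{\op}}$ to the triangle $\Rprimetau_A(M^{\bgm})\to M^{\bgm}\to\RprimeQ_A(M^{\bgm})$ from item~(1) of \cref{lem:basic properties}; then (A) is equivalent to the acyclicity of $\RprimeQ_{A^{\op}}(\Rprimetau_A(M^{\bgm}))$. Unwinding the definitions, $(M^{\bgm})_{*,v}\simeq M(v)$ as graded right $A$-modules, and since the degree-shift functor commutes with derived functors this gives $\Rprimetau_A(M^{\bgm})\simeq(\Rtau_A(M))^{\bgm}$, so that
\[
\RprimeQ_{A^{\op}}\bigl(\Rprimetau_A(M^{\bgm})\bigr)\ \simeq\ \bigoplus_{u\in\Zbb}\RQ_{A^{\op}}\bigl(\Rtau_A(M)(u)\bigr).
\]
Now $M$ is finite as a right $A$-module and $A$ satisfies the $\chi$-condition, so every $\R^i\tau_A(M)$ is right bounded (\cite[Proposition 3.14]{artin1994noncommutative}), hence torsion as a graded right $A^{\op}$-module; since $\tau_A$ has finite cohomological dimension, $\Rtau_A(M)$ is a bounded complex of graded right $A^{\op}$-modules with torsion cohomology. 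For any such complex $N$, the triangle $\Rtau_{A^{\op}}(N)\to N\to\RQ_{A^{\op}}(N)$ of item~(1) of \cref{lem:basic properties} together with the vanishing of $\R^i\tau_{A^{\op}}$ on torsion modules for $i>0$ (\cref{rmk:EF condition}) shows that $\Rtau_{A^{\op}}(N)\to N$ is an isomorphism, hence $\RQ_{A^{\op}}(N)\simeq0$; applied to $N=\Rtau_A(M)(u)$ this gives the desired acyclicity and proves (A). Statement (B) follows in exactly the same way after interchanging $A$ and $A^{\op}$, using that $M$ is finite also as a left $A$-module, i.e.\ as a right $A^{\op}$-module, and that $A^{\op}$ satisfies the $\chi$-condition.

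The step I expect to be the real obstacle is the bookkeeping behind the identification $\Rprimetau_A(M^{\bgm})\simeq(\Rtau_A(M))^{\bgm}$, together with the closely related point that ``right boundedness of $\R^i\tau_A(M)$'' --- a statement about the single $\Zbb$-grading with respect to which $\tau_A$ is computed --- forces torsionness for the right $A^{\op}$-module structure governing $\RprimeQ_{A^{\op}}$. Since $M$ carries one $\Zbb$-grading compatible with both its left and its right $A$-action, this reduces to a careful but routine matching of the two gradings on $M^{\bgm}$; once it is in place, the rest is a formal manipulation of the triangles and idempotence formulas of \cref{lem:basic properties}, mirroring the proof of \cite[Proposition 3.42]{ballard2021kernels}, of which the lemma is the stated generalization.
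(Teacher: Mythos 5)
Your proposal is correct and takes essentially the same route as the paper: both reduce the statement, via the composition formula in item (3) of \cref{lem:basic properties} and the triangle $\Rprimetau_A \to \mathrm{id} \to \RprimeQ_A$, to the acyclicity of $\RprimeQ_{A^{\op}}(\Rprimetau_A(M^{\bgm}))$, and both obtain that acyclicity by identifying the slices of $\Rprimetau_A(M^{\bgm})$ with shifts of $\Rtau_A(M)$ and using that each $\R^j\tau_A(M)$ is right bounded (hence torsion) because $M$ is finite and $A$ satisfies the $\chi$-condition. The only cosmetic differences are the citation for right-boundedness (the paper uses \cite[Corollary 3.6(3)]{artin1994noncommutative} rather than Proposition 3.14) and that you explicitly verify the (EF) and finite-cohomological-dimension hypotheses, which the paper leaves implicit.
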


\begin{proof}
    \label{proof:a good formula for fin mods}
    The proof is similar to the proof of \cite[Proposition 3.42]{ballard2021kernels}.
    However, for the sake of completeness, we give the proof here.


   For symmetry, we only show the first quasi-isomorphism.
    For any $N \in \BiGr(A^{\en})$, the natural morphism
    \begin{align*}
    \eta:\Rprimetau_{A^{\op}}(N) \rightarrow N
    \end{align*}
    is a quasi-isomorphism if and only if the natural morphism
    \begin{align*}
       \eta_{u,*}:\Rtau_{A^{\op}}(N_{u,*}) \rightarrow N_{u,*}
    \end{align*}
    is a quasi-isomorphism for any $u \in \Zbb$.
    In addition, if the $j$-th cohomology of $N_{u,*}$ is right bounded for each $j$, then $\eta_{u,*}$ is a quasi-isomorphism.
    We condier the case $N=\Rprimetau_A(M^{\bgm})$.
    In this case, 
    \[
        N_{u,*} = \bigoplus_{v' \in \Zbb} \Rtau_A \left(M^{\bgm}_{*,v'} \right)_u.
    \]
    Then, we have the following isomorphism in $D(\BiGr(A^{\en}))$
    \begin{align*}
        H^j(N_{u,*}) = \bigoplus_{v' \in \Zbb} \R^j\tau_A \left(M^{\bgm}_{*,v'} \right)_u \simeq \bigoplus_{v' \in \Zbb} \R^j\tau_A (M(v'))_u.
    \end{align*}
    From \cite[Corollary 3.6 (3)]{artin1994noncommutative} and the fact that $\tau_A$ commutes with the degree shift, we have  
    \[
        \R^j\tau_A (M(v'))_u = \R^j\tau_A(M)_{u+v'} = 0
    \]
     for $v' \gg 0$ (we use the assumption that $M$ is finitely generated here).
    So, 
    \[
         \Rtau_{A^{\op}} \left(\bigoplus_{v' \in \Zbb} \Rtau_A \left(M^{\bgm}_{*,v'} \right)_u \right) \rightarrow \bigoplus_{v' \in \Zbb} \Rtau_A \left(M^{\bgm}_{*,v'} \right)_u
    \]
    is a quasi-isomorphism for any $u$.
    We have the distinguished triangle
    \[
    \Rprimetau_{A^{\op}}\Rprimetau_A (M^{\bgm}) \rightarrow \Rprimetau_A (M^{\bgm}) \rightarrow \RprimeQ_{A^{\op}}\Rprimetau_A (M^{\bgm}).
    \]
    So, we also have $\RQ_{A^{\op}}\Rtau_A (M^{\bgm}) $ is acyclic.
    Finally, because we have the triangle
    \[
    \RprimeQ_{A^{\op}}(\Rprimetau_{A}(M^{\bgm})) \rightarrow \RprimeQ_{A}(M^{\bgm}) \rightarrow \RprimeQ_{A^{\op}}(\RprimeQ_A(M^{\bgm})),
    \]
    we get the desired quasi-isomorphism.
\end{proof}

The following lemma gives a formula for the kernel of the composition of Fourier-Mukai functors.

\begin{lem}
    \label{lem:kernel of compositions of fmt}
    Let $A,B,C$ be noetherian locally finite $\Nbb$-graded $k$-algebras.
    We assume that $A,B,C$ have balanced dualizing complexes.
    Let $\qgrobjname{E} = \pi_{A^{\op} \otimes_k B}(E) \in D(\QGr(A^{\op} \otimes_k B))$ and $\qgrobjname{F} = \pi_{B^{\op} \otimes_k C}(F)\in D(\QGr(B^{\op} \otimes_k C))$, where $E,F$ are objects in $D(\Gr(A^{\op} \otimes_k B))$ and $D(\Gr(B^{\op} \otimes_k C))$, respectively.
    We set 
    \[
    \qgrobjname{G} := \pi_{A^{\op} \otimes_k C}(\Romega_{A^{\op} \otimes_k B}(\qgrobjname{E}) \otimes_{\Bcal}^{\LD} \Romega_{B^{\op} \otimes_k C}(\qgrobjname{F})) \in D(\QBiGr(A^{\op} \otimes_k C)).
    \]
    Then, we have a natural isomorphism
    \begin{align*}
        \Phi_{\qgrobjname{F}} \circ \Phi_{\qgrobjname{E}} \simeq \Phi_{\qgrobjname{G}}.
    \end{align*}
\end{lem}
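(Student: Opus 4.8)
The plan is to unfold both sides into derived tensor products over the dg categories $\Acal$ and $\Bcal$ and to match them by associativity, after absorbing the ``$\RQ$ in the middle'' produced by the composition. First I would set $\tilde{\qgrobjname E}:=\Romega_{A^{\op}\otimes_k B}(\qgrobjname E)$ and $\tilde{\qgrobjname F}:=\Romega_{B^{\op}\otimes_k C}(\qgrobjname F)$; since $\Romega_{?}\circ\pi_{?}\simeq\RQ_{?}$ (cf.\ \cite{ballard2021kernels}) and $\qgrobjname E=\pi_{A^{\op}\otimes_k B}(E)$, $\qgrobjname F=\pi_{B^{\op}\otimes_k C}(F)$, we have $\tilde{\qgrobjname E}\simeq\RQ_{A^{\op}\otimes_k B}(E)$ and $\tilde{\qgrobjname F}\simeq\RQ_{B^{\op}\otimes_k C}(F)$. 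Because $A,B,C$ are noetherian they satisfy (EF) (\cref{rmk:EF condition}), and because they admit balanced dualizing complexes their torsion functors have finite cohomological dimension (\cref{thm:balanced dualizing complex}); hence part~(3) of \cref{lem:basic properties} gives $\RQ_{A^{\op}\otimes_k B}\simeq\RprimeQ_{A^{\op}}\RprimeQ_B\simeq\RprimeQ_B\RprimeQ_{A^{\op}}$, and likewise for the other two pairs. Combined with the idempotence formula \cref{lem:bimodule idempotent formula}, this yields the saturation identities $\RprimeQ_B(\tilde{\qgrobjname E})\simeq\tilde{\qgrobjname E}\simeq\RprimeQ_{A^{\op}}(\tilde{\qgrobjname E})$ and $\RprimeQ_{B^{\op}}(\tilde{\qgrobjname F})\simeq\tilde{\qgrobjname F}\simeq\RprimeQ_C(\tilde{\qgrobjname F})$.

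Next I would compute the composition. Writing $\qgrobjname Y:=\Romega_A(-)\otimes_{\Acal}^{\LD}\tilde{\qgrobjname E}$, the definitions of the Fourier-Mukai functors and $\Romega_B\circ\pi_B\simeq\RQ_B$ give $\Phi_{\qgrobjname F}\circ\Phi_{\qgrobjname E}\simeq\pi_C\big(\RQ_B(\qgrobjname Y)\otimes_{\Bcal}^{\LD}\tilde{\qgrobjname F}\big)$. The right $B$-module structure on $\qgrobjname Y$ is inherited entirely from $\tilde{\qgrobjname E}$, so the projection formula \cref{lem:bimodule proj formula} (applied with the algebra denoted $C$ there taken to be $k$, so that $N=\Romega_A(-)$ is an ordinary graded $A$-module) together with part~(2) of \cref{lem:basic properties} yields $\RQ_B(\qgrobjname Y)\simeq\Romega_A(-)\otimes_{\Acal}^{\LD}\RprimeQ_B(\tilde{\qgrobjname E})\simeq\qgrobjname Y$. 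Hence $\Phi_{\qgrobjname F}\circ\Phi_{\qgrobjname E}\simeq\pi_C\big((\Romega_A(-)\otimes_{\Acal}^{\LD}\tilde{\qgrobjname E})\otimes_{\Bcal}^{\LD}\tilde{\qgrobjname F}\big)$.

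On the other hand $\Romega_{A^{\op}\otimes_k C}(\qgrobjname G)\simeq\RQ_{A^{\op}\otimes_k C}(\tilde{\qgrobjname E}\otimes_{\Bcal}^{\LD}\tilde{\qgrobjname F})\simeq(\RprimeQ_{A^{\op}}\RprimeQ_C)(\tilde{\qgrobjname E}\otimes_{\Bcal}^{\LD}\tilde{\qgrobjname F})$ by part~(3) of \cref{lem:basic properties}. Moving $\RprimeQ_C$ through the right factor $\tilde{\qgrobjname F}$ via \cref{lem:bimodule proj formula}, and moving $\RprimeQ_{A^{\op}}$ through the left factor $\tilde{\qgrobjname E}$ via the evident left-handed analogue of that lemma (same bar-complex argument, or pass to opposite algebras), and then invoking the saturation identities, I obtain $\RQ_{A^{\op}\otimes_k C}(\tilde{\qgrobjname E}\otimes_{\Bcal}^{\LD}\tilde{\qgrobjname F})\simeq\tilde{\qgrobjname E}\otimes_{\Bcal}^{\LD}\tilde{\qgrobjname F}$, so that $\Phi_{\qgrobjname G}\simeq\pi_C\big(\Romega_A(-)\otimes_{\Acal}^{\LD}(\tilde{\qgrobjname E}\otimes_{\Bcal}^{\LD}\tilde{\qgrobjname F})\big)$. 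By associativity of the derived tensor product of dg bimodules over dg categories this coincides with the formula found for $\Phi_{\qgrobjname F}\circ\Phi_{\qgrobjname E}$, and since every isomorphism used above is natural in the argument we get the asserted isomorphism of functors.

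I expect the only delicate part to be the bookkeeping: tracking left/right module structures and recording, for each occurrence of $\RprimeQ$, $\Rprimetau$ and $\pi$, which graded (or bigraded) component it acts on, and checking that the standing (EF)/finite-cohomological-dimension hypotheses of the auxiliary lemmas are in force (they are, by \cref{rmk:EF condition} and \cref{thm:balanced dualizing complex}). The sole ingredient not stated verbatim in the text, the left-handed version of \cref{lem:bimodule proj formula}, is proved exactly as the stated one.
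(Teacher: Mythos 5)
Your proposal is correct and follows essentially the same route as the paper's proof: pull the middle $\RQ_B$ through the tensor product by the projection formula (\cref{lem:bimodule proj formula}), absorb it using $\RQ_{A^{\op}\otimes_k B}\simeq\RprimeQ_{A^{\op}}\circ\RprimeQ_B$ together with the idempotence of \cref{lem:bimodule idempotent formula}, identify $\Romega_{A^{\op}\otimes_k C}\circ\pi_{A^{\op}\otimes_k C}$ of the product kernel with the product kernel itself by the same devices, and finish by associativity. The only point where you are more explicit than the paper is in flagging the left-handed variant of \cref{lem:bimodule proj formula} needed to move $\RprimeQ_{A^{\op}}$ through the left factor (the paper subsumes this under ``using the lemmas repeatedly''), and your justification that it follows from the same bar-complex argument is sound.
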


\begin{proof}
    \label{proof:kernel of compositions of fmt}
    We have the claim by the following calculation:
\begin{align*}
    (\Phi_{\qgrobjname{F}} \circ \Phi_{\qgrobjname{E}})(-) 
    &\simeq \pi_C(\RQ_B(\Romega_A(-) \otimes_{\Acal}^{\LD} \Romega_{A^{\op} \otimes_k B}(\qgrobjname{E})) \otimes_{\Bcal}^{\LD} \Romega_{B^{\op} \otimes_k C}(\qgrobjname{F})) \\
    &\simeq \pi_C((\Romega_A(-) \otimes_{\Acal}^{\LD} \RprimeQ_B\Romega_{A^{\op} \otimes_k B}(\qgrobjname{E})) \otimes_{\Bcal}^{\LD} \Romega_{B^{\op} \otimes_k C}(\qgrobjname{F}))   \\
    &\simeq \pi_C((\Romega_A(-) \otimes_{\Acal}^{\LD} \RprimeQ_B\RprimeQ_B\RprimeQ_A(E)) \otimes_{\Bcal}^{\LD} \Romega_{B^{\op} \otimes_k C}(\qgrobjname{F})) \\
    &\simeq \pi_C((\Romega_A(-) \otimes_{\Acal}^{\LD} \RprimeQ_B\RprimeQ_A(E)) \otimes_{\Bcal}^{\LD} \Romega_{B^{\op} \otimes_k C}(\qgrobjname{F}))  \\
    &\simeq \pi_C((\Romega_A(-) \otimes_{\Acal}^{\LD} \Romega_{A^{\op} \otimes_k B}(\qgrobjname{E})) \otimes_{\Bcal}^{\LD} \Romega_{B^{\op} \otimes_k C}(\qgrobjname{F})) \\
    &\simeq \pi_C(\Romega_A(-) \otimes_{\Acal}^{\LD} (\Romega_{A^{\op} \otimes_k B}(\qgrobjname{E}) \otimes_{\Bcal}^{\LD} \Romega_{B^{\op} \otimes_k C}(\qgrobjname{F}))) \\
    &\simeq \pi_C(\Romega_A(-) \otimes_{\Acal}^{\LD} \Romega_{A^{\op} \otimes_k C}(\pi_{A^{\op} \otimes_k C}(\Romega_{A^{\op} \otimes_k B}(\qgrobjname{E}) \otimes_{\Bcal}^{\LD} \Romega_{B^{\op} \otimes_k C}(\qgrobjname{F})))) \\
    &\simeq \Phi_{\qgrobjname{G}}(-).
\end{align*}
The second isomorphism is obtained by using \cref{lem:bimodule proj formula}.
The third and fifth isomorphisms are obtained by using (3) of \cref{lem:basic properties}.
The fourth isomorphism is obtained by using \cref{lem:bimodule idempotent formula}.
The sixth isomorphism is obtained by using the associativity of tensor products.
The seventh isomorphism is obtained by using (3) of \cref {lem:basic properties}, \cref{lem:bimodule idempotent formula} and \cref{lem:bimodule proj formula} repeatedly.
\end{proof}


\begin{lem}
    \label{lem:applying re-main2}
    Let $A$ be a noetherian locally finite $\Nbb$-graded $k$-algebra.
    We assume that $A$ has a balanced dualizing complex, $\qgr (A)$ has a canonical bimodule $\canmod_A$ and the globel dimension $n$ of $\qgr (A)$ is larger than $0$.
    Then, $\R^1\torfunct{A}(A)$ is a finite $A$-module.
\end{lem}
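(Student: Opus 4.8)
The plan is to play off the two descriptions of the Serre functor of $D^{b}(\qgr(A))$ that are available under the hypotheses. First, since $\qgr(A)$ carries the canonical bimodule $\canmod_{A}$, \cref{rmk:canonical bimodule} tells us that $\gldim(\qgr(A))=n<\infty$ and that $-\otimes\canmod_{A}[n]$ is a Serre functor. Second, since $A$ has a balanced dualizing complex $R_{A}$ and $\gldim(\qgr(A))<\infty$, the theorem recalled above (the locally finite analogue of \cite[Theorem A.4]{de2004ideal}) says that $\Scal_{A}(\pi_{A}(M))=\pi_{A}(M\otimes_{A}^{\LD}R_{A})[-1]$ is also a Serre functor. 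As Serre functors are unique up to isomorphism, these agree, so that $-\otimes\canmod_{A}\simeq\pi_{A}(-\otimes_{A}^{\LD}R_{A})[-1-n]$ as autoequivalences of $D^{b}(\qgr(A))$.

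Next I would evaluate this isomorphism at $\Ocal_{A}=\pi_{A}(A)$, which gives $\Ocal_{A}\otimes\canmod_{A}\simeq\pi_{A}(R_{A})[-1-n]$ in $D^{b}(\qgr(A))$. The key point is that $\canmod_{A}$ is \emph{invertible}, so $-\otimes\canmod_{A}$ is an exact autoequivalence of the abelian category $\qgr(A)$; hence the left-hand side lies in the heart, i.e.\ in cohomological degree $0$. Therefore $\pi_{A}(R_{A})$ is cohomologically concentrated in degree $-1-n$, and since $\pi_{A}$ is exact this means $\pi_{A}(H^{i}(R_{A}))=0$, i.e.\ $H^{i}(R_{A})\in\Tor(A)$, for all $i\neq -1-n$. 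Here the assumption $n>0$ is used decisively: it forces $-1\neq -1-n$, so $H^{-1}(R_{A})$ is a torsion graded right $A$-module. Combining this with the second condition in \cref{def:dualizing complex} (the cohomologies of $R_{A}$ are finitely generated), $H^{-1}(R_{A})$ is finitely generated and torsion, hence annihilated by $A_{\geq N}$ for $N\gg 0$ and so finite dimensional over $k$.

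Finally I would transport this back to local cohomology via \cref{thm:balanced dualizing complex}, which identifies $R_{A}=\Rtorfunct{A}(A)'$; since graded Matlis duality is an exact contravariant functor we get $H^{i}(R_{A})\simeq(\R^{-i}\torfunct{A}(A))'$ for every $i$, so in particular $(\R^{1}\torfunct{A}(A))'\simeq H^{-1}(R_{A})$ is finite dimensional over $k$. As a graded vector space is finite dimensional precisely when its Matlis dual is, $\R^{1}\torfunct{A}(A)$ is itself finite dimensional over $k$, hence a finite $A$-module. The main obstacle here is conceptual rather than computational: one has to notice that the canonical-bimodule hypothesis is exactly what makes the Serre functor a shift of an \emph{abelian} autoequivalence, which is what licenses the single-degree concentration of $\pi_{A}(R_{A})$ in Step~2; once that is in place, the remaining ingredients (the Matlis-duality bookkeeping and "finitely generated $+$ torsion $\Rightarrow$ finite dimensional") are routine. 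I would also take care to check the shift "$-1-n$" and to keep track that the torsion/finiteness statements above are all taken with respect to the right $A$-module structure, but neither point presents a real difficulty.
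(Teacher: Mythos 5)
Your proof is correct, but it takes a genuinely different route from the paper's. The paper argues at the level of Hom spaces: using that $-\otimes\canmod_A[n]$ is a Serre functor, it rewrites $H^0(\qgr(A),\Ocal_A(m))=\Hom(\Ocal_A,\Ocal_A(m))$ as (the dual of) $H^n(\qgr(A),\canmod_A(-m))$, kills the latter for $m\ll 0$ by the noncommutative Serre vanishing theorem (this is where $n>0$ enters for the paper), and then feeds the resulting left-boundedness of $\Q_A(A)$ into the four-term exact sequence $0\to\torfunct{A}(A)\to A\to \Q_A(A)\to\R^1\torfunct{A}(A)\to 0$, combined with right-boundedness of $\R^1\torfunct{A}(A)$ from the $\chi$-condition. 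You instead work directly with the dualizing complex: uniqueness of Serre functors forces $\pi_A(R_A)$ to be concentrated in cohomological degree $-(n+1)$, so for $n>0$ the cohomology $H^{-1}(R_A)$ is torsion as well as finitely generated, hence finite-dimensional, and local duality $R_A=\Rtorfunct{A}(A)'$ translates this into finiteness of $\R^1\torfunct{A}(A)$. Both arguments are sound and use the same two hypotheses (canonical bimodule plus balanced dualizing complex) in essentially dual ways; your version is more structural and avoids the Serre vanishing theorem and the exact sequence $(\star)$, at the cost of invoking the identification $\Scal_A\simeq\pi_A(-\otimes^{\LD}_A R_A)[-1]$ and the Matlis-duality bookkeeping — facts the paper only deploys later, in \cref{rmk:D-eq and dimension}, where it records exactly your concentration statement $\pi_A(R_A)\simeq\pi_A(H^{-(n+1)}(R_A))[n+1]$.
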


\begin{proof}
    \label{proof:applying re-main2}
    We calculate $H^0(\qgr (A), \Ocal_A(m))$ as follows:
    \begin{align*}
        H^0(\qgr (A), \Ocal_A(m)) &= \Hom_{\qgr (A)}(\Ocal_A, \Ocal_A(m)) \\
        &= \Hom_{\qgr (A)}(\Ocal_A(m), \canmod_A[n]) \\
        &= \Hom_{\qgr (A)}(\Ocal_A, \canmod_A(-m)[n]) \\
        &= H^n(\qgr (A),  \canmod_A(-m)),
    \end{align*}
    where we simply write $\canmod_A$ for $\Ocal_A \otimes \canmod_A$.
    So, from the noncommutative Serre vanishing theorem \cite[Theorem 7.4]{artin1994noncommutative}, we have $ H^0(\qgr (A), \Ocal_A(m))=0$ for $m \ll 0$.
    This induces that $\R^1\torfunct{A}(A)$ is a finite $A$-module from \eqref{eq:canonical exact sequence}.
\end{proof}

\begin{rmk}
    From this lemma, we can apply \cref{thm:re-main2} (and \cref{prop:re-main2}) in our setting.
\end{rmk}

\begin{lem}
    \label{lem:fm-kernel of tensor products}
    Let $A$ be a locally finite noetherian $\Nbb$-graded $k$-algebra with a balanced dualizing complex. 
    Let $M \in D(\Gr(A^{\en}))$.
    Then, the functor 
    \begin{align*}
      \pi_{A}(- \otimes^{\LD}_{A} M) : D(\QGr A) \rightarrow D(\QGr A),\quad  \pi_A(N) \mapsto  \pi_{A}(N \otimes^{\LD}_{A} M)
    \end{align*}
    is naturally isomorphic to $\Phi_{\pi_{A^{\en}}(M^{\bgm})}$.
\end{lem}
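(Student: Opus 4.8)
The plan is to unwind the definition of $\Phi_{\pi_{A^{\en}}(M^{\bgm})}$ and reduce the statement, by a chain of natural isomorphisms in the style of the proof of \cref{lem:kernel of compositions of fmt}, to the basic fact that tensoring over the dg category $\Acal$ with the bimodule $M^{\bgm}$ computes the $\Zbb$-algebra tensor product $-\otimes_A M$. Throughout I identify $A^{\op}\otimes_k A$ with $A^{\en}$, so that $\pi_{A^{\en}}(M^{\bgm})$ is viewed as an object of $D(\QBiGr(A^{\op}\otimes_k A))$ and $\Romega_{A^{\op}\otimes_k A}(\pi_{A^{\en}}(M^{\bgm}))\simeq\RQ_{A^{\en}}(M^{\bgm})$. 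First I would record that, for $N\in D(\Gr(A))$, using $\Romega_A\circ\pi_A\simeq\RQ_A$,
\[
\Phi_{\pi_{A^{\en}}(M^{\bgm})}(\pi_A(N))\;=\;\pi_A\bigl(\Romega_A(\pi_A(N))\otimes_{\Acal}^{\LD}\RQ_{A^{\en}}(M^{\bgm})\bigr)\;\simeq\;\pi_A\bigl(\RQ_A(N)\otimes_{\Acal}^{\LD}\RQ_{A^{\en}}(M^{\bgm})\bigr).
\]
In this tensor product the grading of $M^{\bgm}$ carrying the left $A$-module structure of $M$ is contracted, while the grading carrying the right $A$-module structure of $M$ survives and is acted on by $\pi_A$. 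Since $A$ has a balanced dualizing complex it satisfies (EF), so $\RQ_A$ and $\Rtau_A$ commute with coproducts by (2) of \cref{lem:basic properties}, and (3) of \cref{lem:basic properties} gives $\RQ_{A^{\en}}(M^{\bgm})\simeq\RprimeQ_A\bigl(\RprimeQ_{A^{\op}}(M^{\bgm})\bigr)$, with $\RprimeQ_A$ acting along the surviving grading and $\RprimeQ_{A^{\op}}$ along the contracted one.

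The next step is to peel off these two $\RprimeQ$'s. By \cref{lem:bimodule proj formula} the $\RprimeQ_A$ (which is along a grading not involved in $-\otimes_{\Acal}-$) may be pulled out of the tensor; since $\RQ_A(N)\otimes_{\Acal}^{\LD}(-)$ is singly graded along the surviving direction, this $\RprimeQ_A$ reduces there to $\RQ_A$ and is absorbed by $\pi_A\circ\RQ_A\simeq\pi_A$, leaving $\pi_A\bigl(\RQ_A(N)\otimes_{\Acal}^{\LD}\RprimeQ_{A^{\op}}(M^{\bgm})\bigr)$. For the remaining $\RprimeQ_{A^{\op}}$, which sits along the contracted direction, I would use the triangle $\Rprimetau_{A^{\op}}(M^{\bgm})\to M^{\bgm}\to\RprimeQ_{A^{\op}}(M^{\bgm})$ and the fact (cf. \cref{lem:bimodule idempotent formula}) that $\pi_A$ annihilates the contribution of the torsion term; via the identification $\Rprimetau_{A^{\op}}(M^{\bgm})\simeq(\Rtau_{A^{\op}}(M))^{\bgm}$ this amounts to the vanishing in $D(\QGr A)$ of $\RQ_A(N)\otimes_A^{\LD}\Rtau_{A^{\op}}(M)$. (When $M$ is finitely generated as a bimodule one can instead bypass this paragraph, since \cref{lem:a good formula fo fin mods} already gives $\RQ_{A^{\en}}(M^{\bgm})\simeq\RprimeQ_A(M^{\bgm})$.) Either way one is left with $\pi_A\bigl(\RQ_A(N)\otimes_{\Acal}^{\LD}M^{\bgm}\bigr)$, and then, since the dg tensor product over $\Acal$ agrees with the $\Zbb$-algebra tensor product (as recalled at the beginning of \cref{sec:FM in ncproj}) and $M^{\bgm}$ corresponds to $M$ under this identification, this equals $\pi_A\bigl(\RQ_A(N)\otimes_A^{\LD}M\bigr)\simeq\pi_A\bigl(N\otimes_A^{\LD}M\bigr)$. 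Naturality is immediate, since every step above is a natural isomorphism of functors.

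The hard part will be the grading bookkeeping: $M^{\bgm}$ naturally lives over $A^{\en}=A\otimes_kA^{\op}$ whereas the Fourier--Mukai formalism expects a kernel over $A^{\op}\otimes_kA$, so one must keep careful track of which of the two $\Zbb$-gradings is contracted by $-\otimes_{\Acal}-$ and which survives $\pi_A$, and correspondingly which of $\RprimeQ_A$, $\RprimeQ_{A^{\op}}$, $\Rprimetau_A$, $\Rprimetau_{A^{\op}}$ acts on which factor, so that \cref{lem:bimodule proj formula} is applied in the correct direction. The remaining, more routine, point is the vanishing of $\pi_A$ on the tensor products involving the $\Rtau$-terms, which one extracts from the fact that $\RQ_A(N)$ lies in the image of $\Romega_A$ together with the boundedness properties supplied by the balanced dualizing complex.
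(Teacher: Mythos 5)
Your overall architecture is reasonable and your direction-bookkeeping is correct, but the two places where the argument actually has content are both glossed over, and one of them is a genuine gap. After pulling the surviving-direction $\RprimeQ_A$ out by \cref{lem:bimodule proj formula}, you are left needing two vanishing statements: (a) $\pi_A\bigl(\RQ_A(N)\otimes_{\Acal}^{\LD}\Rprimetau_{A^{\op}}(M^{\bgm})\bigr)\simeq 0$, and (b) $\pi_A\bigl(\Rtau_A(N)\otimes_{A}^{\LD}M\bigr)\simeq 0$ (the latter is hidden in your final ``$\pi_A(\RQ_A(N)\otimes_A^{\LD}M)\simeq\pi_A(N\otimes_A^{\LD}M)$''). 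For (a), \cref{lem:bimodule idempotent formula} is not the relevant tool: it is an idempotency statement along a \emph{single} grading direction, whereas (a) concerns the interaction between saturation of the left tensor factor and torsion along the \emph{contracted} grading of the right factor. Such a statement can be extracted under the balanced-dualizing-complex hypothesis (e.g.\ via $\Rtau_{A^{\op}}(M)\simeq\Rtau_{A^{\op}}(A)\otimes_A^{\LD}M$, $\Rtau_{A^{\op}}(A)\simeq\Rtau_A(A)$, and $\RQ_A(N)\otimes_A^{\LD}\Rtau_A(A)\simeq\Rtau_A(\RQ_A(N))\simeq 0$), but that is a nontrivial chain you do not supply. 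For (b), note that $\Rtau_A(N)\otimes_A^{\LD}M$ is \emph{not} torsion for a general $M\in D(\Gr(A^{\en}))$ (take $M=A\otimes_kA$ and $N$ torsion: then $N\otimes_A^{\LD}M\simeq N\otimes_kA$ is free-like, not torsion); one needs the cohomology of $M$ to be finitely generated on the left, which is also what makes the functor in the statement well defined and is satisfied in all applications.

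The paper's proof avoids both issues by a different order of operations: it first replaces $\RQ_A(N)$ by $N$ \emph{while the kernel is still fully saturated}, using the acyclicity of $\Rtau_A(N)\otimes_{\Acal}^{\LD}\RQ_{A^{\en}}(M^{\bgm})$ (the locally finite version of Proposition 4.6 of the kernels paper of Ballard et al.\ --- the one genuinely non-formal input, which you never identify), and then applies \cref{lem:a good formula fo fin mods} and the projection formula; no analogue of your steps (a) or (b) is ever needed. Your parenthetical remark that for finitely generated $M$ one can invoke \cref{lem:a good formula fo fin mods} is exactly right and matches the paper, but even along that bypass your chain still terminates in the unjustified step (b). To repair your route you must either prove (a) and (b) under explicit finiteness hypotheses on $M$, or reorder the argument as the paper does.
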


\begin{proof}
    \label{proof:fm-kernel of tensor products}
    We show that 
    \begin{align*}
        \pi_A(\RQ_A(N) \otimes^{\LD}_{\dgcatname{A}} \RQ_{A^{en}}(M^{\bgm})) \simeq \pi_A(N \otimes^{\LD}_{A} M).
    \end{align*}
    Firstly, because
    \[
        \Rtau_A(N) \otimes^{\LD}_{\dgcatname{A}} \RQ_{A^{en}}(M^{\bgm}) \simeq \Rtau_A(N) \otimes_{\dgcatname{A}}^{\LD} \RprimeQ_{A^{\op}}\RprimeQ_{A}(M^{\bgm}) 
    \]
    is acyclic from the locally finite version of \cite[Proposition 4.6]{ballard2021kernels}, we have 
    \begin{align*}
        \RQ_A(N) \otimes_{\dgcatname{A}}^{\LD} \RQ_{A^{en}}(M^{\bgm}) &\simeq N \otimes_{\dgcatname{A}}^{\LD} \RQ_{A^{en}}(M^{\bgm}) \\
        &\simeq N \otimes_{\dgcatname{A}}^{\LD} \RprimeQ_A(M^{\bgm}) \\
        &\simeq \RQ_A(N \otimes_{\dgcatname{A}}^{\LD} M^{\bgm}).
    \end{align*}
    Note that the second isomorphism is obtained by using \cref{lem:a good formula fo fin mods} and the third isomorphism is obtained by using \cref{lem:bimodule proj formula}.
    So, we show that
    \[
    \pi_A(N \otimes_{\dgcatname{A}}^{\LD} M^{\bgm}) \simeq \pi_A(N \otimes_{A}^{\LD} M).
    \]
    Moreover, since $\Gr(A)$ is equivalent to the category of right modules of the $\Zbb$-algebra $\dgcatname{A}$,
    it is enough to show that $N \otimes_{\dgcatname{A}} M^{\bgm}$ and $N \otimes_{A} M$ are isomorphic as modules of the $\Zbb$-algebra $\dgcatname{A}$ for any $N \in \Gr(A)$ and $M \in \Gr(A^{\en})$.
    We can easily check this from the definition of the tensor product of modules of a $\Zbb$-algebra. 
\end{proof}


  

The following proposition is a generalization of \cite[Corollary 5.21]{huybrechts2006fourier} in noncommutative projective geometry.

\begin{prop}
    \label{prop:D-eq and dimension}
    Let $A,B$ be noetherian locally finite $\Nbb$-graded $k$-algebras.
    We assume that $A,B$ have balanced dualizing complexes.
    We assume that $\qgr (A), \qgr (B)$ have canonical bimodules $\canmod_A, \canmod_B$, respectively.

    Then, 
    \[
    D^b(\qgr (A)) \simeq D^b(\qgr (B)) \Rightarrow \gldim(\qgr (A)) = \gldim(\qgr (B)).
    \]
\end{prop}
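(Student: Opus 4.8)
The plan is to exploit that an exact equivalence between the two bounded derived categories must commute with their Serre functors, together with the fact --- which is precisely what the existence of a canonical bimodule supplies --- that these Serre functors are t-exact up to a single cohomological shift. Write $n_A:=\gldim(\qgr(A))$ and $n_B:=\gldim(\qgr(B))$; by \cref{rmk:canonical bimodule} both are finite, so (as recalled after the diagram in \cref{sec:FM in ncproj}) $D^b(\qgr(A))\simeq\Perf(\QGr(A))$ and $D^b(\qgr(B))\simeq\Perf(\QGr(B))$, and both categories admit Serre functors $S_A,S_B$, with $S_A\simeq(-\otimes\canmod_A)[n_A]$ and $S_B\simeq(-\otimes\canmod_B)[n_B]$ by \cref{def:bimodules}. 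Since $-\otimes\canmod_A$ is an autoequivalence of the \emph{abelian} category $\qgr(A)$, it is exact, hence t-exact for the standard t-structure on $D^b(\qgr(A))$; likewise for $-\otimes\canmod_B$. Fixing an exact equivalence $\Phi\colon D^b(\qgr(A))\xrightarrow{\sim}D^b(\qgr(B))$ and using that a Serre functor is unique up to isomorphism, we get $\Phi\circ S_A^{k}\simeq S_B^{k}\circ\Phi$ for every $k\ge 0$.

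I would then record two properties. (i) Because $-\otimes\canmod_A$ is exact it commutes with the cohomology functors of the standard t-structure and reflects vanishing; hence if $X\in D^b(\qgr(A))$ is nonzero with cohomology concentrated in degrees $[a,b]$, then $S_A^{k}(X)$ --- which is the shift $[kn_A]$ of a $k$-fold $\canmod_A$-twist of $X$ --- has cohomology concentrated exactly in degrees $[a-kn_A,\,b-kn_A]$, and symmetrically for $S_B$ with $n_B$. (ii) $\Phi$ has \emph{bounded cohomological amplitude}: there is an integer $N$ such that $\Phi(\qgr(A))$ lies in the subcategory of objects with cohomology in degrees $[-N,N]$. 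To obtain this, regard $\Phi$ as an exact fully faithful functor $\Perf(\QGr(A))\to D(\QGr(B))$ that sends perfect complexes to perfect complexes (its essential image is $\Perf(\QGr(B))\simeq D^b(\qgr(B))$); when $n_A>0$, \cref{lem:applying re-main2} gives that $\R^{1}\torfunct{A}(A)$ is a finite $A$-module, so \cref{thm:re-main2} yields $\Phi\simeq\Phi_{\qgrobjname{E}}$ for a kernel $\qgrobjname{E}$, and such a Fourier--Mukai functor has bounded cohomological amplitude because it is assembled from $\Romega$ (of finite cohomological dimension by \cref{thm:balanced dualizing complex}\,(2)) and a derived tensor against the bounded kernel (cf.\ \cref{thm:dg morita}). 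If instead $n_A=0$ one argues symmetrically with $\Phi^{-1}$ and with $A,B$ interchanged, invoking \cref{lem:applying re-main2} for $B$ --- and if $n_B=0$ as well there is nothing to prove.

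With (i) and (ii) in hand the conclusion is a short asymptotic computation. Fix any nonzero $E\in\qgr(A)$ (if $\qgr(A)=0$ the statement is vacuous) and set $F:=\Phi(E)$, whose cohomology lies in some fixed interval $[c,d]$. For each $k\ge 0$ we have $S_B^{k}(F)\simeq\Phi(S_A^{k}(E))$. By (i), the cohomology of $S_B^{k}(F)$ is concentrated exactly in $[c-kn_B,\,d-kn_B]$. On the other hand $S_A^{k}(E)$ is the shift $[kn_A]$ of an object of $\qgr(A)$, so by (ii) the cohomology of $\Phi(S_A^{k}(E))$ is concentrated in $[-N-kn_A,\,N-kn_A]$. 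Since $S_B^{k}(F)\ne 0$, these two intervals must intersect for every $k\ge 0$; but their lengths are bounded (by $d-c$ and by $2N$), while the distance between their left endpoints equals $|c+N+k(n_A-n_B)|$, which tends to infinity unless $n_A=n_B$. Hence $n_A=n_B$, i.e.\ $\gldim(\qgr(A))=\gldim(\qgr(B))$.

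The main obstacle is step (ii): verifying the hypotheses under which $\Phi$ is of Fourier--Mukai type and knowing that Fourier--Mukai functors between noncommutative projective schemes with balanced dualizing complexes do not spread cohomology unboundedly. Once this boundedness is available, the rest of the proof is a formal bookkeeping with cohomological supports.
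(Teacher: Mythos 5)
Your overall strategy---commuting the equivalence with the Serre functors, using that $-\otimes\canmod_A$ and $-\otimes\canmod_B$ are exact on the hearts, and then letting $k\to\infty$ to detect the shift discrepancy $k(n_A-n_B)$---is a genuinely different route from the paper's, which works at the level of Fourier--Mukai kernels rather than objects. However, your step (ii), the uniform cohomological amplitude bound for $\Phi$ on the heart $\qgr(A)$, is the load-bearing step and it is not justified. The justification offered is that $\Phi\simeq\Phi_{\qgrobjname{E}}$ is ``assembled from $\Romega$ and a derived tensor against the bounded kernel.'' There are two problems. First, the kernel $\qgrobjname{E}$ produced by \cref{thm:re-main2} is an object of the unbounded derived category $D(\QBiGr(A^{\op}\otimes_k B))$, and nothing in its construction (it comes out of the abstract dg Morita equivalence of \cref{thm:dg morita}) tells you that it is cohomologically bounded. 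Second, and more seriously, even for a cohomologically bounded kernel $K$ the functor $-\otimes^{\LD}_{\Acal}K$ need not have finite cohomological amplitude: the bar resolution is unbounded to the left, and unless $K$ has finite Tor-amplitude over $\Acal$ the cohomologies $H^{-i}$ of the derived tensor product can be nonzero for arbitrarily large $i$. In the commutative analogue this is exactly the point where smoothness of $X\times Y$ (hence perfection of the kernel) is used, and no regularity of $A^{\op}\otimes_k B$ or Tor-finiteness of $\qgrobjname{E}$ is available or invoked here. (A one-sided bound---cohomology of $\Phi(M)$ uniformly bounded above for $M\in\qgr(A)$---would in fact suffice for your interval argument, but it runs into the same unestablished boundedness of the kernel.)

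The paper avoids quantifying over the whole heart: it realizes the Serre functors and the adjoints of $F$ as Fourier--Mukai functors (\cref{lem:fm-kernel of tensor products}), computes the kernel of $\serrefunct_{D^b(\qgr (A))}\circ F^{-1}\circ \serrefunct_{D^b(\qgr (B))}^{-1}$ explicitly via the composition formula (\cref{lem:kernel of compositions of fmt}) as an exact twist of the kernel of $F^{-1}$ shifted by $[n-m]$, and then invokes the uniqueness of kernels from (3) of \cref{thm:re-main2} to force $n=m$ on that single fixed object. To keep your object-level argument you would need to first prove that the kernel is cohomologically bounded and of finite Tor-amplitude (or establish the amplitude bound for $\Phi$ by other means); as written, the proof has a gap.
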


\begin{proof}
    \label{proof:D-eq and dimension}
    For simplicity, we set $n= \gldim(\qgr (A))$ and $m= \gldim(\qgr (B))$.
    Let $F:D^b(\qgr (A)) \rightarrow D^b(\qgr (B))$ be an equivalence.
    Then, from (3) of \cref{thm:re-main2}, we have $F \simeq \Phi_{\qgrobjname{F}}$ for some $\qgrobjname{F} \in D(\QGr(A^{\op} \otimes_k B))$ and $\qgrobjname{F}$ is unique up to quasi-isomorphism.
    Let $G:D^b(\qgr (B)) \rightarrow D^b(\qgr (A))$ be the quasi-inverse of $F$.
    Then, $G \simeq \Phi_{\qgrobjname{G}}$ for some $\qgrobjname{G} \in D(\QGr(B^{\op} \otimes_k A))$ and $\qgrobjname{G}$ is unique up to quasi-isomorphism.
    Note that when $n = 0$ or $m = 0$, (3) of \cref{thm:re-main2} cannot be applied to at least one of $F$ and $G$.
     However, because we have an equivalence between $D^b_{\dg}(\qgr (A)$) and $D^b_{\dg}(\qgr (B))$ from the uniqueness of dg enhancements and obtain an equivalence between $D_{\dg}(\QGr (A)) $ and $D_{\dg}(\QGr (B))$ from \eqref{eq:semi-free quasi-equivalence}, so we can also take  Fourier-Mukai functors as an equivalence $F: D^b(\qgr (A)) \rightarrow D^b(\qgr (B))$ and a quasi-inverse $G$ of $F$ (cf. \cite[Corollary 4.18]{ballard2021kernels}).
    Then, $G$ is a left adjoint of $F$.
    Moreover, $H:=\serrefunct_{D^b(\qgr (A))} \circ G \circ \serrefunct_{D^b(\qgr (B))}^{-1}$ is a right adjoint of $F$ (\cite[Remark 3.31]{huybrechts2006fourier}), where $\serrefunct_{D^b(\qgr (A))}$ (resp. $\serrefunct_{D^b(\qgr (B))}$) is the Serre functor of $D^b(\qgr (A))$ (resp. $D^b(\qgr (B))$).
    Thus, we have $G \simeq H$.
    In addtition, $\serrefunct_{D^b(\qgr (A))}$ and $\serrefunct_{D^b(\qgr (B))}$ are given by $- \otimes_A \pi_A\left(H^{-(n+1)}(R_A)\right)[n]$ and $- \otimes_B \pi_B\left(H^{-(m+1)}(R_B)\right)[m]$, respectively by our assumption (in detail, see \cref{rmk:D-eq and dimension}).
    So, from \cref{lem:fm-kernel of tensor products}, we have
    \begin{align*}
        \serrefunct_{D^b(\qgr (A))} &\simeq \Phi_{\pi_{A^{\en}}\left(H^{-(n+1)}(R_A^{\bgm})\right)}[n], \\
        \serrefunct_{D^b(\qgr (B))} &\simeq \Phi_{\pi_{B^{\en}}\left(H^{-(m+1)}(R_B^{\bgm})\right)}[m],
    \end{align*}
    where $R_A^{\bgm}$ and $R_B^{\bgm}$ are the associated objects of $R_A$ and $R_B$ in $D(\BiGr(A \otimes_k A^{\op}))$ and $D(\BiGr(B \otimes_k B^{\op}))$, respectively.
    From \cref{lem:kernel of compositions of fmt}, the kernel $\qgrobjname{E}''$ of $G \circ \serrefunct_{D^b(\qgr (B))}^{-1}$ is given by 
    \[
    \qgrobjname{E}'' \simeq \pi_{A \otimes_k B^{\op}}( \RQ_{B^{\op} \otimes_k B}(H^{-(m+1)}(R_B^{\bgm})^{\vee})\otimes_{\Bcal}^{\LD}\Romega_{A \otimes_k B^{\op}}(\qgrobjname{E}') )[-m],
    \]
    where $(-)^{\vee} = \Hom_{B}(-,B)$ (in detail, see \cref{rmk:D-eq and dimension}).
    By using \cref{lem:kernel of compositions of fmt} again, the kernel $\qgrobjname{E}'''$ of $H$ is given by
    \[
    \qgrobjname{E}''' \simeq \pi_{A \otimes_k B^{\op}}(\Romega_{A \otimes_k B^{\op}}(\qgrobjname{E}'') \otimes_{\Acal}^{\LD} \RQ_{A^{\op} \otimes_k A}(H^{-(n+1)}(R_A^{\bgm})))[n].
    \]
    For simplicity, we put $M_A = H^{-(n+1)}(R_A)$ and $M_B = H^{-(m+1)}(R_B)$.
    By using \cref{lem:basic properties}, \cref{lem:bimodule idempotent formula}, \cref{lem:bimodule proj formula} and \cref{lem:a good formula fo fin mods} repeatedly, we arrange $\qgrobjname{E}'''$ as follows:
    \begin{align*}
        \qgrobjname{E}''' 
        &\simeq \scalebox{0.8}{$
            \pi_{A \otimes_k B^{\op}}
                (\RQ_{A \otimes_k B^{\op}} (\RQ_{B^{\op} \otimes_k B}(M_B^{\bgm {\vee}}) \otimes_{\Bcal}^{\LD} 
                 \Romega_{A \otimes_k B^{\op}}(\qgrobjname{E}')) \otimes_{\Acal}^{\LD} \RQ_{A^{\op} \otimes_k A}(M_A^{\bgm}))
            [n-m]
         $}
          \\
        &\simeq \scalebox{0.8}{$
            \pi_{A \otimes_k B^{\op}}
                ((\RprimeQ_{B^{\op}} \circ \RQ_{B^{\op}\otimes_k B})(M_B^{\bgm {\vee}}) \otimes_{\Bcal}^{\LD}
                (\RprimeQ_{A} \circ \Romega_{B^{\op} \otimes_k A})(\qgrobjname{E}') 
        \otimes_{\Acal}^{\LD} \RQ_{A^{\op} \otimes_k A}(M_A^{\bgm}))
        [n-m]
        $}
        \\
        &\simeq \pi_{A \otimes_k B^{\op}}(\RQ_{B^{\op} \otimes_k B}(M_B^{\bgm {\vee}}) \otimes_{\Bcal}^{\LD} \Romega_{B^{\op} \otimes_k A}(\qgrobjname{E}') \otimes_{\Acal}^{\LD} \RQ_{A^{\op} \otimes_k A}(M_A^{\bgm}))[n-m] \\
        &\simeq \pi_{A \otimes_k B^{\op}}(\RprimeQ_{B^{\op}}(M_B^{\bgm \vee}) \otimes_{\Bcal}^{\LD} \Romega_{B^{\op} \otimes_k A}(\qgrobjname{E}') \otimes_{\Acal}^{\LD} \RprimeQ_{A}(M_A^{\bgm}))[n-m] \\
        &\simeq \pi_{A \otimes_k B^{\op}}(\RprimeQ_{B^{\op}}(M_B^{\bgm \vee} \otimes_{\Bcal}^{\LD} \Romega_{B^{\op} \otimes_k A}(\qgrobjname{E}') \otimes_{\Acal}^{\LD} \RprimeQ_{A}(M_A^{\bgm})))[n-m] \\
        &\simeq \pi_{A \otimes_k B^{\op}}((\RprimeQ_{B^{\op}} \circ \RprimeQ_{A})(M_B^{\bgm \vee} \otimes_{\Bcal}^{\LD} \Romega_{B^{\op} \otimes_k A}(\qgrobjname{E}') \otimes_{\Acal}^{\LD} \RprimeQ_{A}(M_A^{\bgm})))[n-m] \\
        &\simeq (\pi_{A \otimes_k B^{\op}} \circ \RQ_{A \otimes_k B^{\op}})(M_B^{\bgm \vee} \otimes_{\Bcal}^{\LD} \Romega_{B^{\op} \otimes_k A}(\qgrobjname{E}') \otimes_{\Acal}^{\LD} M_A^{\bgm})[n-m] \\
        &\simeq \pi_{A \otimes_k B^{\op}}(M_B^{\bgm \vee} \otimes_{\Bcal}^{\LD} \Romega_{B^{\op} \otimes_k A}(\qgrobjname{E}') \otimes_{\Acal}^{\LD} M_A^{\bgm})[n-m] \\
        &\simeq \pi_{A \otimes_k B^{\op}}(M_B^{\bgm \vee} \otimes_{\Bcal} \Romega_{B^{\op} \otimes_k A}(\qgrobjname{E}') \otimes_{\Acal} M_A^{\bgm})[n-m].
    \end{align*}
    From (3) of \cref{thm:re-main2} (see also \cref{rmk:re-main2}), we have an quasi-isomorphism
    \[
    \pi_{A \otimes_k B^{\op}}(\Romega_{B^{\op} \otimes_k A}(\qgrobjname{E}')) \simeq \pi_{A \otimes_k B^{\op}}(M_B^{\bgm \vee} \otimes_{\Bcal} \Romega_{B^{\op} \otimes_k A}(\qgrobjname{E}') \otimes_{\Acal} M_A^{\bgm})[n-m]
    \]
    in $D(\QGr(A \otimes_k B^{\op}))$.
    This induces $n=m$.
\end{proof}

\begin{rmk}
    \label{rmk:D-eq and dimension}
    In this remark, we give a detailed description about the Serre functor of $D^b(\qgr (A))$ when $A$ is a locally finite noetherian $\Nbb$-graded $k$-algebra with a balanced dualizing complex $R_A$ and $\qgr (A)$ has a canonical bimodule $\canmod_A$.

    Firstly, we recall that the Serre functor $\Scal_{D^b(\qgr (A))}$ of $D^b(\qgr (A))$ is given by 
    \[
     \Scal_{D^b(\qgr (A))}(\pi_A(M)) \simeq \pi_A(M \otimes^{\LD}_A R_A)[-1] \simeq \pi_A(M) \otimes \canmod_A[n],
    \]
    where $n$ is the global dimension of $\qgr (A)$.
    In particular, because we have $\pi_A(N \otimes^{\LD}_A R_A) \simeq \pi_A(N) \otimes \canmod_A[n+1] \in \qgr(A)[n+1]$ for all $N \in \gr(A)$, we have $\pi_A(R_A) \simeq \pi_A(M_A)[n+1] \in D^b(\qgr (A))$ and 
    \[
    \Scal_{D^b(\qgr (A))}(\pi_A(N)) \simeq \pi_A(N \otimes^{\LD}_A M_A)[n] \simeq \pi_A(N \otimes_A M_A)[n],
    \]
    where we put $M_A =H^{-(n+1)}(R_A)$ for simplicity.
    Note that from these isomorphisms, we obtain the well-definedness of $\pi_A(- \otimes_A M_A)[n]$ as a endofunctor of $D^b(\qgr (A))$.
    
    On the other hand, the inverse of the Serre functor $\Scal_{D^b(\qgr (A))}$ is given by
    \[
     \Scal_{D^b(\qgr (A))}^{-1}(\pi_A(N)) \simeq \pi_A(\RHom_A(R_A,\pi_{A}(N)))[1]
    \]
    from \cite[Proposition A.3]{de2004ideal}.
    Then, we have a natural morphism
    \[
     \pi_A(N \otimes_A^{\LD} \RHom_A(R_A,A)) \rightarrow \pi_A(\RHom_A(R_A,N)).
    \]
    This morphism is an isomorphism because $D^b(\qgr(A))$ is classically generated by $\{\Ocal_A(m)\}_{m \in \Zbb}$ (\cite[Lemma 4.2.2]{bondal2002generators}) and we can apply \cite[Propostion 5.3.22]{yekutieli2019derived}.
    So, we have 
    \begin{align*}
     \Scal_{D^b(\qgr (A))}^{-1}(\pi_A(N)) &\simeq  \pi_A(N \otimes_A^{\LD} \RHom_A(R_A,A))[1] \\
      &\simeq \pi_A(N \otimes_A^{\LD} \RHom_A(M_A, A))[1] \\
      &\simeq \pi_A(N \otimes^{\LD}_A M_A^{\vee})[1] \\
     &\simeq \pi_A(N \otimes_A M_A^{\vee})[1],
    \end{align*}
    where $M_A^{\vee} = \Hom_A(M_A,A)$.
    In addition, we have the following isomorphisms in $D(\QGr(A^{\en}))$:
    \begin{align*}
        \pi_{A^{\en}}(R_A \otimes_A^{\LD} \RHom_A(R_A,A)) &\simeq \pi_{A^{\en}}(M_A \otimes_A M_A^{\vee}), \\  
        \pi_{A^{\en}}(\RHom_A(R_A,R_A) \otimes_A^{\LD} R_A) &\simeq \pi_{A^{\en}}(M_A^{\vee} \otimes_A M_A),
    \end{align*}
    which are used in the last isomorphism of the arrangement of $\qgrobjname{E}'''$ in the proof of \cref{prop:D-eq and dimension}.
\end{rmk}

\begin{dfn}
    \label{def:canring}
    Let $A$ be a noetherian locally finite $\Nbb$-graded $k$-algebra.
    We assume that $\qgr (A)$ has a canonical bimodule $\canmod_A$.
    We define the \emph{canonical graded $k$-algebra} $\canring(\qgr (A))$ of $A$ by $\canring(\qgr (A)) = B(\qgr (A), \Ocal_A, - \otimes \canmod_A)$.
    We define the \emph{anti-canonical graded $k$-algebra} $\anticanring(\qgr (A))$, similarly.
\end{dfn}

The following proposition is a generalization of \cite[proposition 6.1]{huybrechts2006fourier} in the noncommutative setting.
\begin{prop}
    \label{prop:re-main1}
    Let $A,B$ be noetherian locally finite $\Nbb$-graded $k$-algebras.
    We assume that $A,B$ have balanced dualizing complexes.
    We assume that $\qgr (A), \qgr (B)$ have canonical bimodules $\canmod_A, \canmod_B$.
    Then,
        \[
         D^b(\qgr (A)) \simeq D^b(\qgr (B)) \ \Longrightarrow 
         \begin{array}{l}
            \canring(\qgr (A)) \simeq \canring(\qgr (B) ), \\
            \anticanring(\qgr (A)) \simeq \anticanring(\qgr (B))
         \end{array}
        \]
        as graded $k$-algebras.
\end{prop}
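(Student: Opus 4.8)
\emph{Overview and setup.} The plan is to reconstruct $\canring(\qgr(A))$ from the pair $(D^b(\qgr(A)),\text{Serre functor})$ in a Fourier--Mukai--compatible way, so that a derived equivalence transports it to $\canring(\qgr(B))$. So let $F\colon D^b(\qgr(A))\xrightarrow{\sim}D^b(\qgr(B))$ be an equivalence. By \cref{prop:D-eq and dimension} we have $n:=\gldim(\qgr(A))=\gldim(\qgr(B))$, and by the argument used in its proof we may take $F=\Phi_{\qgrobjname{F}}$ of Fourier--Mukai type with Fourier--Mukai quasi-inverse $G=\Phi_{\qgrobjname{G}}$, both viewed as functors between $D(\QGr(A))$ and $D(\QGr(B))$. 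An equivalence commutes canonically with Serre functors, and by \cref{rmk:D-eq and dimension} these are $(-\otimes\canmod_A)[n]$ and $(-\otimes\canmod_B)[n]$ with the \emph{same} shift $n$; hence there is a canonical isomorphism $F\circ(-\otimes\canmod_A)\simeq(-\otimes\canmod_B)\circ F$, and therefore $F\circ(-\otimes\canmod_A)^{m}\simeq(-\otimes\canmod_B)^{m}\circ F$ for all $m\in\Zbb$, compatibly with composition in $m$.

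\emph{The canonical algebra as a ring of Fourier--Mukai natural transformations.} By \cref{lem:fm-kernel of tensor products} and the Serre-functor description in \cref{rmk:D-eq and dimension}, both $\idfun_{D(\QGr(A))}$ and $-\otimes\canmod_A$ are of Fourier--Mukai type, the first with kernel $\pi_{A^{\en}}(A^{\bgm})$ and the second with kernel $\pi_{A^{\en}}(M_A^{\bgm})$, where $M_A=H^{-(n+1)}(R_A)$; by \cref{lem:kernel of compositions of fmt} each $(-\otimes\canmod_A)^{m}$ ($m\ge 0$) is again of Fourier--Mukai type. Since \cref{thm:dg morita} identifies kernels with direct-sum-preserving quasi-functors at the dg level, the space $\operatorname{Nat}\bigl(\idfun_{D(\QGr(A))},(-\otimes\canmod_A)^{m}\bigr)$ of natural transformations is computed as morphisms between the corresponding kernels in $D\bigl(\QBiGr(A^{\mathrm{op}}\otimes_{k}A)\bigr)$. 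I would then identify this space with $\Hom_{\qgr(A)}\bigl(\Ocal_A,\Ocal_A\otimes\canmod_A^{m}\bigr)$ by evaluation at $\Ocal_A$ --- the noncommutative analogue of the identity $\Hom(\mathcal{O}_{\Delta},\Delta_{*}\mathcal{L})\cong H^{0}(\mathcal{L})$ --- using that the identity kernel $\pi_{A^{\en}}(A^{\bgm})$ restricts along the diagonal to $\Ocal_A$ and that, because $\gldim(\qgr(A))<\infty$ and $\canmod_A^{m}$ is an object of the abelian category $\qgr(A)$, the potential higher contributions vanish. Summing over $m\ge 0$ and matching the products --- the one on $\bigoplus_m\operatorname{Nat}(\idfun,(-\otimes\canmod_A)^{m})$ being $(\alpha,\beta)\mapsto\bigl((-\otimes\canmod_A)^{a}\beta\bigr)\circ\alpha$, which is exactly the product of $B(\qgr(A),\Ocal_A,-\otimes\canmod_A)$ from \cref{def:canring} --- this yields an isomorphism of graded $k$-algebras
\[
\canring(\qgr(A))\;\cong\;\bigoplus_{m\ge 0}\operatorname{Nat}\bigl(\idfun_{D(\QGr(A))},(-\otimes\canmod_A)^{m}\bigr),
\]
and the same statement for $B$.

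\emph{Transport, and the anti-canonical case.} Conjugation by $F$ (whiskering, $\eta\mapsto F\ast\eta\ast F^{-1}$) gives an isomorphism $\operatorname{Nat}\bigl(\idfun_{D(\QGr(A))},(-\otimes\canmod_A)^{m}\bigr)\cong\operatorname{Nat}\bigl(\idfun_{D(\QGr(B))},(-\otimes\canmod_B)^{m}\bigr)$ for each $m$, and the compatibility in the first step of the isomorphisms $F(-\otimes\canmod_A)^{m}F^{-1}\simeq(-\otimes\canmod_B)^{m}$ with composition makes the sum over $m$ an isomorphism of graded rings. Composing with the isomorphisms of the second step for $A$ and $B$ gives $\canring(\qgr(A))\cong\canring(\qgr(B))$ as graded $k$-algebras. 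Replacing $-\otimes\canmod$ throughout by the quasi-inverse bimodule $\Hom(\canmod,-)$ --- which is likewise of Fourier--Mukai type, cf. the treatment of $M_A^{\vee}$ in \cref{rmk:D-eq and dimension} --- gives the corresponding isomorphism $\anticanring(\qgr(A))\cong\anticanring(\qgr(B))$.

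\emph{Main obstacle.} The first and third steps are essentially formal. The crux is the second step: realizing $\canring(\qgr(A))$ as the ring of Fourier--Mukai natural transformations $\bigoplus_m\operatorname{Nat}(\idfun,(-\otimes\canmod_A)^{m})$ \emph{together with its ring structure}. This rests on the dg-level Morita equivalence \cref{thm:dg morita}, the kernel formulas \cref{lem:fm-kernel of tensor products} and \cref{lem:kernel of compositions of fmt}, and the vanishing (using $\gldim(\qgr(A))<\infty$, cf. \cref{lem:applying re-main2}) that collapses the Hom-space of kernels onto $\Hom_{\qgr(A)}(\Ocal_A,\Ocal_A\otimes\canmod_A^{m})$; keeping track of multiplicativity through the dg enhancement is the delicate point.
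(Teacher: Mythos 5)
Your overall strategy --- reconstruct $\canring(\qgr(A))$ from Hom--spaces between the identity and powers of the Serre twist, then transport along the equivalence --- is the same as the paper's, but the step you yourself flag as the crux contains a genuine gap, and it is exactly the gap that the paper's Steps 1--2 exist to close. You propose to realize $\canring(\qgr(A))$ as $\bigoplus_m\operatorname{Nat}\bigl(\idfun,(-\otimes\canmod_A)^m\bigr)$ and assert that \cref{thm:dg morita} lets you compute these natural transformations as morphisms between kernels in $D(\QBiGr(A^{\op}\otimes_k A))$. That is not what \cref{thm:dg morita} gives: it identifies the \emph{dg category of kernels} with a dg category of quasi-functors, and the resulting map from $\Hom$ of kernels to natural transformations of the induced triangulated functors is in general neither injective nor surjective. (This failure is precisely why the entire Fourier--Mukai--kernel formalism is used instead of triangulated natural transformations, both here and in \cite{huybrechts2006fourier}.) Likewise, your identification of $\operatorname{Nat}(\idfun,(-\otimes\canmod_A)^m)$ with $\Hom_{\qgr(A)}(\Ocal_A,\Ocal_A\otimes\canmod_A^m)$ ``by evaluation at $\Ocal_A$'' is unjustified: evaluation of a natural transformation at a single object is neither injective nor surjective in general, and the finiteness of $\gldim(\qgr(A))$ does not repair this.

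If you instead define your graded ring honestly via $\Hom$ of kernels (which is what the paper does, computing $\Hom_{D(\QBiGr(A\otimes_kA^{\op}))}(\pi(A^{\bgm}),\pi(M_A^{\otimes l\,\bgm}))\simeq \Q_A(M_A^{\otimes l})_0$ by an explicit $\RQ$-calculation in its Step 3), then your transport step breaks: ``conjugation by $F$'' is an operation on functors, not on kernels, and to move $\Hom$-spaces of kernels from $A$ to $B$ you need an honest equivalence $D(\QBiGr(A\otimes_kA^{\op}))\to D(\QBiGr(B\otimes_kB^{\op}))$ sending $\pi(A^{\bgm})$ to $\pi(B^{\bgm})$ and $\pi(M_A^{\otimes l\,\bgm})$ to $\pi(M_B^{\otimes l\,\bgm})$. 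Constructing that equivalence is the real work: the paper first shows the kernels $\qgrobjname{F},\qgrobjname{G}$ also induce equivalences between $D^b(\qgr(B^{\op}))$ and $D^b(\qgr(A^{\op}))$ (Step 1, via \cref{sublem:tensor product of dg bimodules}), then builds $\tilde\Phi_{\qgrobjname{H}}$ from the external tensor product $\Romega(\qgrobjname{F})\otimes_k\Romega(\qgrobjname{G})$ and matches up the powers of $M_A$ and $M_B$ using \cref{lem:kernel of compositions of fmt}, \cref{sublem:tensor product of dg bimodules 2}, the uniqueness of kernels, and \cref{prop:D-eq and dimension} (Step 2). None of this is formal, and none of it is supplied by your conjugation argument. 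So while your outline identifies the right objects, the proof as written would not go through without essentially redoing the paper's Steps 1--3.
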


\begin{proof}
    \label{proof:prop-main1}
    We only show the isomorphism of canonical graded $k$-algebras.

    \noindent
    \emph{Step 1.}
    Let $F:D^b(\qgr (A)) \rightarrow D^b(\qgr (B))$ be an equivalence.
    Then, from (3) of \cref{thm:re-main2}, we have $F \simeq \Phi_{\qgrobjname{F}}$ for some $\qgrobjname{F} \in D(\QGr(A^{\op} \otimes_k B))$ and $\qgrobjname{F}$ is unique up to quasi-isomorphism.
    Let $G:D^b(\qgr (B)) \rightarrow D^b(\qgr (A))$ be the quasi-inverse of $F$.
    Then, $G \simeq \Phi_{\qgrobjname{G}}$ for some $\qgrobjname{G} \in D(\QGr(B^{\op} \otimes_k A))$ and $\qgrobjname{G}$ is unique up to quasi-isomorphism.
    Note that when $n = 0$ or $m = 0$, (3) of \cref{thm:re-main2} cannot be applied to at least one of $F$ and $G$.
    However, because we have an equivalence between $D^b_{\dg}(\qgr (A)$) and $D^b_{\dg}(\qgr (B))$ from the uniqueness of dg enhancements and obtain an equivalence between $D_{\dg}(\QGr (A)) $ and $D_{\dg}(\QGr (B))$ from \eqref{eq:semi-free quasi-equivalence}, so we can also take  Fourier-Mukai functors as an equivalence $F: D^b(\qgr (A)) \rightarrow D^b(\qgr (B))$ and a quasi-inverse $G$ of $F$ (cf.  \cite[Corollary 4.18]{ballard2021kernels}).

    Because $A^{\op} \otimes_k B \simeq A^{\op} \otimes_k (B^{\op})^{\op}$ and $B^{\op} \otimes_k A \simeq B^{\op} \otimes_k (A^{\op})^{\op}$, $\qgrobjname{F}, \qgrobjname{G}$ determine the functors 
    \begin{align*}
        \Phi'_{\qgrobjname{F}}:D^b(\qgr (B^{\op})) \rightarrow D^b(\qgr (A^{\op})),\\
         \Phi'_{\qgrobjname{G}}:D^b(\qgr (A^{\op})) \rightarrow D^b(\qgr (B^{\op})).
    \end{align*}
    We show that $\Phi'_{\qgrobjname{F}}$ and $\Phi'_{\qgrobjname{G}}$ are equivalences.
    We consider the kernel $\qgrobjname{E}$ of the functor $\Phi'_{\qgrobjname{F}} \circ \Phi'_{\qgrobjname{G}}$.
    By using \cref{lem:kernel of compositions of fmt}, we have
    \begin{align*}
        \qgrobjname{E} = \pi_{A^{\op} \otimes_k A}(\Romega_{B^{\op} \otimes_k (A^{\op})^{\op}}(\qgrobjname{G}) \otimes_{\Bcal^{\op}}^{\LD} \Romega_{A^{\op} \otimes_k (B^{\op})^{\op}}(\qgrobjname{F})) \in D(\QGr(A^{\op} \otimes_k A)).
    \end{align*}
    So, to prove that $\Phi'_{\qgrobjname{F}} \circ \Phi'_{\qgrobjname{G}}$ is the identity functor, it is enough to show that
    \begin{equation}
        \label{eq: claim of step 1}
    \qgrobjname{E} \simeq \pi_{A^{\op} \otimes_k A}(\Romega_{A^{\op} \otimes_k B}(\qgrobjname{F}) \otimes_{\Bcal}^{\LD} \Romega_{B^{\op} \otimes_k A}(\qgrobjname{G})) \tag{$\ast$}.
    \end{equation}
    because the right hand side of this isomorphism is quasi-isomorphic to $\pi_{A^{\op} \otimes_k A}(A^{\bgm})$ by the uniqueness of Fourier-Mukai kernels.
    The isomorphism $\eqref{eq: claim of step 1}$ follows from the following sub-lemma.
    \begin{sublem}
    \label{sublem:tensor product of dg bimodules}
    Let $\Acal, \Bcal$ and $\Ccal$ be dg categories.
    Let $M$ be a dg $\Acal$-$\Bcal$-bimodule and $N$ be a dg $\Bcal$-$\Ccal$-bimodule.
    Then, we have a natural isomorphism of dg $\Acal$-$\Ccal$-bimodules
    \begin{align*}
        M \otimes_{\Bcal} N \simeq N \otimes_{\Bcal^{\op}} M.
    \end{align*}
    \end{sublem}
    \begin{proof}[Proof of \cref{sublem:tensor product of dg bimodules}]
        \label{proof:sublem-tensor product of dg bimodules}
        Note that for any $a \in \Acal, c \in \Ccal$, $(M \otimes_{\Bcal} N)(a,c)$ is given by 
        \[
        (M \otimes_{\Bcal} N)(a,c) = \int^{b \in \Bcal} M(a,b) \otimes_k  N(b,c).
        \] 
        So, the claim is proved by the following coend calculation:
        \begin{align*}
            (M \otimes_{\Bcal} N)(a,c) 
            &= \int^{b \in \Bcal} M(a,b) \otimes_k  N(b,c) \\
            &\simeq \int^{b \in \Bcal^{\op}} N(b,c) \otimes_k  M(a,b) 
            = (N \otimes_{\Bcal^{\op}} M)(a,c).
        \end{align*}
    \end{proof}
    In the same way, we can show that $\Phi'_{\qgrobjname{G}} \circ \Phi'_{\qgrobjname{F}}$ is isomorphic to the identity functor.
    Thus, $\Phi'_{\qgrobjname{F}}$ and $\Phi'_{\qgrobjname{G}}$ are equivalences.

    \noindent
    \emph{Step 2.}
    Let $\qgrobjname{H}:= \Romega_{A^{\op} \otimes_k B}(\qgrobjname{F}) \otimes_k \Romega_{(A^{\op})^{\op} \otimes_k B^{\op}}(\qgrobjname{G})$.
    Then, $\qgrobjname{H}$ defines the functor 
    \begin{align*}
        \tilde{\Phi}_{\qgrobjname{H}}:D(\QBiGr (A \otimes_k A^{\op})) \rightarrow D(\QBiGr(B \otimes_k B^{\op})), \\
        \tilde{\Phi}_{\qgrobjname{H}}(-) = \pi_{B \otimes_k B^{\op}}(\Romega_{A \otimes_k A^{\op}}(-) \otimes_{\dgcatname{A \otimes A^{\op}}}^{\LD} \qgrobjname{H}).
    \end{align*}
        This functor is an equivalence since $\Phi_{\qgrobjname{F}}$ and $\Phi'_{\qgrobjname{G}}$ are equivalences (in detail, see \cref{rmk:prop-main1}).


        As in the proof of \cref{prop:D-eq and dimension}, we put $M_A = H^{-(n+1)}(R_A)$ and $M_B = H^{-(m+1)}(R_B)$, where $n,m$ are the global dimensions of $\qgr (A), \qgr (B)$.
        We also put $\qgrobjname{M}_l= \tilde{\Phi}_{\qgrobjname{H}}(\pi_{A \otimes_k A^{\op}}(M_A^{\otimes l \bgm}))$.
        Then, we show that 
        \[
        \Phi_{\qgrobjname{M}_l}:D(\QGr (B)) \rightarrow D(\QGr (B))
        \]
        induces the functor
        \[
        \Phi_{\qgrobjname{M}_l}:D^b(\qgr (B)) \rightarrow D^b(\qgr (B)).
        \]
        and $\Phi_{\qgrobjname{M}_l}$ is an equivalence.
        To see this, we prove that $\qgrobjname{M}_l$ is quasi-isomorphic to the kernel of the composition of the following functors:
    \[
    \begin{tikzcd}
         D^b(\qgr (B)) \arrow[r, "\Phi_{\qgrobjname{G}}"] & D^b(\qgr (A)) \arrow[r, "\Phi_{\pi_{A \otimes_k A^{\op}}\left(M_A^{\otimes l \bgm}\right)}"]&[6em] D^b(\qgr (A)) \arrow[r, "\Phi_{\qgrobjname{F}}"] & D^b(\qgr (B)).
    \end{tikzcd}
    \]
    Here, we need the following sub-lemma. 
    \begin{sublem}
        \label{sublem:tensor product of dg bimodules 2}
        Let $\Acal, \Bcal$ be dg categories.
        Let $\dgmodname{L}$ be a dg $\Acal$-$\Acal$-bimodule, $\dgmodname{M}$ be a dg $\Acal$-$\Bcal$-bimodule and $\dgmodname{N}$ be a dg $\Acal^{\op}$-$\Bcal^{\op}$-bimodule.
        Then, we have a natural isomorphism of dg $\Bcal$-$\Bcal$-bimodules
        \begin{align*}
            \dgmodname{L} \otimes_{\Acal^{\en}} (\dgmodname{M} \otimes_k \dgmodname{N}) \simeq \dgmodname{N} \otimes_{\Acal}  (\dgmodname{L} \otimes_{\Acal} \dgmodname{M}).
        \end{align*}
    \end{sublem}
    \begin{proof}[Proof of \cref{sublem:tensor product of dg bimodules 2}]
        This is proved by the following coend calculation:
    \begin{align*}
       ( \dgmodname{L} \otimes_{\Acal^{\en}} (\dgmodname{M} \otimes_k \dgmodname{N}))(b,b') 
       &= \int^{(a,a') \in \dgcatname{A \otimes A^{\op}}} \dgmodname{L}(a',a) \otimes_k (\dgmodname{M}(a,b) \otimes_k \dgmodname{N}(a',b')) \\
       &\simeq \int^{(a,a') \in \dgcatname{A \otimes A}} \dgmodname{N}(a',b') \otimes_k (\dgmodname{L}(a',a) \otimes_k \dgmodname{M}(a,b)) \\
       &\simeq \int^{a' \in \Acal} \int^{a \in \Acal} \dgmodname{N}(a',b') \otimes_k (\dgmodname{L}(a',a) \otimes_k \dgmodname{M}(a,b)) \\
       &\simeq \int^{a' \in \Acal} \dgmodname{N}(a',b') \otimes_k \int^{a \in \Acal} \dgmodname{L}(a',a) \otimes_k \dgmodname{M}(a,b) \\
       &\simeq \int^{a' \in \Acal} \dgmodname{N}(a',b') \otimes_k (L \otimes_{\Acal} M)(a',b) \\
       &= (\dgmodname{N} \otimes_{\Acal}  (\dgmodname{L} \otimes_{\Acal} \dgmodname{M}))(b,b').
    \end{align*}
     At the third equality, we use Fubini theorem (\cite[Proposition 3.12]{genovese2017adjunctions}, \cite[Proposition 2.17]{imamura2024formal}).
     At the fourth equality, we use \cite[Proposition 2.15]{imamura2024formal}.
    \end{proof}
    We see that the kernel of $\Phi_{\qgrobjname{F}} \circ \Phi_{\pi_{A \otimes_k A^{\op}}\left(M_A^{\otimes l \bgm}\right)} \circ \Phi_{\qgrobjname{G}}$ is isomorphic to $\qgrobjname{M}_l$.
    Actually, from \cref{lem:bimodule idempotent formula}, \cref{lem:bimodule proj formula}, \cref{lem:kernel of compositions of fmt} and \cref{sublem:tensor product of dg bimodules 2}, we have 
    \begin{align*}
        &\left(\text{the kernel of} \ \Phi_{\qgrobjname{F}} \circ \Phi_{\pi_{A \otimes_k A^{\op}} \left(M_A^{\otimes l \bgm}\right)} \circ \Phi_{\qgrobjname{G}}\right)\\
        &\simeq \pi_{B^{\en}}(\RQ_{B^{\op} \otimes_k A}(\Romega_{B^{\op} \otimes_k A}(\qgrobjname{G}) \otimes_{\Acal}^{\LD} \RQ_{A^{\en}}(M_A^{\otimes l \bgm})) \otimes_{\Acal}^{\LD} \Romega_{A^{\op} \otimes_k B}(\qgrobjname{F})) \\
        &\simeq \pi_{B^{\en}}(\Romega_{B^{\op} \otimes_k A}(\qgrobjname{G}) \otimes_{\Acal}^{\LD} \RQ_{A^{\en}}(M_A^{\otimes l \bgm}) \otimes_{\Acal}^{\LD} \Romega_{A^{\op} \otimes_k B}(\qgrobjname{F})) \\
        &\simeq\qgrobjname{M}_{l}.
    \end{align*}
   Note that $\Phi_{\pi_{A \otimes_k A^{\op}}\left(M_A^{\otimes l \bgm}\right)} \simeq \Scal_A^l[-ln]$, where $\Scal_A$ is the Serre functor of $D^b(\qgr (A))$.
   So, since Serre functors commute with equivalences, we have $\Phi_{\qgrobjname{M}_l} \simeq \Scal_B[-ln]$.
   Moreover, by \cref{prop:D-eq and dimension} and the uniqueness of Fourier-Mukai kernels ((3) of \cref{thm:re-main2}), we have $\qgrobjname{M}_l \simeq \pi_{B \otimes_k B^{\op}}(M_B^{\otimes l \bgm})$.
   Thus, for all $l \in \Zbb$, we have $\tilde{\Phi}_{\qgrobjname{H}}(\pi_{A \otimes_k A^{\op}}(M_A^{\otimes l \bgm})) \simeq \pi_{B \otimes_k B^{\op}}(M_B^{\otimes l \bgm})$.

   \noindent
   \emph{Step 3.}
   Since $\tilde{\Phi}_{\qgrobjname{H}}$ is an equivalence, we have 
   \begin{align*}
    &\Hom_{D(\QBiGr (A \otimes_k A^{\op}))}(\pi_{A \otimes_k A^{\op}}(M_A^{\otimes l_1 \bgm}), \pi_{A \otimes_k A^{\op}}(M_A^{\otimes l_2 \bgm})) \\
    &\simeq \Hom_{D(\QBiGr (B \otimes_k B^{\op}))}(\pi_{B \otimes_k B^{\op}}(M_B^{\otimes l_1 \bgm}), \pi_{B \otimes_k B^{\op}}(M_B^{\otimes l_2 \bgm}))
   \end{align*}
    for all $l_1, l_2 \in \Zbb$.
   In particular, we have
   \begin{align*}
    &H^0(\qgr (A), \pi_A(M_A^{\otimes l}))\\
    &\simeq \Hom_{D(\QBiGr (A \otimes_k A^{\op}))}(\pi_{A \otimes_k A^{\op}}(A^{\bgm}), \pi_{A \otimes_k A^{\op}}(M_A^{\otimes l \bgm})) \\
    &\simeq\Hom_{D(\QBiGr (B \otimes_k B^{\op}))}(\pi_{B \otimes_k B^{\op}}(B^{\bgm}), \pi_{B \otimes_k B^{\op}}(M_B^{\otimes l \bgm})) \\
    &\simeq H^0(\qgr (B), \pi_B(M_B^{\otimes l}))
    \end{align*} 
    for all $l \in \Zbb$.
    Here, the first isomorphism
    is obtained as follows: we have
    \begin{align*}
        H^0(\qgr (A), \pi_A(M_A^{\otimes l})) 
        &= \Hom_{\qgr (A)}(\pi_A(A), \pi_A(M_A^{\otimes l})) \\
        &\simeq \Hom_{D(\Gr A)}(A, \RQ_A(M_A^{\otimes l})) \\
        &\simeq H^0(\RHom_{\Gr A}(A, \RQ_A(M_A^{\otimes l}))) \\
        &\simeq \Hom_{\Gr A}(A, \RQ_A(M_A^{\otimes l})) \\
        &\simeq H^0(\RQ_A(M_A^{\otimes l}))_0 \\
        &\simeq \Q_A(M_A^{\otimes l})_0
    \end{align*}
    and
    \begin{align*}
        &\Hom_{D(\QBiGr (A \otimes_k A^{\op}))}(\pi_{A \otimes_k A^{\op}}(A^{\bgm}), \pi_{A \otimes_k A^{\op}}(M_A^{\otimes l \bgm})) \\
        &\simeq \Hom_{D(\QBiGr(A \otimes_k A^{\op}))}(\pi_{A \otimes_k A^{\op}}((A^{\bgm})_{\geq 0, \geq0}), \pi_{A \otimes_k A^{\op}}(M_A^{\otimes l \bgm})) \\
        &\simeq \Hom_{D(\BiGr(A \otimes_k A^{\op}))}((A^{\bgm})_{\geq 0, \geq0}, \RQ_{A \otimes_k A^{\op}}(M_A^{\otimes l \bgm})) \\
        &\simeq H^0(\R\Hom_{\BiGr(A \otimes_k A^{\op})}((A^{\bgm})_{\geq 0, \geq0}, \RQ_{A \otimes_k A^{\op}}(M_A^{\otimes l \bgm}))) \\
        &\simeq \Hom_{\BiGr(A \otimes_k A^{\op})}((A^{\bgm})_{\geq 0, \geq0}, \RQ_{A \otimes_k A^{\op}}(M_A^{\otimes l \bgm})) \\
        &\simeq H^0(\RQ_{A \otimes_k A^{\op}}(M_A^{\otimes l \bgm}))_{0,0}\\
        &\simeq H^0(\RprimeQ_{A}(M_A^{\otimes l \bgm}))_{0,0} \\
        &\simeq \Q_A(M_A^{\otimes l})_0.
    \end{align*}
    Note that in the above calculation, we use \cref{lem:a good formula fo fin mods}, the fact that $(A^{\bgm})_{\geq 0, \geq 0}$ is a projective object in $\BiGr (A \otimes_k A^{\op})$ and an isomorphism $\pi_{A \otimes_k A^{\op}}(A^{\bgm}) \simeq \pi_{A \otimes_k A^{\op}}((A^{\bgm})_{\geq 0, \geq 0}) \in \QBiGr(A \otimes_k A^{\op})$, which is obtained from the definition of $\QBiGr(A \otimes_k A^{\op})$.

    Thus, we obtain a graded $k$-algebra isomorphism $\canring(\qgr (A)) \simeq \canring(\qgr (B))$.

\end{proof}

\begin{rmk}
    \label{rmk:prop-main1}
    We explain the reason why $\tilde{\Phi}_{\qgrobjname{H}}$ is an equivalence in this remark.
    Let $\Hcal' := \Romega_{A^{\op} \otimes_k (B^{\op})^{\op}}(\qgrobjname{G}) \otimes_k \Romega_{B^{\op} \otimes_k A}(\qgrobjname{F})$.
    We define the functor
    \begin{align*}
        \widetilde{\Phi}_{\Hcal'}:D(\QBiGr (B \otimes_k B^{\op})) \rightarrow D(\QBiGr(A \otimes_k A^{\op})), \\
        \widetilde{\Phi}_{\Hcal'}(-) = \pi_{A \otimes_k A^{\op}}(\Romega_{B \otimes_k B^{\op}}(-) \otimes_{\dgcatname{B \otimes B^{\op}}}^{\LD} \Hcal').
    \end{align*}
    Then, we calculate $\widetilde{\Phi}_{\Hcal'} \circ \widetilde{\Phi}_{\Hcal}$ 
    \begin{align*}
       \widetilde{\Phi}_{\Hcal'} \circ \widetilde{\Phi}_{\Hcal}(-) &= \pi_{A^{\en}}(\Romega_{B^{\en}} (\pi_{B^{\en}}( \Romega_{A^{\en}}(-) \otimes_{\Acal^{\en}}^{\LD} \Hcal)) \otimes_{\Bcal^{\en}}^{\LD} \Hcal') \\
        &= \pi_{A^{\en}}(\RQ_{B^{\en}}(\Romega_{A^{\en}}(-) \otimes_{\Acal^{\en}}^{\LD} \Hcal) \otimes_{\Bcal^{\en}}^{\LD} \Hcal') \\
        &= \pi_{A^{\en}}(\Romega_{A^{\en}}(-) \otimes_{\Acal^{\en}}^{\LD} \RprimeQ_{B^{\en}}(\Hcal) \otimes_{\Bcal^{\en}}^{\LD} \Hcal') \\
        &= \pi_{A^{\en}}(\Romega_{A^{\en}}(-) \otimes_{\Acal^{\en}}^{\LD} \Hcal \otimes_{\Bcal^{\en}}^{\LD} \Hcal') \\
        &= \pi_{A^{\en}}(\Romega_{A^{\en}}(-) \otimes_{\Acal^{\en}}^{\LD} \RQ_{A^{\en}}(A^{\bgm} \otimes_k A^{\bgm}) ) \\
        &= \pi_{A^{\en}}(\Romega_{A^{\en}}(-)) \\
        &= \idfun_{D(\QBiGr(A \otimes_k A^{\op}))}(-).
    \end{align*}
    In the same way, we can show that $\widetilde{\Phi}_{\Hcal} \circ \widetilde{\Phi}_{\Hcal'}$ is isomorphic to the identity functor.
    In the above calculation, we use the following formulas about functors on the derived categories of $\Zbb^4$-graded $A^{\en} \otimes_k B^{\en}$-modules, where we regard $A^{\en} \otimes_k B^{\en}$ as an $\Nbb^4$-graded algebra by the natural $\Nbb^4$-grading:
    \begin{enumerate}[label=(\arabic*)]
        \item $\RQ_{A^{\en} \otimes_k B^{en}} \simeq \RprimeQ_{A^{\en}} \circ \RprimeQ_{B^{\en}} \simeq \RprimeQ_{B^{\en}} \circ \RprimeQ_{A^{\en}}$, where functors $\RprimeQ_{A^{\en}}$, $\RprimeQ_{B^{\en}}$ and $\RQ_{A^{\en} \otimes_k B^{\en}}$ are functors from $D(\BiGr(A^{\en} \otimes_k B^{\en}))$ to $D(\BiGr(A^{\en} \otimes_k B^{en}))$.
        They are defined in the same way as in $\RprimeQ_{A}, \RprimeQ_{B}$ and $\RQ_{A \otimes_k B}$.
        We also have similar functors such as $\RprimeQ_{A \otimes_k B^{\op}}$ and $\RprimeQ_{A^{\op} \otimes_k B}$, which are endofunctors of $D(\BiGr(A \otimes_k B^{\op}))$ and $D(\BiGr(A^{\op} \otimes_k B))$, respectively.
        \item $\RQ_{A^{\en} \otimes_k B^{\en}} (M \otimes_k N) \simeq \RQ_{A^{\en}}(M) \otimes_k \RQ_{B^{\en}}(N)$ for $M \in D(\BiGr(A^{\en}))$ and $N \in D(\BiGr(B^{\en}))$.
        We also have similar formulas such as $\RprimeQ_{A^{\en} \otimes_k B^{\en}} (M \otimes_k N) \simeq \RprimeQ_{A \otimes_k B^{\op}} (M) \otimes_k \RprimeQ_{A^{\op} \otimes_k B} (N)$ for $M \in D(\BiGr(A \otimes_k B^{\op}))$ and $N \in D(\BiGr(A^{\op} \otimes_k B))$.
        \item The projection formula: $\RQ_{B^{en}}(N \otimes_{\Acal^{\en}}^{\LD} M) \simeq N \otimes_{\Acal^{\en}}^{\LD} \RprimeQ_{B^{en}}(M)$ for $M \in D(\BiGr(A^{\en} \otimes_k B^{\en}))$ and $N \in D(\BiGr(A^{\en}))$.
    \end{enumerate}
   (1) is proved in the same way as \cite[Proposition 3.41]{ballard2021kernels} (see also \cref{lem:basic properties}).
   (2) is proved by using (1) and a direct calculation.
   (3) is proved in the same way as \cref{lem:bimodule proj formula}. 
   The main reason why the similar proofs work is that $A^{\en}, B^{\en}$ and $A^{\en} \otimes_k B^{\en}$ are finitely generated as $k$-algebras and $\RQ_{A^{\en}}, \RQ_{B^{\en}}$ commute with arbitrary direct sums because $\RQ_{A^{\en}} = \RprimeQ_A \circ \RprimeQ_{A^{\op}}, \RQ_{B^{\en}} = \RprimeQ_B \circ \RprimeQ_{B^{\op}}$ and $\RQ_A, \RQ_{A^{\op}}, \RQ_B, \RQ_{B^{\op}}$ commute with arbitrary direct sums.
\end{rmk}

To prove the main theorem, we need to recall the definition of AZ-(anti-)ampleness.

\begin{dfn}[{\cite[page 250]{artin1994noncommutative}}]
    \label{def:AZ-ample}
    Let $(\Ocal, s)$ be an algebraic pair for a $k$-linear category $\abcatname{C}$.
    Then, $(\Ocal, s)$ is \emph{AZ (Artin-Zhang)-ample} if 
    \begin{enumerate}[label=(\alph*)]
        \item for every $\abobjname{M} \in \abcatname{C}$, there are $l_1, \cdots, l_p \in \Nbb$ and an epimorphism $\bigoplus_{i=1}^p s^{-l_p}(\Ocal) \rightarrow \abobjname{M}$ in $\abcatname{C}$,
        \item for every epimorphism $f:\abobjname{M} \rightarrow \Ncal$ in $\abcatname{C}$, there exists $n_0$ such that the natural map $\Hom_\abcatname{C}(s^{-n}(\Ocal), \abobjname{M}) \rightarrow \Hom_\abcatname{C}(s^{-n}(\Ocal), \Ncal) $ for all $n \geq  n_0$ is surjective.
    \end{enumerate}
    In addition, we call $(\Ocal, s)$ \emph{AZ-anti-ample} if $(\Ocal, s^{-1})$ is AZ-ample.
    \end{dfn}

    The following theorem shows the importance of AZ-(anti-)ampleness in noncommutative projective geometry.
    \begin{thm}[{\cite[Theorem 4.5]{artin1994noncommutative}}]
    \label{thm:AZ-characterization}
     Let $(\Ocal, s)$ be an algebraic pair for a $k$-linear category $\abcatname{C}$.
     \begin{enumerate}
    \item 
     We assume that 
     \begin{enumerate}[label=(AZ-\arabic*)]
        \item $\Ocal$ is noetherian,
        \item $\Hom_{\abcatname{C}}(\Ocal, \Ocal)$ is a right noetherian ring and $\Hom_{\abcatname{C}}(\Ocal,\Mcal)$ is a finite $\Hom_{\abcatname{C}}(\Ocal, \Ocal)$-module for all $\Mcal \in \abcatname{C}$,
        \item $s$ is AZ-ample. 
     \end{enumerate}
     Let $B:= B(\abcatname{C}, \Ocal, s)_{\geq 0}$.
     Then, $B$ is a right noetherian $\Nbb$-graded $k$-algebra and we have an isomorphism of algebraic triples
     \[
     (\abcatname{C}, \Ocal, s) \simeq (\qgr (B), \Ocal_B, (1)_B).
     \]
        
    \item Let $A$ be a right noetherian $\Nbb$-graded $k$-algebra satisfying the $\chi$-condition.
    Then, (AZ-1), (AZ-2) and (AZ-3) are satisfied for $(\qgr (A), \Ocal_A, (1)_A)$.
    \end{enumerate}
    \end{thm}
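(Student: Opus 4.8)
The statement is \cite[Theorem 4.5]{artin1994noncommutative}, so the plan is to recall that argument, checking as in the proof of \cref{thm:balanced dualizing complex} that it applies in the present generality. Write $B = B(\abcatname{C},\Ocal,s)_{\geq 0}$, so that $B_0 = \End_{\abcatname{C}}(\Ocal)$ and $B_n = \Hom_{\abcatname{C}}(s^{-n}(\Ocal),\Ocal)$ for $n \geq 0$. The two functors I would use are the \emph{section functor}
\[
\Gamma_* : \abcatname{C} \longrightarrow \Gr(B), \qquad \abobjname{M} \longmapsto \bigoplus_{n \geq 0} \Hom_{\abcatname{C}}(s^{-n}(\Ocal),\abobjname{M}),
\]
which is left exact and $k$-linear, satisfies $\Gamma_*(\Ocal) = B$, and intertwines $s$ with the shift $(1)_B$ (since $\Hom_{\abcatname{C}}(s^{-n}(\Ocal),s(\abobjname{M})) = \Hom_{\abcatname{C}}(s^{-(n+1)}(\Ocal),\abobjname{M})$); and a \emph{sheafification functor} $\widetilde{(-)} : \Gr(B) \to \abcatname{C}$, sending a graded presentation $\bigoplus_j B(-b_j) \to \bigoplus_i B(-a_i) \to M \to 0$ to the cokernel of the induced morphism $\bigoplus_j s^{-b_j}(\Ocal) \to \bigoplus_i s^{-a_i}(\Ocal)$ (the matrix entries lying in $B_{b_j-a_i} = \Hom_{\abcatname{C}}(s^{-b_j}(\Ocal),s^{-a_i}(\Ocal))$), extended to $\Gr(B)$ by filtered colimits. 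One checks $\widetilde{(-)}$ is well defined and left adjoint to $\Gamma_*$.

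For part (1), first I would treat the $\QGr$-level statement. AZ-ampleness condition (a) shows that every object of $\abcatname{C}$ is the cokernel of a morphism between finite sums of twists of $\Ocal$, hence lies in the essential image of $\widetilde{(-)}$; condition (b) shows that the unit $M \to \Gamma_*(\widetilde{M})$ and the counit $\widetilde{\Gamma_*(\abobjname{M})} \to \abobjname{M}$ have right-bounded (hence torsion) kernel and cokernel. From this one deduces that $\widetilde{(-)}$ annihilates $\Tor(B)$, so it descends to a functor $\QGr(B) \to \abcatname{C}$ quasi-inverse to $\pi_B \circ \Gamma_*$; the intertwining relation identifies $\Ocal_B = \pi_B(B)$ with $\Ocal$ and $(1)_B$ with $s$, and restricting to the noetherian/finitely generated subcategories yields the isomorphism of algebraic triples, provided $B$ is right noetherian. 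The right noetherianity of $B$ is where (AZ-1) and (AZ-2) enter: by (AZ-2) each $B_n$ is a finite $B_0$-module and $B_0$ is right noetherian, so every finite truncation $B_{<N}$ is a noetherian $B_0$-module; and by (AZ-1) every subobject of $\widetilde{B} = \Ocal$ is noetherian, which through the adjunction — i.e.\ through the right-bounded kernels and cokernels of the units and counits — forces an ascending chain of graded right ideals of $B$ to stabilize in high degrees, after which the $B_0$-noetherianity handles the bounded part. I expect this last step, reconciling the ascending chain condition in $\Gr(B)$ with that in $\abcatname{C}$ across those right-bounded discrepancies, to be the main obstacle; it is carried out in \cite{artin1994noncommutative}.

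For part (2), take $\abcatname{C} = \qgr (A)$, $\Ocal = \Ocal_A = \pi_A(A)$ and $s = (1)_A$. Condition (AZ-1) holds because $A$ is right noetherian, so $\Ocal_A$ is a noetherian object of $\qgr (A)$. For (AZ-2): via the adjunction $(\pi_A,\omega_A)$ and the identity $\Hom_{\Gr(A)}(A,N) = N_0$ one has $\End_{\qgr (A)}(\Ocal_A) \cong \Q_A(A)_0$ and $\Hom_{\qgr (A)}(\Ocal_A,\pi_A(M)) \cong \Q_A(M)_0$ for $M \in \gr(A)$, and under the $\chi$-condition and local finiteness these are finite-dimensional over $k$ (using the canonical exact sequence \eqref{eq:canonical exact sequence} and its analogue for $M$, together with the right-boundedness of the $\R^i\torfunct{A}$ of finitely generated modules); hence $\End_{\qgr (A)}(\Ocal_A)$ is right noetherian and $\Hom_{\qgr (A)}(\Ocal_A,\pi_A(M))$ is finite over it. For (AZ-3): condition (a) is \cite[Proposition 4.4(1)]{artin1994noncommutative}, valid for any right noetherian $A$, and condition (b) — surjectivity of $\Hom_{\qgr (A)}(\Ocal_A(-n),\abobjname{M}) \to \Hom_{\qgr (A)}(\Ocal_A(-n),\Ncal)$ for $n \gg 0$ whenever $\abobjname{M} \twoheadrightarrow \Ncal$ — follows from $\Ext^1_{\qgr (A)}(\Ocal_A(-n),\Kcal) = 0$ for $n \gg 0$, $\Kcal$ the (noetherian) kernel, i.e.\ from the noncommutative Serre vanishing theorem \cite[Theorem 7.4]{artin1994noncommutative}, which holds under the $\chi$-condition.

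In summary, part (2) is a direct application of the properties of the $\chi$-condition already at hand, while the real content is part (1): that $\Gamma_*$ and $\widetilde{(-)}$ form a mutually quasi-inverse pair after passing to $\QGr(B)$, and that $B$ is right noetherian.
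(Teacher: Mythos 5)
The paper states this result purely as a citation of Artin--Zhang (Theorem 4.5 of \cite{artin1994noncommutative}) and gives no proof of its own, so there is nothing internal to compare against; your outline is a faithful sketch of Artin--Zhang's original argument (the section functor, its left adjoint built from presentations, the torsion kernels/cokernels of the unit and counit supplied by ampleness, and the verification of (AZ-1)--(AZ-3) for $\qgr(A)$ under the $\chi$-condition via \cite[Propositions 3.14 and 4.4, Theorem 7.4]{artin1994noncommutative}). The one substantive step you defer rather than prove --- the right noetherianity of $B$ --- is indeed the delicate part of Artin--Zhang's proof, and deferring it to the cited source is consistent with how the paper itself treats the whole theorem.
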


Now, we can prove Bondal-Orlov's reconstruction theorem in noncommutative projective geometry.
\begin{thm}
    \label{thm:re-main1}
    Let $A,B$ be noetherian locally finite $\Nbb$-graded $k$-algebras.
    We assume that $A,B$ have balanced dualizing complexes.
       We assume that $\qgr (A), \qgr (B)$ have canonical bimodules $\canmod_A, \canmod_B$, respectively.
    
        If $- \otimes \canmod_A, - \otimes \canmod_B$ are AZ-(anti-)ample, then 
        \[
         D^b(\qgr (A)) \simeq D^b(\qgr (B)) \Rightarrow \qgr (A) \simeq \qgr (B).
        \]
    
    \end{thm}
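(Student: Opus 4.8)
The plan is to reduce the statement, via \cref{prop:re-main1}, to an isomorphism of (anti-)canonical graded $k$-algebras, and then to reconstruct the abelian categories $\qgr(A)$ and $\qgr(B)$ from these algebras using the Artin--Zhang characterization \cref{thm:AZ-characterization}. I will describe the argument assuming $-\otimes\canmod_A$ and $-\otimes\canmod_B$ are AZ-ample; the AZ-anti-ample case is obtained by the same argument after replacing $-\otimes\canmod_A$ by the inverse bimodule $\Hom(\canmod_A,-)=(-\otimes\canmod_A)^{-1}$ and $\canring$ by $\anticanring$ throughout, using the second isomorphism supplied by \cref{prop:re-main1}.

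First I would invoke \cref{prop:re-main1}: a triangulated equivalence $D^b(\qgr(A))\simeq D^b(\qgr(B))$ produces an isomorphism of $\Zbb$-graded $k$-algebras $\canring(\qgr(A))\simeq\canring(\qgr(B))$, which restricts to an isomorphism of the $\Nbb$-graded subalgebras of non-negative parts; write $C_A:=\canring(\qgr(A))_{\geq 0}$ and $C_B:=\canring(\qgr(B))_{\geq 0}$, so that $C_A\simeq C_B$ as $\Nbb$-graded $k$-algebras. Next I would check that the algebraic pair $(\Ocal_A,-\otimes\canmod_A)$ for $\qgr(A)$ satisfies hypotheses (AZ-1), (AZ-2), (AZ-3) of \cref{thm:AZ-characterization}(1). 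Condition (AZ-3) is precisely the AZ-ampleness assumption. For (AZ-1) and (AZ-2): since $A$ has a balanced dualizing complex it satisfies the $\chi$-condition by \cref{thm:balanced dualizing complex}, so \cref{thm:AZ-characterization}(2) applies to the standard pair $(\qgr(A),\Ocal_A,(1)_A)$ and yields (AZ-1), (AZ-2), (AZ-3) for it; but (AZ-1) and (AZ-2) are statements about the object $\Ocal_A$ and the ring $\Hom_{\qgr(A)}(\Ocal_A,\Ocal_A)$ only, not about the autoequivalence, so they hold verbatim for $(\Ocal_A,-\otimes\canmod_A)$.

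With this in hand I would apply \cref{thm:AZ-characterization}(1) to $(\qgr(A),\Ocal_A,-\otimes\canmod_A)$. Since $B(\qgr(A),\Ocal_A,-\otimes\canmod_A)_{\geq 0}=C_A$ by \cref{def:canring}, this yields that $C_A$ is a right noetherian $\Nbb$-graded $k$-algebra together with an isomorphism of algebraic triples $(\qgr(A),\Ocal_A,-\otimes\canmod_A)\simeq(\qgr(C_A),\Ocal_{C_A},(1)_{C_A})$; in particular $\qgr(A)\simeq\qgr(C_A)$. Running the same argument for $B$ gives $\qgr(B)\simeq\qgr(C_B)$. Finally, the $\Nbb$-graded isomorphism $C_A\simeq C_B$ induces an equivalence $\qgr(C_A)\simeq\qgr(C_B)$, and composing the equivalences gives
\[
\qgr(A)\simeq\qgr(C_A)\simeq\qgr(C_B)\simeq\qgr(B),
\]
as desired. (One in fact obtains more: the composite equivalence intertwines $-\otimes\canmod_A$ with $-\otimes\canmod_B$ and sends $\Ocal_A$ to $\Ocal_B$.)

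The substantive work is entirely front-loaded into \cref{prop:re-main1} (which in turn rests on the Fourier--Mukai formalism of \cref{sec:FM in ncproj} and on \cref{prop:D-eq and dimension}) and into the Artin--Zhang reconstruction \cref{thm:AZ-characterization}; once these are available the present argument is an assembly. The one place I expect to have to be careful is the passage from the standard polarization $(\Ocal_A,(1)_A)$ to the canonical polarization $(\Ocal_A,-\otimes\canmod_A)$: one must observe that the noetherianity and finiteness conditions (AZ-1)--(AZ-2) depend only on the structure object $\Ocal_A$ and not on which AZ-ample autoequivalence is used, so that they transfer for free, while (AZ-3) is supplied by hypothesis.
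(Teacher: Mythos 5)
Your proposal is correct and follows essentially the same route as the paper: apply \cref{prop:re-main1} to get an isomorphism of (anti-)canonical graded algebras, then use \cref{thm:AZ-characterization} (with the $\chi$-condition supplied via \cref{thm:balanced dualizing complex}) to recover $\qgr(A)$ and $\qgr(B)$ from the non-negative parts of these algebras. The only difference is that you make explicit the (correct) observation that (AZ-1) and (AZ-2) depend only on $\Ocal_A$ and so transfer from the standard polarization to the canonical one, a step the paper leaves implicit.
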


    \begin{proof}
        \label{proof:thm-main1}
        We assume that $- \otimes \canmod_A, - \otimes \canmod_B$ are AZ-anti-ample.
        Then, from \cref{prop:re-main1}, we have
        \[
           \anticanring(\qgr (A))_{\geq 0} \simeq \anticanring(\qgr (B) )_{\geq 0}
        \]
        as graded $k$-algebras.
        Since $A,B$ satisfies the $\chi$-condition, so
        \begin{align*}
           \qgr (A) \simeq \qgr (\anticanring(\qgr (A))_{\geq 0}) \simeq \qgr (\anticanring(\qgr (B))_{\geq 0}) \simeq \qgr (B)
        \end{align*}
        by \cref{thm:AZ-characterization}.
        In the case that $- \otimes \canmod_A, - \otimes \canmod_B$ are AZ-ample, we can prove the theorem in a similar way.
        
    \end{proof}

    In the following, we give an application of the argument of \cref{thm:re-main1} for AS-regular algebras.
    We recall the definitions of AS-regular algebras and AS-Gorenstein algebras below.

    \begin{dfn}[{\cite[Section 8]{artin1994noncommutative}, \cite[Definition 3.1]{minamoto2011structure}}]
        \label{def:AS-regular-Gorenstein}
        A locally finite $\Nbb$-graded $k$-algebra $A$ is called \emph{AS-regular} (resp. \emph{AS-Gorenstein}) over $R=A_0$ if $A$ satisfies the following conditions:
        \begin{enumerate}
            \item  $\gldim(A) = d < \infty$ and $\gldim(R) < \infty$
            
            \noindent(resp. $\injdim_A(A) = \injdim_{A^{\op}}(A) = d < \infty$),
            \item There exists an integer $l$ such that  
            \[
            \Extint^i_{A}(R, A) = \Extint^i_{A^{\op}}(R, A) =
            \begin{cases}
                R(l) & \text{if } i = d, \\
                0 & \text{otherwise},
            \end{cases}
            \]
            where $l$ is called the \emph{Gorenstein parameter} of $A$.
        \end{enumerate}
    \end{dfn}

    We also define a graded module twisted by a graded automorphism of a graded algebra.
    Let $A$ be an $\Nbb$-graded $k$-algebra.
    Let $\sigma$ be a graded automorphism of $A$.
    For a graded $A$-module $M$, we define $M_\sigma \in \Gr(A)$ by $M_\sigma := M$ as a graded $k$-module and the new right $A$-module structure is given by $m \cdot a = m\sigma(a)$ for $m \in M$ and $a \in A$.

    \begin{lem}[{\cite[Corollary 3.14]{minamoto2011structure}}]
        \label{lem:balanced dualizing of AS-regular}
        Let $A$ be a noetherian AS-regular algebra over $R=A_0$ of global dimension $n$ with the Gorenstein parameter $l$.
        Then, $A$ has a balanced dualizing complex given by $A_{\nu}(-l)[n] \in \Gr(A^{\en})$ for some graded automorphism $\nu$ of $A$.
        $\nu$ is called the generalized Nakayama automorphism of $A$.
    \end{lem}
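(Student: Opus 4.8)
The plan is to verify the hypotheses of \cref{thm:balanced dualizing complex} and then to compute $\Rtorfunct{A}(A)$ explicitly from the Artin--Schelter condition.

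First I would check that $A$ admits a balanced dualizing complex at all. Since $A$ is noetherian it satisfies the condition (EF), so by \cref{rmk:EF condition} the functor $\Rtorfunct{A}$ commutes with direct limits and local duality is available; the same holds for $A^{\op}$. The functors $\torfunct{A}$ and $\torfunct{A^{\op}}$ have finite cohomological dimension because $\gldim(A)=d<\infty$ bounds it. The $\chi$- and $\chi^{\op}$-conditions follow from condition (2) of \cref{def:AS-regular-Gorenstein} together with $\gldim(R)<\infty$; this is the substantive existence input and is part of the theory of \cite{minamoto2011structure} (cf.\ also \cite{yekutieli1992dualizing,vandenbergh1997existence}). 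Hence \cref{thm:balanced dualizing complex} applies and $R_A\simeq \Rtorfunct{A}(A)'$.

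Next I would compute $\Rtorfunct{A}(A)\simeq \varinjlim_n \RHomint_A(A/A_{\geq n},A)$. The module $A/A_{\geq n}$ carries the finite filtration by the submodules $A_{\geq j}/A_{\geq n}$, whose successive quotients are the graded components $A_j$ ($0\le j\le n-1$), each a right $R=A_0$-module placed in internal degree $j$ on which $A_{\geq 1}$ acts by zero. Such a module is $A_j\otimes_R (A/A_{\geq 1})$, and since $A/A_{\geq 1}$ is free of rank one as a left $R$-module, tensor--hom adjunction gives
\[
\RHomint_A(A_j,A)\simeq \RHomint_R\bigl(A_j,\RHomint_A(A/A_{\geq 1},A)\bigr).
\]
Condition (2) of \cref{def:AS-regular-Gorenstein} says precisely that $\RHomint_A(A/A_{\geq 1},A)$ is, as a complex, a twist of $R(l)$ placed in cohomological degree $d$; since $\gldim(R)<\infty$, applying $\RHomint_R(A_j,-)$ to it yields a bounded complex. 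Assembling the pieces along the filtration and passing to the limit, everything concentrates in cohomological degree $d$, and the underlying graded right $A$-module of $\R^d\torfunct{A}(A)$ is identified with a copy of $A'$ shifted by $l$.

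Finally I would pin down the $A$-bimodule structure. The left $A$-action on $\R^d\torfunct{A}(A)$ is transported from the left action on $A$ through the identifications above, hence it is the one on $A'(l)$ twisted by some graded automorphism; running the same computation for $A^{\op}$ (which is AS-regular with the same $d$ and $l$) shows the right- and left-module descriptions are compatible, and since all cohomologies are finite on both sides the twist is realised by a single graded algebra automorphism $\nu$ of $A$. Therefore $\Rtorfunct{A}(A)\simeq (A_\nu)'(l)[-d]$ in $D(\Gr(A^{\en}))$, and dualizing (using $(M(n))'=M'(-n)$, $(M[d])'=M'[-d]$ and $(A_\nu)''\simeq A_\nu$, valid as $A$ is locally finite) gives $R_A\simeq A_\nu(-l)[d]=A_\nu(-l)[n]$. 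The main obstacle is this last step: tracking the twisting automorphism carefully through the base-change, filtration and limit identifications and checking that it is a genuine algebra automorphism acting on the correct side — the concentration in cohomological degree $d$ being comparatively routine. For the details one may follow \cite[Corollary 3.14]{minamoto2011structure}.
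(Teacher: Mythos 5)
The paper gives no argument for this lemma at all---it is imported verbatim from \cite[Corollary 3.14]{minamoto2011structure}---so your sketch has to stand on its own, and as written it is essentially the classical \emph{connected}-graded computation; the two steps that are genuinely sensitive to $R=A_0\neq k$ are exactly the ones that break. First, by your own adjunction the $j$-th filtration piece is $\RHomint_R\bigl(A_j,\RHomint_A(R,A)\bigr)\simeq\RHomint_R(A_j,R(l))[-d]$, which is only a \emph{bounded} complex: its cohomology can live in degrees $d,\dots,d+\operatorname{pd}_R(A_j)$, since $A_j$ need not be projective over $R$. Assembling pieces that are not concentrated in a single degree along the filtration, and then passing to the colimit, does not ``concentrate everything in cohomological degree $d$''; the long-exact-sequence argument you are implicitly invoking only works when every piece already sits in degree $d$, which is the case $R=k$. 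Second, even the degree-$d$ cohomology of each piece is the $R$-linear dual $\Homint_R(A_j,R)(l)$, whereas the module you need to produce, $A'(l)$, is built from the $k$-linear (Matlis) duals $\Hom_k(A_j,k)$. These two duals agree only for $R=k$ (or $R$ symmetric), so your pieces do not visibly assemble to ``a copy of $A'$ shifted by $l$''; reconciling $\Homint_R(-,R)$ with $\Hom_k(-,k)$ is precisely where the bimodule content of the Gorenstein condition and the hypothesis $\gldim(A)=d$ must be used, and your argument never does so.

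Beyond that, the two remaining assertions---that $A$ satisfies $\chi$ and $\chi^{\op}$, and that the resulting twist of $A'(l)$ is induced by a single graded \emph{algebra} automorphism $\nu$ acting compatibly on both sides---are stated rather than proved, and they constitute the actual content of Minamoto--Mori's result. So while the overall shape (verify the criterion of \cref{thm:balanced dualizing complex}, compute $\Rtorfunct{A}(A)$, then Matlis-dualize) is the right one and does work when $A$ is connected, the proof as written does not establish the lemma in the stated generality; for $A_0=R$ an arbitrary finite-dimensional algebra of finite global dimension you need the finer structure theory of \cite{minamoto2011structure} (or an argument identifying $R_A$ as a shift of an invertible bimodule using finiteness of $\gldim(A)$ on both sides, and then pinning it down by local duality against $R=A/A_{\geq 1}$) rather than a componentwise count along the filtration.
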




    The application of \cref{thm:re-main1} is the following corollary.

    \begin{cor}
        \label{cor:re-main1}
        Let $A,B$ be noetherian AS-regular algebras over $R=A_0$ and $S=B_0$, respectively.
        Then, 
        \[
             D^b(\qgr (A)) \simeq D^b(\qgr (B)) \Rightarrow \qgr (A) \simeq \qgr (B).
            \]
        \end{cor}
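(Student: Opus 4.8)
The plan is to deduce \cref{cor:re-main1} from \cref{thm:re-main1} after replacing $A$ and $B$ by suitable quasi-Veronese algebras. Since $A$ and $B$ are noetherian AS-regular, \cref{lem:balanced dualizing of AS-regular} shows they have balanced dualizing complexes; because $\gldim A,\gldim B<\infty$ the categories $\qgr(A),\qgr(B)$ have finite global dimension and hence canonical bimodules via the Serre functor described after \cref{thm:balanced dualizing complex} (see also \cref{def:bimodules}); and AS-regular algebras satisfy the $\chi$- and $\chi^{\op}$-conditions. Thus the only hypothesis of \cref{thm:re-main1} still to be arranged is AZ-(anti-)ampleness of $-\otimes\canmod_A$ and $-\otimes\canmod_B$, which may fail for $A,B$ themselves because AS-regular algebras need not be generated in degree $1$.

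To remedy this, use \cref{lem:quasi-veronese}(3) to pick $r,s\in\Nbb$ large enough that $A^{[r]}$ and $B^{[s]}$ are generated in degree $1$ over their degree-$0$ parts. By \cref{lem:quasi-veronese}(1),(2) the algebras $A^{[r]},B^{[s]}$ are noetherian and locally finite, and $\qgr(A^{[r]})\simeq\qgr(A)$, $\qgr(B^{[s]})\simeq\qgr(B)$; in particular $D^b(\qgr(A^{[r]}))\simeq D^b(\qgr(A))$ and $D^b(\qgr(B^{[s]}))\simeq D^b(\qgr(B))$, so the hypothesis $D^b(\qgr(A))\simeq D^b(\qgr(B))$ becomes $D^b(\qgr(A^{[r]}))\simeq D^b(\qgr(B^{[s]}))$, and an equivalence $\qgr(A^{[r]})\simeq\qgr(B^{[s]})$ will yield the desired $\qgr(A)\simeq\qgr(B)$. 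Moreover quasi-Veronese algebras of noetherian AS-regular algebras are again noetherian AS-regular (\cite{mori2013bconstruction}), so $A^{[r]},B^{[s]}$ again have balanced dualizing complexes and, since $\gldim(\qgr(A^{[r]}))=\gldim(\qgr(A))<\infty$ and likewise for $B$, canonical bimodules.

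The heart of the proof is to check that $-\otimes\canmod_{A^{[r]}}$ is AZ-anti-ample (and symmetrically for $B^{[s]}$). By \cref{lem:balanced dualizing of AS-regular} the balanced dualizing complex of $A^{[r]}$ is $(A^{[r]})_{\nu'}(-l')[\gldim A^{[r]}]$ for its Gorenstein parameter $l'$ and generalized Nakayama automorphism $\nu'$, and unwinding the formula for the Serre functor shows that $-\otimes\canmod_{A^{[r]}}$ is the autoequivalence $\pi_{A^{[r]}}(M)\mapsto\pi_{A^{[r]}}(M_{\nu'}(-l'))$ of $\qgr(A^{[r]})$. Since twisting a module by a graded automorphism leaves its isomorphism class as a right module unchanged, the objects that appear in \cref{def:AZ-ample} for the pair $(\Ocal_{A^{[r]}},(-\otimes\canmod_{A^{[r]}})^{-1})$ are, up to isomorphism, the twists $\Ocal_{A^{[r]}}(-nl')$ with $n\geq 0$. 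As $l'\geq 1$ (the Gorenstein parameter of a noetherian AS-regular algebra is positive), condition (a) of \cref{def:AZ-ample} follows from AZ-ampleness of $(\Ocal_{A^{[r]}},(1)_{A^{[r]}})$ (\cref{thm:AZ-characterization}(2), using the $\chi$-condition) together with the fact that, because $A^{[r]}$ is generated in degree $1$, each $\Ocal_{A^{[r]}}(-m)$ is an epimorphic image of a finite direct sum of copies of $\Ocal_{A^{[r]}}(-l'\lceil m/l'\rceil)$; condition (b) follows from the $\chi$-condition and the noncommutative Serre vanishing theorem \cite[Theorem 7.4]{artin1994noncommutative} applied to the twists $(nl')$ with $n\gg 0$. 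Hence $-\otimes\canmod_{A^{[r]}}$ and $-\otimes\canmod_{B^{[s]}}$ are AZ-anti-ample, \cref{thm:re-main1} applies to $A^{[r]},B^{[s]}$, and we obtain $\qgr(A^{[r]})\simeq\qgr(B^{[s]})$, whence $\qgr(A)\simeq\qgr(B)$.

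The main obstacle is precisely this last verification: confirming that the canonical bimodule of the quasi-Veronese algebra is AZ-anti-ample. This is where the passage to a quasi-Veronese algebra generated in degree $1$ and the explicit form of the balanced dualizing complex of an AS-regular algebra (\cref{lem:balanced dualizing of AS-regular}) are both essential, and it is also the reason the whole development of \cref{sec:FM in ncproj} and \cref{sec:BO for ncproj} must be carried out for merely locally finite, rather than connected, $\Nbb$-graded algebras.
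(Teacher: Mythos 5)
Your overall strategy is the paper's: pass to a quasi-Veronese algebra, verify that its canonical bimodule is AZ-anti-ample, and apply \cref{thm:re-main1} together with $\qgr(A^{[r]})\simeq\qgr(A)$. But the key step has a genuine gap, located exactly where you choose $r$. You pick $r$ via \cref{lem:quasi-veronese}(3) so that $A^{[r]}$ is generated in degree $1$, and then assert that quasi-Veronese algebras of noetherian AS-regular algebras are again AS-regular. That assertion is false for general $r$: if $A$ is AS-regular with Gorenstein parameter $\ell$, the modules $\Extint^d_{A^{[r]}}(S_j,A^{[r]})$ for the simples $S_j$ of $A^{[r]}_0$ sit in internal degrees $\lfloor (j-\ell)/r\rfloor$, $0\le j\le r-1$, and these agree (as the Gorenstein condition over $A^{[r]}_0$ requires) only when $r\mid\ell$. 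For instance, a weighted polynomial ring with generator degrees $1,1,3$ has $\ell=5$ and is generated in degree $1$ after taking $r=3$, but $3\nmid 5$. Since your choice of $r$ has no relation to $\ell_A$, \cref{lem:balanced dualizing of AS-regular} need not apply to $A^{[r]}$, the balanced dualizing complex of $A^{[r]}$ is then not of the form $(A^{[r]})_{\nu'}(-l')[\,\cdot\,]$, and your identification of $-\otimes\canmod_{A^{[r]}}$ with ``twist composed with shift by $-l'$'' --- on which the entire AZ-anti-ampleness verification rests --- breaks down.

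The paper's proof resolves this by taking $r=\ell_A$ exactly: by \cite[Corollary 4.4]{mori2021acategorical}, $A^{[\ell_A]}$ is again AS-regular \emph{with Gorenstein parameter $1$}, so its canonical bimodule is $\pi(M)\mapsto\pi(M_{\nu_1}(-1))$ and its inverse is the ordinary degree-shift $(1)$ up to an automorphism twist (harmless on $\Ocal$, since $A_\sigma\cong A$ as right modules). AZ-ampleness of that pair is then precisely \cref{thm:AZ-characterization}(2), with no need for generation in degree $1$: condition (a) of \cref{def:AZ-ample} permits epimorphisms from $\bigoplus_i\Ocal(-l_i)$ with arbitrary $l_i$, so the ``rounding up to multiples of $l'$'' step in your argument --- the only place you actually use generation in degree $1$ --- becomes unnecessary. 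To repair your proof, replace ``$r$ large enough for generation in degree $1$'' by ``$r=\ell_A$'' and invoke the Mori--Ueyama result for AS-regularity of $A^{[\ell_A]}$ with parameter $1$; the rest of your reductions (balanced dualizing complexes, canonical bimodules, transport along $\qgr(A^{[r]})\simeq\qgr(A)$) are fine.
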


    \begin{proof}
        \label{proof:cor-main1}
        If the Gorenstein parameters of $A,B$ are $l_A,l_B$ and the global dimensions of $A,B$ are $n_A,n_B$, then $A^{[l_A]}, B^{[l_B]}$ are AS-regular algebras of dimensions $n_A,n_B$ over $\left(A^{[l_A]}\right)_0, \left(B^{[l_B]}\right)_0$, respectively (\cite[Corollary 4.4]{mori2021acategorical}).
        In addition, the Gorenstein parameters of $A^{[l_A]}, B^{[l_B]}$ are $1$.
        From \cref{lem:balanced dualizing of AS-regular}, we have the balanced dualizing complexes of $A^{[l_A]}, B^{[l_B]}$ given by $A^{[l_A]}_{\nu_1}(-1)[n_A], B^{[l_B]}_{\nu_2}(-1)[n_B]$ for some graded automorphisms $\nu_1, \nu_2$ of $A^{[l_A]}, B^{[l_B]}$, respectively.
        This means that $- \otimes \canmod_{A^{[l_A]}}, - \otimes \canmod_{B^{[l_B]}}$ are AZ-anti-ample.
        Moreover, we have 
        \begin{align*}
            D^b\left(\qgr \left(A^{[l_A]}\right)\right) \simeq D^b(\qgr (A)) \simeq D^b(\qgr (B)) \simeq D^b\left(\qgr \left(B^{[l_B]}\right)\right)
        \end{align*}
        from \cref{lem:quasi-veronese}.
        So, we apply \cref{thm:re-main1} to obtain $\qgr \left(A^{[l_A]}\right) \simeq \qgr \left(B^{[l_B]}\right)$ and then $\qgr (A) \simeq \qgr (B)$.

    \end{proof}

\printbibliography 


\end{document}